 \numberwithin{equation}{section}
 \theoremstyle{plain}
  \newtheorem{thm}{Theorem}
 \newtheorem{prop}{Proposition}[section]
 \newtheorem{lem}[prop]{Lemma}
 \theoremstyle{definition}
 \theoremstyle{remark}
 \newtheorem{remark}[prop]{Remark}
 \let\pa=\partial
 \let\al=\alpha
 \let\b=\beta
 \let\d=\delta
 \let\g=\gamma
 \let\e=\varepsilon
 \let\lam=\lambda
 \let\s=\sigma
 \let\f=\frac
 \let\inf = \infty
 \let \les = \lesssim
  \let \gtr = \gtrsim
 \let\om=\omega
 \let \th = \theta
 \let \vp = \varphi
 \let\G= \Gamma
\let\B = \Big
 \let\D=\Delta
 \let\Om=\Omega
 \let\td = \tilde
 \let\teq \triangleq
 \let\pa=\partial
 \let \bsh = \backslash
 \def\cC{{\mathcal C}}
 \def\cH{{\mathcal H}}
 \def\na{\nabla}
 \def\la{\langle}
 \def\ra{\rangle}
\def\one{\mathbf{1}}
 \newcommand{\beq}{\begin{equation}}
 \newcommand{\eeq}{\end{equation}}
  \newcommand{\bal}{\begin{aligned} }
  \newcommand{\eal}{\end{aligned}}
 \newcommand{\ben}{\begin{eqnarray}}
 \newcommand{\een}{\end{eqnarray}}
 \newcommand{\beno}{\begin{eqnarray*}}
 \newcommand{\eeno}{\end{eqnarray*}}
  \newcommand{\BT}{\mathbb{T}}
  \newcommand{\BZ}{\mathbb{Z}}
 \newcommand{\uu}{\mathbf{u}}
 \newcommand{\R}{\mathbb{R}}
 \newcommand{\supp}{\mathrm{supp}}
 \author{Jiajie Chen}
 \date{ \today}
 \address{Courant Institute, NYU, New York, NY 10012. Emails: jiajie.chen@cims.nyu.edu}
\title[Smoothness of $C^{1,\alpha}$ singularity]{Remarks on the smoothness of the $C^{1,\alpha}$ asymptotically self-similar singularity in the 3D Euler and 2D Boussinesq equations}
\begin{document}
\begin{abstract}
We show that the constructions of $C^{1,\alpha}$ asymptotically self-similar singularities for the 3D Euler equations by Elgindi, and for the 3D Euler equations with large swirl and 2D Boussinesq equations with boundary by Chen-Hou can be extended to construct singularity with velocity $\mathbf{u} \in C^{1,\alpha}$ that is not smooth at only one point. The proof is based on a carefully designed small initial perturbation to the blowup profile, and a BKM-type continuation criterion for the one-point nonsmoothness. We establish the criterion using weighted H\"older estimates with weights vanishing near the singular point. Our results are inspired by the recent work of Cordoba, Martinez-Zoroa and Zheng that it is possible to construct a $C^{1,\alpha}$ singularity for the 3D axisymmetric Euler equations without swirl and with velocity $\mathbf{u} \in C^{\infty}(\mathbb{R}^3 \backslash \{0\})$. 
\end{abstract}

 \maketitle

\section{Introduction}

Whether the 3D incompressible Euler equations can develop a finite time singularity from smooth initial data with finite energy is one of the major open questions in nonlinear partial differential equations and fluid dynamics \cite{constantin2007euler,hou2009blow,majda2002vorticity}. There are two major difficulties in the analysis of 3D Euler 
\begin{equation}\label{euler}
   \om_{t} + \uu \cdot \nabla \om = \om \cdot \nabla \uu,  \quad \uu = \na \times (-\D)^{-1} \om,
\end{equation}
where $\uu$ is the velocity and $\om = \na \times \uu$ is the vorticity vector. The first difficulty is the nonlocal Biot-Savart law $ \uu = \na \times (-\D)^{-1} \om$ due to the incompressible conditions $\na \cdot \uu = 0$. The second difficulty is the competing effect between advection and vortex stretching  \cite{lei2009stabilizing,hou2008dynamic,chen2021regularity}. 

In recent years, there has been substantial progress on singularity formation of the 3D Euler equations. In the remarkable work of Elgindi \cite{elgindi2019finite} (see also \cite{elgindi2019stability}), he established singularity formation of 3D axisymmetric Euler equations 
with $C^{1,\al}$ velocity and without swirl. 
In a joint work with Hou \cite{chen2019finite2}, using several important methods developed in \cite{elgindi2019finite}, we establish finite time blowup of the 2D Boussinesq and the 3D axisymmetric Euler equations with $C^{1,\alpha}$ velocity, large swirl and boundary. 
In these works, the regularity exponent $\al$ plays a crucial role as a small parameter, which is used to weaken the advection and simplified the equations and the nonlocal Biot-Savart law $\uu = \na \times (-\D)^{-1} \om$ significantly. 
Building on the framework developed 
in \cite{chen2019finite,chen2021HL},
in joint works with Hou \cite{ChenHou2023a,ChenHou2023b}, we established 
stable nearly self-similar blowup of 2D Boussinesq equations and 3D axisymmetric Euler equations with boundary from smooth, finite energy initial data in finite time. The results provide a first rigorous proof of the Hou-Luo scenario \cite{luo2014potentially,luo2013potentially-2}. The proof combines self-similar analysis, weighted energy estimates, sharp functional inequalities based on optimal transport, operator splitting, and rigorous numerics with computer assistance. One of the key roles of computer assistance is to construct approximate solutions with rigorous error control, 
which provides critical small parameters for the blowup analysis.

In \cite{cordoba2023finite}, Cordoba, Martinez-Zoroa and Zheng discovered a new mechanism 
to establish finite time blowup of 3D axisymmetric Euler equations with velocity $\uu \in C^{1,\al} \cap C^{\infty}(\R^3 \bsh \{ 0\}) \cap L^2$, small $\al$, and without swirl. The construction is based on infinite countable many disjoint smooth vorticity profiles with amplitude solving ODEs of Riccati type. 
Different profiles only concentrate near the origin, which allows the authors to construct smooth solution away from the origin before the blowup. 
There has been some recent progress on constructing $C^{1,\alpha} \cap C^{\infty}(\R^2 \bsh \{0 \})  $ singularity of 2D Boussinesq equations without the boundary. See the talk by Elgindi \footnote{\url{https://www.youtube.com/watch?v=jEHbhxd1XVY}}. 
There are other results on singularity formation in incompressible fluids  \cite{elgindi2018finite,elgindi2017finite,kryz2016} and blowup of model problems for 3D Euler, e.g. \cite{chen2019finite,chen2021HL,chen2020slightly,chen2021regularity,choi2014on}. We refer to the excellent surveys \cite{kiselev2018,Elg22} and the introduction in the author's PhD thesis \cite{chen2022singularity}. 








Inspired by \cite{cordoba2023finite}, we revisit the construction of $C^{1,\al}$ 
asymptotically self-similar singularity in \cite{elgindi2019finite,chen2019finite2}. The velocity $\uu$ in \cite{elgindi2019finite,chen2019finite2} is smooth away from the symmetry axis but is only $C^{1,\alpha}$ near the axis.
Our motivations are the following. Firstly, self-similar analysis has been used successfully to construct blowup for many important PDEs, including incompressible fluids in the above mentioned works \cite{elgindi2019finite,chen2019finite2,ChenHou2023a,ChenHou2023b} and compressible fluids \cite{merle2022implosion1,merle2022implosion2,buckmaster2022formation}. In this work, we show that the self-similar analysis can be used to construct blowup with $\uu$ in the same regularity class as that in \cite{cordoba2023finite}.
In particular, we study the $C^{1,\al}$ (asymptotically) self-similar singularity for the 2D Boussinesq and 3D axisymmetric Euler equations with nonsmoothness at only one point. 
Secondly, it is believed in \cite{elgindi2019finite} that the nonsmoothness in the angular variable, in particular near the axis, is essential to weaken the advection in the construction  of blowup. We show that it is only essential at the singular point for a self-similar blowup.
Note that for the $C^{1,\al}$ singularity \cite{chen2019finite2}, the nonsmoothness can be completely removed, as proved in \cite{ChenHou2023a,ChenHou2023b}. 
For the Hou-Luo scenario considered in \cite{chen2019finite2,ChenHou2023a,ChenHou2023b}, we provide a pen-paper proof of blowup with one-point nonsmoothness by taking advantage of small $\al$.



\subsection{Main results}\label{sec:result}

Recall the 2D Boussinesq equations in $\R^2_+$ 
\beq\label{eq:bous}
\om_t + \uu \cdot \na \om = \th_x, \quad 
\th_t + \uu \cdot \na \th = 0, \quad \uu = \na^{\perp}(-\D)^{-1} \om ,
\eeq
where we impose the no-flow boundary conditions $v(0, y) = 0$.

The axisymmetric 3D Euler equations read
\beq\label{eq:euler1}
\pa_t (ru^{\th}) + u^r (r u^{\th})_r + u^z (r u^{\th})_z = 0, \quad 
\pa_t \f{\om^{\th}}{r} + u^r ( \f{\om^{\th}}{r} )_r + u^z ( \f{\om^{\th}}{r})_z = \f{1}{r^4} \pa_z( (r u^{\th})^2 ).
\eeq
The radial and axial components of the velocity can be recovered from the Biot-Savart law
\beq\label{eq:euler2}
-(\pa_{rr} + \f{1}{r} \pa_{r} +\pa_{zz}) \td{\psi} + \f{1}{r^2} \td{\psi} = \om^{\th}, 
 \quad  u^r = -\td{\psi}_z, \quad u^z = \td{\psi}_r + \f{1}{r} \td{\psi}  .
\eeq
For 3D Euler in a periodic cylinder $D = \{ r,z \in [0, 1] \times \BT \}, \BT = \R / (2 \BZ)$, we impose a no-flow boundary condition on the boundary $r = 1$: $\td{\psi}(1, z ) = 0$ and a periodic boundary condition in $z$.

Our main results are the following. The first result is based on 
\cite{elgindi2019finite,elgindi2019stability}. 
\begin{thm}\label{thm:euler_3D}

There exists $\al \in (0, 1)$ and initial data $\om_0^{\th} \in C_c^{\al}  \cap C^{\infty}(\R^3 \bsh \{0 \})$ without swirl, such that the unique local solution to the axisymmetric 3D Euler equations \eqref{eq:euler_axi} develops an asymptotically self-similar singularity in finite time $T < +\infty$. Moreover, for $t < T$, the solution has regularity $\uu(t) \in C^{1,\al}  \cap C^{\infty}(\R^3 \bsh \{0 \}) \cap L^2$.

\end{thm}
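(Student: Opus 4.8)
The plan is to take Elgindi's self-similar blowup profile for the 3D axisymmetric Euler equations without swirl as the backbone, and to modify it so that the resulting solution, while still in the $C^{1,\alpha}$ class, is $C^\infty$ away from the origin. Elgindi's construction produces a stable approximately self-similar profile $(\bar\omega^\theta, \bar\psi)$ in suitable $C^{1,\alpha}$-type weighted spaces, whose $C^{1,\alpha}$ (and no better) regularity at the symmetry axis comes from a factor behaving like $r^\alpha$ (equivalently $|\sin\varphi|^\alpha$ in the angular variable) in the leading term of the vorticity. The key observation is that this non-smoothness is forced only in an \emph{angular} sense near the axis, not near a generic point of the axis; away from $r=0$, the profile is already smooth. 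So the first step is to recall the precise functional framework of \cite{elgindi2019finite,elgindi2019stability}: the self-similar variables, the fundamental model/leading-order operator, the weighted Hölder norms controlling the dynamics, and the stability theorem that guarantees convergence to the self-similar profile for initial data in a small neighborhood of it.

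The second step is the construction of the perturbed initial data $\omega_0^\theta$. I would start from (a compactly supported truncation of) Elgindi's self-similar datum and add a carefully designed small perturbation that is smooth away from the origin and makes the full datum lie in $C_c^\alpha \cap C^\infty(\mathbb{R}^3\setminus\{0\})$; crucially the perturbation must be small in the weighted $C^{1,\alpha}$ norm used in the stability theorem, so that Elgindi's stability result still applies and yields an asymptotically self-similar singularity at some finite time $T$. The truncation to compact support (needed for $\mathbf{u}\in L^2$, since the exact self-similar profile has a slowly decaying tail) is itself a perturbation that must be controlled in the same norm — this is routine in the Elgindi framework since the relevant norms are local near the origin where the blowup concentrates, and the far-field correction decays.

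The third and genuinely new step is to propagate the one-point smoothness: one must show that if $\omega_0^\theta \in C^\infty(\mathbb{R}^3\setminus\{0\})$, then $\omega^\theta(t)\in C^\infty(\mathbb{R}^3\setminus\{0\})$ and $\mathbf{u}(t)\in C^{1,\alpha}\cap C^\infty(\mathbb{R}^3\setminus\{0\})\cap L^2$ for all $t<T$. This is exactly the BKM-type continuation criterion for one-point non-smoothness advertised in the abstract. The mechanism: the flow map of $\mathbf{u}$, away from the origin, is a smooth diffeomorphism (since $\mathbf{u}\in C^{1,\alpha}$ controls Lipschitz bounds and the flow cannot reach the origin from a positive distance in finite time before $T$ because transport estimates keep particles away), so smoothness of $\omega^\theta$ is transported along characteristics in any region bounded away from $0$; one needs weighted Hölder estimates with weights vanishing at the origin to bootstrap arbitrarily many derivatives on compact sets $K\Subset \mathbb{R}^3\setminus\{0\}$, using the (locally smoothing, away from the singular support) Biot-Savart law \eqref{eq:euler2} to close each order of the estimate. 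The $L^2$ bound on $\mathbf{u}$ follows from energy conservation for axisymmetric no-swirl Euler together with the compact support of the data and finite-in-time propagation of support.

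The main obstacle I anticipate is making the weighted-Hölder higher-derivative bookkeeping uniform up to time $T$: near the origin the solution is genuinely only $C^{1,\alpha}$ and blowing up, so the constants in the interior Schauder/Biot-Savart estimates degenerate as $\mathrm{dist}(K,0)\to 0$ and as $t\to T$, and one must track how the weight (a power of $r$ or of the self-similar spatial variable) precisely cancels this degeneration order by order. The right way to handle this is to work in self-similar variables, where the blowup is renormalized to an $O(1)$ steady-ish profile, prove the higher-derivative estimates there with the vanishing weight, and then translate back; the fact that $\alpha$ is taken small is what keeps the advection term subdominant and lets the estimates close, exactly as in the original construction. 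I would present the one-point smoothness propagation as a standalone lemma (stated for a general $C^{1,\alpha}$ axisymmetric no-swirl solution whose non-smooth set at $t=0$ is $\{0\}$) and then deduce Theorem 1.1 by combining it with Elgindi's existence/stability theorem applied to the perturbed, compactly supported datum.
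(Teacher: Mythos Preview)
Your three-step outline (Elgindi stability $+$ small smoothing perturbation $+$ BKM-type propagation in a weighted H\"older space) matches the paper's architecture. But Step~2 as you describe it hides the one genuinely new idea, and without it the construction does not close.

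\textbf{The missing mechanism in Step 2.} The non-smoothness of the Elgindi profile $F_* = c^{-1}\Gamma(\beta)\,\dfrac{4\alpha R}{(1+R)^2}$, $\Gamma(\beta)=(\sin\beta\cos^2\beta)^{\alpha/3}$, lives at the axis $\beta\in\{0,\pi/2\}$. The obvious move is to multiply by an angular cutoff removing a thin neighborhood of the axis, producing a perturbation $\widetilde F$ supported there. The obstacle is the stability norm $\mathcal H^k$ (see \eqref{norm:H22},\eqref{wg}): its radial weight $\sim R^{-4}$ forces $|\widetilde F|\lesssim R^{3/2+\varepsilon}$ near $R=0$, but $F_*$ itself only vanishes like $R$. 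So a product-type perturbation $A(R)B(\beta)$ cannot be simultaneously (i) supported where $\Gamma$ is non-smooth and (ii) small in $\mathcal H^k$. The paper's resolution is to let the angular cutoff width depend on $R$: at dyadic scale $R\sim 2^{-i}$ one cuts off $\beta$ within distance $\lambda_i\varepsilon$ of the axis, with $\lambda_i = 2^{-12i/\alpha}$. On this support the angular factor $\Gamma(\beta)\lesssim (\lambda_i\varepsilon)^{\alpha/6}\sim R^2\varepsilon^{\alpha/6}$ contributes the missing two powers of $R$, yielding $|\widetilde F|\lesssim R^3\varepsilon^{\alpha/6}$ and hence smallness in $\mathcal H^k$. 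The same scaling is what forces $\sigma$ large in the weighted space $X_\sigma^{k,\alpha}$: each derivative of the cutoff costs $(\rho\lambda_i\varepsilon)^{-1}\sim \rho^{-1-12/\alpha}$, so one needs $\sigma\geq 14$ (the paper takes $\sigma=14$) to absorb it. Your proposal says ``carefully designed small perturbation'' but does not identify this $R$-dependent angular-cutoff trick, and a reader would not be able to reconstruct it from your description.

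\textbf{Minor corrections to Step 3.} The continuation criterion (Proposition~\ref{prop:euler_R3}) is proved in \emph{physical} variables, not self-similar ones, and holds for any $\alpha\in(0,1)$; smallness of $\alpha$ plays no role in this part. Also, the ``flow map is smooth away from the origin'' heuristic is circular (that is exactly what you are proving); the actual mechanism is a direct Gr\"onwall estimate on $\|\omega\|_{X_\sigma^{k,\alpha}}$, closed via weighted product rules (Lemma~\ref{lem:prod}) and a weighted Biot--Savart estimate (Lemma~\ref{lem:BSlaw}) that bounds $\|\langle x\rangle_\sigma^{k}\nabla^{k+1}\mathbf u\|$ by $\|\omega\|_{X_\sigma^{k,\alpha}}+\|\omega\|_{L^q}+\|\omega\|_{\dot C^\alpha}$. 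Your mention of ``weighted H\"older estimates with weights vanishing at the origin'' is on target; the self-similar/small-$\alpha$ comments should be dropped.
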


The following two results build on the blowup results with boundary \cite{chen2019finite2}.

\begin{thm}\label{thm:bous}

There exists $\al \in (0, 1)$ and initial data 
$\om_0 \in C_c^{\al}  \cap C^{\infty}(\R^2_+ \bsh \{0 \}), \th_0 \in C_c^{1,\al}  \cap C^{\infty}(\R^2_+ \bsh \{0 \})$, such that the unique local solution $(\om, \th)$ to the  Boussinesq equations \eqref{eq:bous} in $\R^2_+$ develops a focusing asymptotically self-similar singularity in finite time $T < +\infty$. Moreover, for $t < T$, the solution has regularity 
$ 
\uu(t), \th(t) \in C^{1,\al}  \cap C^{\infty}(\R^2_+ \bsh \{0 \}) \cap L^2$.

\end{thm}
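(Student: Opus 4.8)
The strategy is to adapt the $C^{1,\alpha}$ blowup construction of Chen--Hou \cite{chen2019finite2} for the 2D Boussinesq equations with boundary, modifying only the initial data so that the singularity is isolated at a single point while preserving the self-similar dynamics. Concretely, I would start from the approximate self-similar profile $(\bar\omega, \bar\theta)$ used in \cite{chen2019finite2}, whose velocity is smooth away from the symmetry axis $\{x=0\}$ and only $C^{1,\alpha}$ near it, and add a carefully designed small perturbation supported away from the singular point so that the resulting $\omega_0$ and $\theta_0$ are smooth on $\R^2_+\setminus\{0\}$ but still $C^\alpha$ (resp.\ $C^{1,\alpha}$) globally. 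The point is that the nonsmoothness in the angular variable near the axis, which \cite{elgindi2019finite,chen2019finite2} use to weaken advection, is only genuinely needed at the blowup point itself; away from $0$ one may smooth the profile without disturbing the leading-order self-similar balance, provided the perturbation is small in the weighted H\"older norm that controls the nonlinear stability argument of \cite{chen2019finite2}. This gives local existence of a solution that blows up in finite time $T<\infty$ via the asymptotically self-similar mechanism of \cite{chen2019finite2}, with $\uu(t),\theta(t)\in C^{1,\alpha}\cap L^2$.

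The second and more delicate part is to show that this solution stays in $C^\infty(\R^2_+\setminus\{0\})$ for all $t<T$, i.e.\ that the one-point nonsmoothness does not spread. For this I would establish a BKM-type continuation criterion for one-point nonsmoothness, phrased in terms of weighted H\"older spaces whose weights vanish at the singular point: roughly, if on a time interval the solution lies in a space like $\{ f : \rho^{k}\, \nabla^k f \in C^{\beta}\}$ for all $k$, with $\rho(x)$ a smooth weight comparable to $|x|$ near $0$, then one can propagate this regularity along the (log-self-similar rescaled) flow. The key analytic input is that the Biot-Savart operator $\uu=\na^\perp(-\Delta)^{-1}\omega$ maps such weighted H\"older spaces to themselves with the correct gain, and that the advective term $\uu\cdot\na$ and the source $\theta_x$ preserve the scale of weights — here the no-flow boundary condition $v(0,y)=0$ is used so that the characteristics do not cross the boundary and the weight $\rho$ stays comparable along trajectories. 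Standard commutator estimates in the weighted norms then close a Gronwall-type inequality on each compact time subinterval of $[0,T)$, upgrading $C^\infty$ smoothness away from the moving singular point (which, in self-similar variables, is the fixed point $0$).

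Finally I would assemble these two ingredients: the nonlinear stability analysis of \cite{chen2019finite2} in the self-similar variable $\xi = x/(T-t)^{\lambda}$ shows the rescaled solution converges to the profile in the weak (H\"older-type) topology while remaining in the smooth-away-from-$0$ class by the continuation criterion, and the self-similar scaling $T-t \mapsto 0$ transfers the $C^\infty(\R^2_+\setminus\{0\})$ regularity of the rescaled solution back to the physical solution away from the origin. The $L^2$ bound on $\uu$ is obtained as in \cite{chen2019finite2} from the compact support and the $C^{1,\alpha}$ control. The \emph{main obstacle} I anticipate is the weighted H\"older continuation criterion near the axis: one must check that the degenerate weight $\rho\sim|x|$ is genuinely propagated by the full nonlinear flow — in particular that the elliptic regularity for \eqref{eq:euler2}-type Biot-Savart laws in these vanishing-weight spaces holds uniformly, and that no logarithmic loss accumulates over the (infinite, in self-similar time) evolution. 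Controlling this requires choosing the H\"older exponent $\beta$ and the weight exponents in balance with the small parameter $\alpha$, exactly as in the author's pen-and-paper argument for the Hou--Luo scenario mentioned in the introduction.
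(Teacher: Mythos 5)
Your overall architecture matches the paper's: (i) perturb the Chen--Hou profile so the data become smooth on $\R^2_+\setminus\{0\}$ while staying small in the stability norm, then invoke the stable blowup theorem; (ii) prove a BKM-type continuation criterion in weighted H\"older spaces with weights vanishing at the origin (this is Proposition \ref{prop:BKM_bous}, and your description of it --- weighted elliptic estimates for the Biot--Savart law, product/commutator rules, use of $u_i|_{x_i=0}=0$ so the weight is controlled along characteristics, Gronwall on compact subintervals of $[0,T)$ --- is essentially what the paper does, except that the paper works in physical rather than self-similar variables). However, there is a genuine gap in part (i): you say the perturbation is ``supported away from the singular point,'' and this cannot work. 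The profile $\bar\Omega \sim \G(\b) R/(1+R)^2$ with $\G(\b)=(\cos\b)^{\al}$ is nonsmooth along the entire ray $\b=\pi/2$, i.e.\ at points arbitrarily close to \emph{and} arbitrarily far from the origin; any correction that renders $\bar\Om+\td\Om_0$ smooth off the origin must modify the angular behavior at every radial scale $R\asymp 2^{-i}$, so its support necessarily accumulates at $0$. The whole difficulty --- and the paper's central construction --- is to do this while keeping the perturbation small in the weighted $\cH^k$ norm, whose weight $\sim R^{-4}$ forces $|\td\Om|\les R^{3/2+\e}$ near $R=0$ even though $\bar\Om=O(R)$ there. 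The paper resolves this with a partition of unity $\Phi_i(R)$ and angular cutoffs of rapidly shrinking width $\lam_i\e$ with $\lam_i=2^{-12i/\al}$, so that the angular vanishing factor $(\cos\b)^{\al}$ of the profile, restricted to $\pi/2-\b\les\lam_i\e$, supplies the extra powers of $R$ (via $\lam_i^{\al/6}\les R^2$). Your proposal contains no mechanism playing this role, and without it the construction step does not close.

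Two further omissions worth flagging. First, the shrinking cutoff widths force the continuation weight to vanish to high order: each derivative of the initial data costs a factor $\rho^{-1}(\lam_i\e)^{-1}\asymp\rho^{-1-12}$, so the paper must take $\s=28$ in $X_\s^{k,\al}$ for the Boussinesq case; your weight $\rho\sim|x|$ (i.e.\ $\s=1$) would not contain the constructed initial data, even though the continuation criterion itself is valid for any $\s\geq1$. Second, you do not address $\th_0$: since $\bar\th$ is recovered by integrating $\bar\eta$ from the axis $x=0$ and $\bar\eta$ is nonsmooth near $\b=\pi/2$, $\bar\th$ is nonsmooth even far from the origin, and the paper must re-define the antiderivative (integrating from $x=\infty$ for small $y$, via the function $f(y)$ in \eqref{eq:pertb_profi}) and verify this is a small perturbation in the $\cH^3(\psi)$ and $\cC^1$ components of the energy. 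This is the most technical step of the proof and is absent from your plan.
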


\begin{thm}\label{thm:euler_bd}
Consider the 3D axisymmetric Euler equations \eqref{eq:euler1}-\eqref{eq:euler2} in the cylinder $D = \{ (r,z) \in [0, 1] \times \BT \}$ and $O = \{ (r, z) = (1,0)\}$.
 Let $\om^{\th}$ be the angular vorticity and $u^{\th}$ be the angular velocity.
There exists $\al \in (0, 1)$ and some initial data $\om_0^{\th} \in C^{\al}(D) \cap C^{\infty}(D \bsh O) , u_0^{\th} \in C^{1, \al}(D) \cap C^{\infty}(D \bsh O), u_0^{\th} \geq 0$ supported in 
$S = \{ (r, z): |(r, z) - (1,0)| < \f{1}{2} \}$ away from the axis $r=0$, such that the unique local solution to \eqref{eq:euler1}-\eqref{eq:euler2} develops a 
singularity in finite time $T$. 
Moreover, for $t < T$, the velocity field in each period has finite energy with regularity 
$\uu \in C^{1,\al} \cap C^{\infty}( S \bsh O) $ and 
$\om^{\th}, u^{\th} \in  C^{\infty}( D  \bsh O)$. 

\end{thm}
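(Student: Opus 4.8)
The plan is to take the $C^{1,\al}$ blowup solution of the Hou--Luo scenario constructed in \cite{chen2019finite2} and perturb its self-similar profile so that the angular vorticity and angular velocity become smooth away from the blowup corner $O=(1,0)$, while remaining in the same weak sense a genuine finite-energy solution that blows up. As in \cite{chen2019finite2}, we work in self-similar variables adapted to the corner $O$: write $(\om^\th,u^\th)$ via rescaled profiles living on a half-space-like domain near $O$, where the leading dynamics reduce (after the standard $\al$-dependent simplification that weakens advection) to a closed approximate system for the profile $(\bar\Om,\bar U)$ with an explicit fixed point. The key point inherited from the earlier construction is that the linearized operator around this approximate profile has a spectral gap / is a stable contraction in a suitable weighted $C^\al$ (or $H^k$-type) space, with smallness supplied by $\al$; this stability is exactly what we reuse, now in a function space that also controls higher derivatives with weights degenerating at $O$.

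First I would fix the reference profile $(\om_*^\th,u_*^\th)\in C^\al\times C^{1,\al}$ from \cite{chen2019finite2}, which is already smooth in $D\bsh O$ away from the axis, then choose the initial data as $(\om_*^\th,u_*^\th)+(\phi,\psi)$ where $(\phi,\psi)$ is a compactly supported, $C^\infty(D\bsh O)$ perturbation of small size in the weighted norm, designed to kill any spurious singular behavior of the reference profile other than at $O$ and to place the data in $C^\al(D)\cap C^\infty(D\bsh O)$, respectively $C^{1,\al}(D)\cap C^\infty(D\bsh O)$, with $u_0^\th\ge 0$ and support in $S$. One must check that the construction of \cite{chen2019finite2} is robust to such a perturbation --- i.e. the nonlinear stability argument (energy/weighted-Hölder estimates) closes in an open neighborhood of the profile --- so that the perturbed solution still develops a singularity at $O$ in finite time $T$, with the asymptotically self-similar rate. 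The no-flow boundary condition $\td\psi(1,z)=0$ and the periodicity in $z$ are preserved because the perturbation is supported in $S$, away from the axis $r=0$ and compatible with the boundary at $r=1$.

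Next I would establish the one-point smoothness $\uu\in C^\infty(S\bsh O)$ and $\om^\th,u^\th\in C^\infty(D\bsh O)$ for all $t<T$ via a BKM-type continuation criterion for one-point nonsmoothness, which (per the abstract) is the conceptual core of the paper: one propagates weighted Hölder norms $\|\cdot\|_{C^{k,\al}_\rho}$, with weight $\rho$ vanishing like a power of the distance to $O$, along the flow. The mechanism is that away from $O$ the vortex-stretching term $\f{1}{r^4}\pa_z((ru^\th)^2)$ and the transport are smoothing-compatible (all coefficients are smooth on $D\bsh O$), and the Biot--Savart operator in \eqref{eq:euler2} maps weighted-$C^{k,\al}$ to weighted-$C^{k+1,\al}$ on sets bounded away from $O$ and from the axis; the weight absorbs the loss caused by the flow map's derivatives becoming singular only at $O$. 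Iterating in $k$ and using that the weighted norms remain finite on $[0,T)$ (because $\int_0^T \|\na\uu\|_{L^\infty(D\bsh B_\delta(O))}\,dt<\infty$ for every $\delta>0$, which follows from the self-similar profile being smooth away from $O$) gives $C^\infty$ regularity on compact subsets of $D\bsh O$ (resp. $S\bsh O$), and the axis regularity $r=0$ is handled as in the standard axisymmetric theory since the support stays away from $r=0$.

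The main obstacle I expect is the weighted-Hölder continuation estimate: one must show that derivatives of the flow map concentrate their growth at $O$ in a way precisely matched by the degenerate weight, so that the commutators $[\,\pa^k,\uu\cdot\na\,]$ and the nonlocal term do not generate uncontrolled growth of $\|\om^\th\|_{C^{k,\al}_\rho}$ on $[0,T)$. This requires quantitative control of the self-similar trajectory near $O$ --- essentially that $\na\uu$ in self-similar variables is bounded, so in physical variables $\|\na\uu(t)\|_{L^\infty}$ blows up like $(T-t)^{-1}$ but remains spatially concentrated at $O$ --- together with a bootstrap on $k$ where the weight power is allowed to grow with $k$. A secondary difficulty is ensuring the perturbation $(\phi,\psi)$ is simultaneously small enough for nonlinear stability, smooth on $D\bsh O$, sign-definite in $u^\th$, and compactly supported in $S$; this is a matter of carefully designing a cutoff-and-mollify construction and verifying it lands in the contraction's basin, but it is technical rather than conceptual.
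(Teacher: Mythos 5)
Your overall strategy matches the paper's: perturb the profile of \cite{chen2019finite2} so the data lies in $C^{\infty}(D\bsh O)$, invoke the stability theorem to get blowup, and propagate one-point smoothness via a BKM-type criterion in H\"older spaces weighted by a power of the distance to $O$ (this is Proposition \ref{prop:euler_bd}, combined with the uniform support control $\supp(\om^{\th}),\supp(u^{\th})\subset S_{max}$ from \cite{chen2019finite2}). However, there is a genuine gap in the step you dismiss as ``technical rather than conceptual'': the construction of a perturbation that is \emph{simultaneously} smooth away from $O$ and small in the singular weighted energy norm of \cite{chen2019finite2}. The profile's angular factor $\G(\b)\sim(\cos\b)^{c\al}(\sin\b)^{c'\al}$ is nonsmooth at $\b=0,\pi/2$ for \emph{every} radius, so the perturbation must modify the profile near these rays all the way down to $R=0$; but the weighted $\cH^k$ norm carries a factor $(1+R)^2/R^2$, so admissible perturbations must vanish like $R^{3/2+\e}$ near $R=0$, whereas the profile itself is only $O(R)$ there. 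A single cutoff-and-mollify in the angular variable (a product-form perturbation $A(R)B(\b)$) therefore cannot work: either it fails to remove the nonsmoothness at small $R$, or it is too large in the energy. The paper resolves this with a multi-scale construction: a dyadic partition of unity $\Phi_i$ in $R$ with angular cutoff widths $\lam_i\e$, $\lam_i=2^{-12i/\al}\asymp R^{12/\al}$, shrinking rapidly as $R\to0$, so that the angular vanishing $\G(\b)\les(\lam_i\e)^{\al/6}$ on the support converts the narrow angular localization into the extra powers of $R$ needed for smallness. This is the central new idea of the construction and is absent from your proposal.

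A second, related omission: each derivative of this multi-scale perturbation costs a factor $(\rho\lam_i\e)^{-1}\asymp\rho^{-1}\e^{-1}\rho^{-12}$, so the data is genuinely very singular at $O$ and one must take the weight exponent $\s$ in $X_{O,\s}^{k,\al}$ large (the paper uses $\s=14$ or $28$) just to make the $k$-th weighted derivative of the \emph{initial data} finite; your continuation argument must then be run in exactly these spaces, with product rules and elliptic (Biot--Savart) estimates proved at every order $k$ for the non-homogeneous weight. Your formulation of the continuation hypothesis as $\int_0^T\|\na\uu\|_{L^{\infty}(D\bsh B_{\d}(O))}\,dt<\infty$ also does not match what is actually available or needed: the paper's criterion uses the global quantities $\int_0^T\|\om^{\th}\|_{\infty}+\|\na u^{\th}\|_{\infty}\,dt$, which are finite for every $t<T$ precisely because the blowup occurs only at $T$; localizing away from $O$ is neither how the estimate closes (the weight, not an excised ball, absorbs the singularity) nor sufficient to control the commutators with the nonlocal Biot--Savart law.
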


In the space of $(r, z) \in [0,1] \times [-\pi, \pi]$, the set $O$ where the velocity is not smooth is just one point. Yet, due to the axisymmetry, in $\R^3$, the set $O$ is a ring rather than just one point. In Theorem \ref{thm:euler_bd}, we only show that the velocity $u^r, u^z$ is smooth in $S \bsh O$ rather than the larger domain $D \bsh O$ since the velocity is only effective in $\supp(\om^{\th}), \supp( u^{\th}) \subset S$ in \eqref{eq:euler1}. Showing 
$\uu$ is also smooth in $D \bsh S$ is most likely a minor technical issue.

In \cite{elgindi2019finite,chen2019finite2}, the low regularity near the symmetry axis are used to weaken the advection term \eqref{euler} and to construct the approximate blowup profile. 
Theorems \ref{thm:euler_3D}-\ref{thm:euler_bd} show that the low regularity in \cite{elgindi2019finite,chen2019finite2} near the symmetry axis and away from the origin is not essential. Intuitively, since the blowup only occurs near the origin (near $O$ in Theorem \ref{thm:euler_bd}), the regularity of the solution away from the origin is less essential.

Another contribution of  our work is a BKM-type continuation criterion for $C^{\infty}(D\bsh O)$ regularity in various settings, which is used to prove Theorems \ref{thm:euler_3D}-\ref{thm:euler_bd} and is interesting by itself. We quantify the order of singularity of the solution $\na^k \uu$ near the singular point $O$. See Propositions \ref{prop:BKM_bous}-\ref{prop:euler_bd}. In \cite{elgindi2019finite,chen2019finite2}, higher order weighted Sobolev stability estimates $\cH^k$ for the blowup solution are established. However, one needs to first choose the regularity exponent $k$ and then construct initial perturbation. See Theorem 2 \cite{elgindi2019stability} or Theorem \ref{thm19:euler3D}. Given a nonzero initial perturbation, the $\cH^k$ stability estimates do not apply for $k$ large enough. 
The continuation criterion for initial data in Theorems \ref{thm:euler_3D}-\ref{thm:euler_bd} are not trivial since the solutions are quite singular near $x=0$. We use weighted H\"older estimates different from \cite{elgindi2019finite,chen2019finite2} with weights vanishing fast enough near $x=0$. 





\vspace{0.1in}
\paragraph{\bf{Ideas of the proof}} 


In the modified polar coordinate $(R, \b), R = \rho^{\al}$, one of the approximate profiles for the vorticity $\om$ has the form $\bar \Om= C \f{R}{ (1 + R)^2} (\cos \b)^{2 \al/3} (\sin \b)^{\al /3} $. 
Near $R=0$, the perturbation $\td \Om$ needs to be in some weighted space, which implies  $ | \td \Om| \les R^{3/2 + \e}  ( \sin \b \cos \b)^{\g} $ near $R = 0$ for $\g = c \al$ with $ 0< c < 1/3$, and $\e > 0$. If we choose $\td \Om$ in a simple form of $A(R) B(\b)$ near $R=0$, which is used in \cite{chen2019finite2}, \cite{elgindi2019stability},
we get $A(R) \les R^{3/2+}$ and $\bar \Om + \td \Om$ is not smooth near $\b =0,  \f{\pi}{2}$ for small $R$. 

Let $\Phi_i$ be a partition of unity with $R \asymp 2^{-i}$ for $R\in \supp(\Phi_i), i \geq 1$ and $\chi \in C_c^{\infty}$ be supported in $s \leq 2$ with $\chi(s)=1, s  \leq 1$. Instead, we choose the angular perturbation depending on $R$
\[
 \td \Om(R, \b)  \teq    \sum_{i \geq 0}  \Phi_i( R) \B(1 - \chi( \f{\b}{\lam_i \e} ) - \chi( \f{\pi/2 - \b}{\lam_i \e}) \B) \bar \Om - \bar \Om
 =  - \sum_{i \geq 0}  \Phi_i( R) \B( \chi( \f{\b}{\lam_i \e} ) + \chi( \f{\pi/2 - \b}{\lam_i \e}) \B) \bar \Om ,
\]
Perturbation $\td \Om$ removes the nonsmooth part of $\bar \Om$ near $\b=0,  \f{\pi}{2}$. 
The radial part $\bar \Om$ in $\td \Om$ only has a vanishing order $O(R)$ near $R =0$. Yet, using the vanishing term $(\cos \b)^{2 \al/3} (\sin \b)^{\al /3}$ of $\bar \Om$ near $\b=0,  \f{\pi}{2}$, restricting $\td \Om$ to $ \b$ or $\f{\pi}{2} -\b$ very small, 
and choosing $ \lam_i^{\al} \asymp R^k$, we can obtain higher vanishing order in $R$ near $R=0$ and show that $\td \Om$ is small in the energy class. 

We remark that a construction of singularity based on summation over infinite many terms has been used in \cite{cordoba2023finite}. Here, we mainly use the summation for partition of unity and imposing constraints on $\b$ for $(R,\b)\in \supp (\td \Om)$.

For each derivative , e.g. $\pa 
= \f{1}{\rho}  ( \al \cos \b D_R - \sin \b \pa_{\b}) $ \eqref{eq:polar0}, of the initial data $\bar \Om + \td \Om$, near $\rho = |x|=0$, we get a large factor $\rho^{-1} (\lam_i \e)^{-1} \asymp \rho^{-1} \e^{-1} \rho^{-k}$ due to $\lam_i^{\al} \asymp R^k = \rho^{\al k}$. To compensate this large factor, we multiply $\na (\bar \Om + \td \Om)$ by the weight $|x|^{\s} = \rho^{\s}$ near $x=0$ for $\s$ large enough. See the weighted H\"older norm $X_{\s}^{n,\al}$ in \eqref{norm:Xk}. 
We will show in Propositions \ref{prop:BKM_bous}-\ref{prop:euler_bd} that for various settings with a domain $D$ and a point $O$, the $ \bigcap_{n\geq 0} X_{\s}^{n,\al} \subset C^{\infty}(D \bsh O)$ regularity can be continued up to the blowup time using high order weighted H\"older estimates.

\vspace{0.1in}
\paragraph{\bf{Organization of the paper}} The rest of the paper is organized as follows. In Section \ref{sec:pertb}, for various settings with a domain $D$ and a point $O$, we construct  initial data with $C^{\infty}(D \bsh O)$ regularity in Theorems \ref{thm:euler_3D}-\ref{thm:euler_bd}. which lead to a finite time singularity. In Section \ref{sec:BKM}, we study the continuation criterion for the $C^{\infty}(D \bsh O)$ regularity.


\section{Construction of the perturbation}\label{sec:pertb}


In this section, we construct initial perturbation to the (approximate) blowup profiles 
in  \cite{chen2019finite2,elgindi2019finite}.

\vspace{0.1in}
\paragraph{\bf{Notations}}
Recall the following notations and weights from \cite{chen2019finite2,elgindi2019finite}
\[
\bal
& 
D_R = R \pa_R , \quad  D_{\b} = \sin(2\b) \pa_{\b}, \quad 
 || f ||_2 = \B( \int f^2 d R d \b \B)^{1/2}.
 \eal
 \]
The main terms for the velocity $\uu$ in $2D$ and $3D$  has the following form
\beq\label{eq:L12}
\bal
  & L_{2D, 12}( f)(z) = \int_z^{\infty}  \int_0^{\pi/2} \f{ f(R,\b) \sin(2\b)}{R} d R d \b ,  \\ 
& L_{3D,12}(f)(z)  = 3 \int_{z}^{\infty} \int_0^{\pi/2} \f{f(R, \b) \sin(\b) \cos^2(\b)}{R} d \b d R.
\eal
 \eeq

Recall the weights from Section 5.3 \cite{chen2019finite2}:
\beq\label{wg}
\bal
\vp_1 & \teq \f{(1+R)^4}{R^4}  \sin(2\b)^{ - \s}, \quad 
\vp_2 \teq \f{(1+R)^4}{R^4}  \sin(2\b)^{ - \g} , \\
\psi_1  & \teq \f{(1+R)^4}{R^4} (  \sin(\b)\cos(\b) )^{-\s},  \quad
\psi_2    \teq \f{(1+R)^4}{R^4}  \sin(\b)^{-\s}  \cos(\b)^{-\g} , \\
\eal
\eeq
where $\s = \f{99}{100},  \g = 1 + \f{\al}{10}$, and the weighted $H^k$ and $C^1$ norms from Section 6.5 
\cite{chen2019finite2} 
\beq\label{norm:H22}
\bal
 || f ||_{\cH^m(\rho)}  & \teq \sum_{ 0\leq k \leq m}   || D^k_R f \rho_1^{1/2} ||_{L^2} 
+ \sum_{   \ i+ j \leq m - 1
} || D^{i}_R D^{j+1}_{\b}  f \rho_2^{1/2}||_{L^2} , \\
 || f ||_{\cC^1} & = || f ||_{\infty} + || \f{1+R}{R} D_R f ||_{\infty}  + || ( 1 + (R \sin(2\b)^{\al})^{-\f{1}{40}} ) D_{\b}f ||_{\infty} .
\eal
\eeq
We simplify $\cH^3(\vp)$ as $\cH^3$. 

Let $\chi \in C_c^{\infty}(\R), \chi:  \R \to [0, 1]$
with $\chi(s) = 1$ for $s \leq 1$ and 
$\chi(s) = 0$ for $s \geq 2$. Denote 
\beq\label{eq:unity}
\chi_{\lam}(s) \teq \chi( s / \lam) , \quad 
\Phi_0 \teq 1 - \chi, \quad \Phi_i(s) \teq 
\chi( 2^{i-1} s) - \chi(2^{i} s),  i \geq 1 .
\eeq

Clearly, $\{ \Phi_0\}$ forms a partition of unity $\sum \Phi_i(R) = 1$ for $R > 0$. 

We have the following embedding for $\cH^k, \cC^1$ and the standard H\"older space from Lemma 
7.11 in the arXiv version of \cite{chen2019finite2} and Lemma A.13 \cite{chen2019finite2}, 
\begin{lem}\label{lem:h_embed}
(a) For $f \in \cH^3$, we have $|| f ||_{\cC^1} \les \al^{-1/2} || f ||_{\cH^3}$. 
(b) 
For $f(x, y)$ odd or even in $x$. 
we have $|| f||_{C^{  \f{\al}{ 40} }(\R^2_{+}) } \les_{\al} || f ||_{\cC^1(\R^2_{++})}$. 
\end{lem}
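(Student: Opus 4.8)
\emph{Proof proposal.} Both parts are (degenerate-)weighted Sobolev embeddings; the plan in each case is to split $\{R>0,\ 0<\b<\pi/2\}$ into a bulk region, where after a logarithmic change of variables $D_R$ and $D_\b$ become flat derivatives against weights $\asymp 1$ and the embeddings are classical, and boundary layers near the axis $\b=0,\pi/2$, where $D_\b=\sin(2\b)\pa_\b$ degenerates. The boundary layers are the only delicate part, and it is there that the factor $\al^{-1/2}$ in (a) and the exponent $\al/40$ in (b) are created.

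\emph{Part (a).} First I would set $\xi=\log R$ and $\tau=\tau(\b)$ with $d\tau=d\b/\sin(2\b)$, so that $\tau\to\mp\infty$ as $\b\to 0^{+},(\pi/2)^{-}$, $D_R=\pa_\xi$, $D_\b=\pa_\tau$, and $dR\,d\b=R\sin(2\b)\,d\xi\,d\tau$; then $\|D_R^iD_\b^j f\,\vp^{1/2}\|_{L^2(dR\,d\b)}^2=\int|\pa_\xi^i\pa_\tau^j f|^2 w\,d\xi\,d\tau$ for an explicit $w$ that is $\asymp 1$ on bounded strips, grows as $|\xi|\to\infty$, and as $|\tau|\to\infty$ behaves like $(\sin 2\b)^{1-\s}=(\sin 2\b)^{1/100}\to 0$ on the $f$- and $D_R^kf$-terms but like $(\sin 2\b)^{1-\g}=(\sin 2\b)^{-\al/10}\to\infty$ on the $D_\b^{j+1}f$-terms. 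Writing $g_1\teq\frac{1+R}{R}D_R f$, $g_2\teq\big(1+(R\sin(2\b)^{\al})^{-1/40}\big)D_\b f$, the product rule shows each $g_m$ and each of $\pa_\xi g_m,\pa_\tau g_m,\pa_\xi\pa_\tau g_m$ is a combination — with coefficients bounded by an absolute constant plus $O(\al)$, whose $R$-dependence is absorbed by $\frac{(1+R)^4}{R^3}\gtrsim 1$ — of quantities $D_R^iD_\b^j f$ appearing in $\|f\|_{\cH^3}$. So it suffices to bound $\|h\|_\infty$ for $h\in\{f,g_1,g_2\}$. In the strip $\{|\tau|\le M\}$ this follows from the anisotropic inequality $\|h\|_\infty^2\les\|h\|_{L^2}^2+\|\pa_\xi h\|_{L^2}^2+\|\pa_\tau h\|_{L^2}^2+\|\pa_\xi\pa_\tau h\|_{L^2}^2$ (a consequence of $h^2=\int\!\!\int\pa_\xi\pa_\tau(h^2)$ and $2|ab|\le a^2+b^2$), with absolute constant since the weights are $\asymp 1$ there. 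In the layer $\{\tau<-M\}$ the weight on $h$ itself degenerates, so I would instead recover $h$ by integrating $\pa_\tau h$ from the bulk: $|h(\xi_0,\tau_0)|\le|h(\xi_0,-M)|+\big(\int_{-\infty}^{-M}v^{-1}\,d\tau\big)^{1/2}G(\xi_0)^{1/2}$, where $v=(\sin 2\b)^{-c\al}$ ($0<c\le 1/10$, chosen according to $h$ so that $G(\xi_0)\teq\int_{-\infty}^{0}|\pa_\tau h(\xi_0,\tau)|^2 v\,d\tau$ is dominated by $\|f\|_{\cH^3}^2$). Since $v^{-1}\sim(2e^{2\tau})^{c\al}$ near $-\infty$, the integral $\int_{-\infty}^{-M}v^{-1}$ is $\les(c\al)^{-1}\les\al^{-1}$, while $\sup_{\xi_0}G(\xi_0)\les\int(|\pa_\xi\pa_\tau h|^2+|\pa_\tau h|^2)v$ by writing $G(\xi_0)=\int_{-\infty}^{\xi_0}\pa_\xi G\,d\xi$; the layer $\{\tau>M\}$ is symmetric. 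Collecting the three regions gives $\|h\|_\infty\les\al^{-1/2}\|f\|_{\cH^3}$, hence (a).

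\emph{Part (b).} I would transfer the $(R,\b)$-bound $\|f\|_{\cC^1}$ to the physical Cartesian variables via $R=\rho^{\al}$, i.e.\ $D_R=\al^{-1}D_\rho$ with $D_\rho=\rho\pa_\rho$. The bound on $\frac{1+R}{R}D_R f$ gives, near $\rho=0$, $|\pa_\rho f|\les\al\,\rho^{\al-1}\|f\|_{\cC^1}$, hence $|f(\rho_1,\b)-f(\rho_0,\b)|\les\|f\|_{\cC^1}(\rho_1^{\al}-\rho_0^{\al})\le\|f\|_{\cC^1}|\rho_1-\rho_0|^{\al}$ — i.e.\ radial H\"older-$\al$ regularity, the $\rho^{\al}$ law at $\rho=0$ being exactly what makes $f$ merely $C^{0,\al}$ there. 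The bound on $\big(1+(R\sin(2\b)^{\al})^{-1/40}\big)D_\b f$ gives $|\pa_\b f|\les R^{1/40}(\sin 2\b)^{\al/40-1}\|f\|_{\cC^1}$; integrating in $\b$ and using subadditivity of $t\mapsto t^{\al/40}$ gives $|f(\rho,\b_1)-f(\rho,\b_0)|\les\al^{-1}R^{1/40}\|f\|_{\cC^1}\,|\b_1-\b_0|^{\al/40}$, and the role of the factor $R^{1/40}=\rho^{\al/40}$ is that, when the angular increment is converted to a Cartesian arclength increment $\asymp\rho\,|\b_1-\b_0|$ near $\b=0,\pi/2$, the powers of $\rho$ cancel and one obtains $|f(P)-f(Q)|\les_\al\|f\|_{\cC^1}|P-Q|^{\al/40}$ for $P,Q$ in one quadrant. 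A two-step polar path, $(\rho_0,\b_0)\to(\rho_1,\b_0)\to(\rho_1,\b_1)$ — with the intermediate point taken on the axis when $P,Q$ lie in different quadrants — combines the radial and angular moduli, using $\al/40\le\al$; and the assumed oddness or evenness of $f$ in $x$ lets one reflect one endpoint across $\{x=0\}$ into a single quadrant (in the odd case using that $f$ vanishes on $\{x=0\}$), upgrading the quadrant bound to $\|f\|_{C^{\al/40}(\R^2_+)}\les_\al\|f\|_{\cC^1(\R^2_{++})}$.

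\emph{Main obstacle.} The one genuine difficulty in both parts is the degeneracy of $D_\b=\sin(2\b)\pa_\b$ and the singularity of the weights at the axis $\b=0,\pi/2$: one has to match the order of vanishing of $\sin 2\b$ against the order of blow-up of the weights and carry out the angular ($\tau$-)integrations losing no more than $\al^{-1/2}$ in (a), respectively preserving the exponent $\al/40$ in (b). Away from the axis the estimates are one-dimensional Sobolev/Morrey inequalities. This is exactly the content of Lemma~7.11 in the arXiv version of \cite{chen2019finite2} and of Lemma~A.13 in \cite{chen2019finite2}, which one may also simply cite.
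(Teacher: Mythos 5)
Your proposal is correct in substance, but it takes a genuinely different route from the paper: the paper does not prove the two embeddings at all — part (a) is quoted verbatim from Lemma 7.11 of the arXiv version of \cite{chen2019finite2}, and for part (b) the paper cites Lemma A.13 of \cite{chen2019finite2} for the quadrant bound $\|f\|_{C^{\al/40}(\R^2_{++})}\les\|f\|_{\cC^1(\R^2_{++})}$ and adds only the two-line reflection/triangle-inequality argument to pass from $\R^2_{++}$ to $\R^2_+$ using the odd or even symmetry in $x$. Your reflection step for (b) coincides exactly with the paper's added content, and your direct proofs of the cited embeddings are sound sketches that correctly locate the source of the two constants: in (a), the $\al^{-1/2}$ comes from $\int(\sin 2\b)^{c\al}\,d\tau\les\al^{-1}$ in the angular boundary layer (note that for all three quantities $f$, $g_1$, $g_2$ the relevant weight on $\pa_\tau h$ is always the $\vp_2$-type weight $(\sin 2\b)^{1-\g}=(\sin 2\b)^{-\al/10}$, so the Cauchy--Schwarz step never meets the degenerate $\vp_1$-weight $(\sin 2\b)^{1/100}$ — worth saying explicitly, since the scheme would fail if one tried to integrate against a weight tending to zero); in (b), the exponent $\al/40$ comes from $\int_{\b_0}^{\b_1}\b^{\al/40-1}d\b\les\al^{-1}|\b_1-\b_0|^{\al/40}$ together with the cancellation $R^{1/40}=\rho^{\al/40}$ against the arclength factor $\rho$. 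What your approach buys is a self-contained proof and a transparent accounting of the constants; what the paper's approach buys is brevity and the guarantee that the constants match those actually used downstream (the $\cH^3\hookrightarrow\cC^1$ constant $\al^{-1/2}$ enters the smallness budget in \eqref{eq:Hk_euler_small} and Step 4 of the proof of Theorem \ref{thm:bous}). Since you acknowledge the citation option at the end, there is no gap to report; if the direct proof were to be written out, the only step needing real care is the verification that $\pa_\xi g_2$, $\pa_\tau g_2$, $\pa_\xi\pa_\tau g_2$ are controlled by exactly the $\cH^3$ terms with $i+j\le 2$, which your derivative count does confirm.
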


In Lemma A.13 \cite{chen2019finite2}, it is proved that $|| f||_{C^{  \f{\al}{ 40} }(\R^2_{++}) } \les || f ||_{\cC^1(\R^2_{++})} $. If $f$ is odd or even in $x$, applying triangle inequality yields  $|| f||_{C^{  \f{\al}{ 40} }(\R^2_+) } \les || f||_{C^{  \f{\al}{ 40} }(\R^2_{++}) }  \les_{\al} || f ||_{\cC^1(\R^2_{++})}$. Result (b) is only used for $\xi$ in Section \ref{sec:bd}.


\subsection{Axisymmetric Euler equations without swirl in $\R^3$}\label{sec:euler3D}

For axisymmetric Euler equations in $\R^3$ without swirl, we rewrite \eqref{eq:euler1} as follows 
\beq\label{eq:euler_axi}
\pa_t \om^{\th} + u^r \pa_r \om^{\th} + u^z \pa_z \om^{\th} = \f{u^r}{r} \om^{\th}.
\eeq
The construction of perturbation is simpler than that of the Boussinesq equations and we use it to illustrate the ideas. 
In  \cite{elgindi2019finite}, the following self-similar blowup profile for $\om^{\theta}$ is constructed 
\beq\label{eq:profile_3D}
\bal
& F = F_* + \al^2 g , \quad || g ||_{\cH^k} \leq C_k, \quad F_* \teq \f{\G_{3D}}{c} \f{4\al R}{
	(1 + R)^2},  \quad 
	\G(\b) \teq ( \sin(\th) \cos^2(\b) )^{\al / 3},   \\
& \rho  = (r^2 +z^2)^{1/2}, \quad R = \rho^{\al}, \quad \b = \arctan (z/ r).
\eal
\eeq
where $R, \b$ are the modified polar coordinate for $(r, z)$, $c \in [ \f{1}{10}, 10]$ is some normalization constant, $\cH^k$ is defined in \eqref{norm:H22}. 

In this section, we consider $\om^{\th}(r, z)$ odd in $z$. Due to symmetry,  we focus on $\b \in [0, \f{\pi}{2} ]$.
Recall $L_{3D,12}, \cH^k = \cH^k(\vp)$ from \eqref{eq:L12},\eqref{norm:H22}. The following results are proved in \cite{elgindi2019finite} and Theorem 2 \cite{elgindi2019stability}. 

\begin{thm}\label{thm19:euler3D}
For $ k \geq 4$, there exists $\al_0 > 0$ and $\d > 0$ such that for initial perturbation 
$\td F_0(r, z)$ odd in $z$ with $|| \td F_0 ||_{\cH^k} < \d_0 \al^{3/2}$, $L_{3D, 12}( \td F_0)(0) = 0$, and initial data $\om_0^{\th} = \td F_0 + F$ with compact support, the unique local solution $\uu \in L^2 \cap C^{1,\al}$ to \eqref{eq:euler_axi} blows up in finite time. 

\end{thm}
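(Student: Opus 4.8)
The statement is Elgindi's stable asymptotically self-similar blowup theorem, and the natural route is the dynamic (modulated) rescaling method. The plan is: (1) pass to the modified polar coordinates $(R,\b)$ with $R=\rho^{\al}$ and rewrite \eqref{eq:euler_axi} as a rescaled evolution with time-dependent normalization constants; (2) check that $F=F_*+\al^2 g$ from \eqref{eq:profile_3D} is an approximate steady state with a residual that is a high power of $\al$ in $\cH^k$; (3) prove a weighted $\cH^k$ coercivity (linear damping) estimate for the linearized operator after removing a single modulation mode; (4) close a nonlinear bootstrap in the ball $\{\|\td F\|_{\cH^k}<\d_0\al^{3/2}\}$; (5) translate the convergence of the rescaled solution into a finite-time singularity of the original equation and verify the regularity class $\uu\in L^2\cap C^{1,\al}$.

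For steps (1)--(2): writing $\om^{\th}(x,t)=C_\om(\tau)\,\Om(R,\b,\tau)$ with a spatially rescaled radius $R$ and a rescaled time $\tau$, equation \eqref{eq:euler_axi} becomes, schematically,
\beq
\pa_\tau\Om = \big(c_\om + \text{l.o.t.}\big)\Om + \big(c_l + \text{l.o.t.}\big)D_R\Om - \mathcal U[\Om]\cdot\na_{R,\b}\Om + \tfrac{u^r}{r}[\Om]\,\Om ,
\eeq
where $\tfrac{u^r}{r}[\Om]$ is recovered from $\Om$ by a Biot--Savart law whose leading part for small $\al$ is the explicit one-dimensional operator $L_{3D,12}(\Om)$ of \eqref{eq:L12}, and the genuinely two-dimensional advection $\mathcal U[\Om]\cdot\na_{R,\b}$ carries a factor $O(\al)$ after normalization. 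Here $F_*$ is the exact solution of the $\al\to0$ limiting profile equation and $\al^2 g$ the next-order corrector, chosen (together with the leading constants $c_l,c_\om$, fixed by normalizing $D_R\Om$ and $L_{3D,12}(\Om)$ near $R=0$) so that the residual $\mathcal F(F)$ of the full profile equation is controlled in $\cH^k$ by a high power of $\al$; this is the content of \eqref{eq:profile_3D}.

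For steps (3)--(4): set $\Om=F+\td F$, so $\pa_\tau\td F=\cL\td F+\mathcal N(\td F)+\mathcal F(F)$ with $\mathcal N$ quadratic. The core estimate is $\langle\cL\td F,\td F\rangle_{\cH^k}\le-\mu\|\td F\|_{\cH^k}^2$ for some $\mu>0$, valid after projecting out the single neutral direction associated with the spatial-scaling modulation --- which is precisely what the constraint $L_{3D,12}(\td F_0)(0)=0$ removes, while the odd-in-$z$ symmetry and smallness of $\al$ rule out other neutral or unstable directions. The damping is extracted from the rescaling/transport term $c_l D_R\td F$: the weights $\vp_1,\vp_2$ of \eqref{wg}, behaving like $R^{-4}(\sin2\b)^{-\s}$ near $R=0$, produce after integration by parts a positive multiple $\sim c_l$ of $\|\td F\|_{\cH^k}^2$ (the degeneracies at $\b=0,\tfrac{\pi}{2}$ being handled by Hardy-type inequalities), and the exponent $4$ is large enough that this dominates the growth term $c_\om\td F$ together with the linearized stretching and the non-leading Biot--Savart contributions, whose bad parts all come with a factor $O(\al)$; the commutators $[D_R^j,\text{transport}]$ are absorbed the same way. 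Then one bootstraps: assuming $\|\td F(\tau)\|_{\cH^k}\le\d_0\al^{3/2}$, Lemma \ref{lem:h_embed} gives $\|\mathcal N(\td F)\|_{\cH^k}\les\al^{-1/2}\|\td F\|_{\cH^k}^2\les\d_0\al\,\|\td F\|_{\cH^k}$, so for $\d_0$ small this is $\le\tfrac{\mu}{2}\|\td F\|_{\cH^k}$ --- this balance is exactly what dictates the threshold $\al^{3/2}$ --- and $\|\mathcal F(F)\|_{\cH^k}\ll\mu\,\d_0\al^{3/2}$; a Gr\"onwall inequality then keeps $\|\td F(\tau)\|_{\cH^k}\le\d_0\al^{3/2}$ for all $\tau\ge0$. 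Finally $C_\om(\tau)\to\infty$ while $\tau\to\infty$ corresponds to a finite physical time $T$ (the time reparametrization is integrable), which gives blowup; compact support plus the $\cH^k$ control give $\uu\in L^2$, and elliptic regularity for \eqref{eq:euler2} applied to a vorticity vanishing like $R=\rho^{\al}$ at the origin gives $\uu\in C^{1,\al}$, within which axisymmetric Euler without swirl is locally well-posed, hence uniqueness.

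The step I expect to be the main obstacle is the weighted $\cH^k$ damping estimate for $\cL$ at each fixed $k\ge4$: the advection and vortex-stretching terms both contain transport-type operators that lose one derivative, and the Biot--Savart law is only exactly solvable to leading order in $\al$, so one must track all top-order commutators, verify that every genuinely dangerous contribution either carries an explicit small factor ($\al$, the size $\al^2$ of the corrector, or a weight degenerating at $R=0,\b=0,\tfrac{\pi}{2}$) or is exactly the modulation mode killed by $L_{3D,12}(\td F_0)(0)=0$, and confirm that the positive sign of the net zeroth-order coefficient survives uniformly on $(R,\b)\in(0,\infty)\times(0,\tfrac{\pi}{2})$ --- all of which hinges on the precise algebraic form of the weights in \eqref{wg} and on $\al\le\al_0$. (In the present paper this theorem is quoted directly from \cite{elgindi2019finite,elgindi2019stability}; the new work is the construction in the following subsections of an admissible perturbation $\td F_0$ that additionally yields $C^{\infty}(\R^3\bsh\{0\})$ regularity.)
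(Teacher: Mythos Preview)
Your proposal is correct and, as you yourself note at the end, this theorem is not proved in the present paper but quoted from \cite{elgindi2019finite,elgindi2019stability}; your outline accurately summarizes the dynamic-rescaling / weighted-$\cH^k$-coercivity / nonlinear-bootstrap scheme of those references, including the role of the constraint $L_{3D,12}(\td F_0)(0)=0$ in removing the scaling mode and the arithmetic $\al^{-1/2}\cdot\al^{3/2}=\al$ that fixes the smallness threshold. There is nothing to compare against here, since the paper offers no independent proof.
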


Using Theorem \ref{thm19:euler3D} and Proposition \ref{prop:euler_R3} for the continuation of $
X_{\s}^{\infty} \subset C^{\infty}(\R^3 \bsh \{0\})$ regularity, to prove Theorem \ref{thm:euler_3D}, we only need to construct $\td F_0$ with small $||\td F_0||_{\cH^k}$ and $F + \td F_0 \in X_{\s}^{\infty}(\R^3) \cap C_c^{\al}$ for some $\s \geq 1$. Below, we will choose $\s = 14.$


\begin{proof}[Proof of Theorem \ref{thm:euler_3D}]
Let $\{ \Phi_i \}_{i\geq 0}$ be the partition of unity defined in \eqref{eq:unity}. 

For $\lam_i, \e \in [0, 1]$ and $ M\geq 1$ to be defined, we consider the following perturbation  
\[
\bal
 \td F_0(R, \b ) & \teq 
 \td F_1(R, \b) + \mu \td F_2(R, \b), \quad \td F_2 \teq  \sin(2 \b) \chi( (R-3)^3 ) ,  \\
 \td F_1(R, \b) & \teq  \chi( R / M)  \sum_{i \geq 0}  
 \Phi_i( R) \B(1 - \chi( \f{\b}{\lam_i \e} ) - \chi( \f{\pi/2 - \b}{\lam_i \e}) \B) 
  F_*
 - F, \quad  \mu = - \f{ L_{3D, 12}( \td F_1) }{   L_{3D, 12}( \td F_2) },  \\
 \eal
 \]
The rank-one correction $\mu \td F_2$ is used to enforce $L_{3D, 12}(\td F_0) = 0$, which is required in Theorem \ref{thm19:euler3D}.


\vspace{0.1in}
\textbf{Smoothness.}
We have $\td F_2 \in C_c^{\infty},\supp(\td F_2) \subset \{ R \in [1,5] \} $. In \eqref{eq:Hk_euler_small}, we will show that $\td F_1 \in \cH^k$. Using the embedding in Lemma \ref{lem:h_embed}, we get $\td F_0 +  F \in C^{ \al /40}$. By definition, we get 
\[
\td F_0 + F = \chi( R / M)  \sum_{i \geq 0}  
 \Phi_i( R) \B(1 - \chi( \f{\b}{\lam_i \e} ) - \chi( \f{\pi/2 - \b}{\lam_i \e}) \B) 
  F_* + \mu  \td F_2
  \teq \sum J_i + \mu \td F_2,
\]
which has compact support. Since $R = \rho^{\al}$ is smooth away from $ \rho= 0$, $\G(\b)$ \eqref{eq:profile_3D} is smooth away from $\b = 0$ and $\b = \pi/2$, we have  $J_i  \in C_c^{\infty}(\R^3)$. Next, we estimate the $X_{\s}^{k, 0}$ norm \eqref{norm:Xk} with $\lam_i$ chosen in \eqref{eq:lami}. Since $J_i \in C_c^{\infty}$. We only need to estimate $J_i$ with $i$ large enough.
Suppose that $i \geq 2, R \in \supp J_i$. Then $R \asymp 2^{-i}, \rho \leq 1$. Note that  
\beq\label{eq:polar0}
\pa_r = \cos \b \pa_{\rho} - \f{\sin \b}{\rho} \pa_{\b}
 = \f{1}{\rho} ( \al \cos \b  D_R - \sin \b \pa_{\b}), \quad 
 \pa_z = \f{1}{ \rho} (\al \sin \b D_R + \cos \b \pa_{\b}), 
\eeq
$|D_R^k \Phi_i(R) |\les_k \one_{R \asymp 2^{-i}}$, and $\b$ is $\lam_i \e$ away from $0, \pi/2$ for $(R, \b) \in \supp(J_i)$.
For each derivative $\pa_r, \pa_z$, we get a factor $ \f{1}{\rho \lam_i \e} $. To apply Theorem \ref{prop:euler_R3} in full 3D, we also need to estimate 
$\pa_{x_l} \om$, $\pa_{x_l} (J_i e_{\th}) , l = 1,2$, where 
\[
x_1 = r \cos \th, \quad x_2 = r \sin \th, \quad 
\om = \om^{\th} e_{\th} = \om^{\th}( -\sin \th, \cos \th, 0 ) .
\] 
Note that
\[
\pa_{x_1}= \cos \th \pa_r - r^{-1} \sin \th \pa_{\th},
\quad \pa_{x_2} =  \sin \th \pa_r + r^{-1} \cos \th \pa_{\th}.
\]
For each derivative $\pa_{x_l} ( J_i e_{\th} )$, since $J_i e_{\th}$ is smooth in $\th$, $ r = \rho \cos \b \gtr \rho \lam_i \e$ \eqref{eq:profile_3D}, in addition to $\pa_r J_i, \pa_z J_i$, we get a factor $ |r^{-1} J_i|  \les (\rho \lam_i \e)^{-1} J_i $. Therefore, for 3D derivatives, we yield 
\beq\label{eq:Xk_est1}
 | \na^k J_i| \les_k \rho^{-k} (\lam_i \e)^{-k},
 \quad | \la x \ra_{\s}^k  \na^k J_i| \les \rho^{\s k - k} (\lam_i \e)^{-k}
 = R^{ (\s-1) k /\al}(\lam_i \e)^{-k}
\eeq
where we have used $\la x \ra_{\s} \leq |x|^{\s}$ \eqref{eq:xwg}.

Since $R \asymp 2^{-i}, \lam_i = 2^{-12 i /\al} \asymp_{\al} R^{ 12 / \al}$ \eqref{eq:lami}, choosing $\s = 14$, we get estimates
\beq\label{eq:Xk_est2}
 |\la x \ra_{\s}^k \na^k J_i| 
\les_{\al, k} R^{ ( (\s-1) / \al - 12/ \al ) k } \e^{-k}
\les_{\al, k} R^{  k/\al } \e^{-k} \les_{\al, k} \e^{-k},
\eeq
uniform in $i$. For each point $(R, \b)$ away from $(0, 0)$, there are only finite many non-zero terms $J_i$. We get 
\[
 || \td F_0 + F ||_{X_{\s}^{k, 0}} \les_{k, \al} \e^{-k},
\]
and obtain $\td F_0 + F \in  X_{\s}^{k, 0} $. Since $k$ is arbitrary, we get 
$\td F_0 + F \in X_{\s}^{\infty}(\R^3) \cap C_c^{\al} \subset C^{\infty}(\R^3 \bsh \{0 \})$.


\vspace{0.1in}
\textbf{Smallness.} We show that for $\e$ small enough and $M$ large enough, $|| \td F_1||_{\cH^k}$ is small. Firstly, since $F = F_* + \al^2 g$, we get 
\[
\td F_1(R, \b)=
\B( (\chi( \f{R}{M}) - 1) F_* - \al^2 g  \B)
- \chi(R/M) \sum_{i \geq 0}  
\Phi_i(R)
 ( \chi( \f{\b}{\lam_i \e} ) + \chi( \f{\pi/2 - \b}{\lam_i \e}) )  F_* \teq I + II
\]

Note that $\td F_2 \in C_c^{\infty}$. It has been shown in \cite{elgindi2019stability}, in particular Section 2.6 \cite{elgindi2019stability}, that 
\[
|| I||_{\cH^k} \les M^{-1} + \al^2 , \quad | L_{3D, 12}(\td F_2) | >  0, \quad  || \td F_2 ||_{\cH^k} \les 1 , \quad 
|L_{3D, 12}(f)| \les || f ||_{\cH^1}. 
\]

Using the formula of $F_*$ \eqref{eq:profile_3D} and direct calculations, we yield
\[
\bal
 & |D_R^l D_{\b}^m F_*| \les_{l, m} F_*, \quad  D_R^l \Phi_i(R) \les_l \one_{R \asymp 2^{-i}} + \one_{l=0} \one_{R \geq 1}, \quad |D_R^l \chi(R/ M)| \les_l \one_{R \leq 2M},  \\
& |D_{\b}^m \chi(  \f{\b}{\lam_i \e } )| \les_m \one_{\b \leq 2 \lam_i \e}.
\quad |D_{\b}^m \chi( \f{ \pi/2-\b}{\lam_i \e } )| \les_m  \one_{ \pi/2-\b \leq 2 \lam_i \e}.
\eal
\]

Denote by $II_i$ the $i-th$ term in the summation in $II$. Using the above estimates, we get 
\beq\label{eq:pertb1}
|D_R^l D_{\b}^m II_i | 
\les_{l, m} F_* 
\one_{R \leq 2M} ( \one_{R \asymp 2^{-i}} + \one_{i=0} \one_{R \geq 1})
( \one_{ \b \leq 2 \lam_i \e} +   \one_{ \pi/2-\b \leq 2 \lam_i \e} )
\eeq

From the definition of $F_*$ \eqref{eq:profile_3D}, within the support in the above right hand side, we get 
\[
 \G(\b) \les (\sin 2\b)^{\al / 6}   \b^{\al/6} (\pi/2 -\b)^{\al/6}
 \les (\sin 2\b)^{\al / 6}  (\lam_i \e)^{\al / 6}.
\]

In order for $  II_i \in \cH^k$ \eqref{norm:H22}, due to the singular weight $ \f{(1+R)^2}{R^2}$, we need $|II_i| \les R^{3/2 + \d}$ 
for some $\d >0$ near $0$. Since $F_* = O(R)$ near $R=0$, we use the angular vanishing order to compensate it. In particular, we choose $\lam_i = 2^{- 12 i / \al}$. Then for $R \asymp 2^{-i} $ or $R \geq 1$ and $i=0$, we get 
\beq\label{eq:lami}
\lam_i = 2^{- 12 i / \al}, \quad 
\lam_i^{\al/6} \les \min(R^2, 1).
\eeq
As a result, we yield 
\beq\label{eq:pertb2}
|D_R^l D_{\b}^m II_i |  \les \f{R}{(1+R)^2}  (\sin(2\b))^{\al / 6} \e^{\al / 6} \min(R^2, 1)
\les \min(R^3, R^{-1}) (\sin(2\b))^{\al / 6} \e^{\al / 6} .
\eeq
Note that for each $(R, \b) \in \R_+ \times [0, \pi / 2]$, there are finite many non-zero terms $II_i$. The above bound applies to $II$. Using the definition of \eqref{norm:H22}, we obtain 
\beq\label{eq:pertb3}
|D_R^l D_{\b}^m II |  \les \min(R^3, R^{-1}) (\sin(2\b))^{\al / 6} \e^{\al / 6} ,
\quad  || II||_{\cH^k} \les \al^{-1/2} \e^{\al / 6},
\eeq
where $ \al^{-1}$ comes from the weighted integral in $\b$ with weights $(\sin 2\b)^{-\g}$ \eqref{norm:H22},\eqref{wg} and  $\int_0^{\pi/2} (\sin 2\b)^{-1 + c \al} d \al \les \al^{-1}$ for some absolute $c>0$.

Combining the above estimates, we get 
\beq\label{eq:Hk_euler_small}
|| \td F_1 ||_{\cH^k} \les M^{-1} + \al^{-1} \e^{\al/6} + \al^2, 
\ |\mu| \les || \td F_1||_{\cH^k} ,  \
|| \td F_0 ||_{\cH^k} \les || \td F_1 || _{\cH^k} + |\mu| \les M^{-1} +\al^{-1} \e^{\al/6} + \al^2.
\eeq

Choosing $ \al < \al_0$ small enough and then $\e$ small enough and $M$ large, we get $|| \td F_0||_{\cH^k} \leq \d_0 \al^{3/2}$ for the absolute constant $\d$ in Theorem \ref{thm19:euler3D}. We conclude the proof.
\end{proof}

\subsection{2D Boussinesq and 3D Euler with large swirl and boundary}\label{sec:bd}


In this section, we prove Theorem \ref{thm:bous}, which is based on  \cite{chen2019finite2}.
 Firstly, we recall some notations for the Boussinesq equations from Section 2.2 in \cite{chen2019finite2} 
\beq\label{eq:polar_2D}
\bal
& r = \sqrt{x^2 + y^2}, \quad \b = \arctan(y / x), \quad R = r^{\al} = (x^2 + y^2)^{\al / 2}, \\
& \Om(R, \b, t) = \om(x, y, t), \quad \eta( R, \b, t) = (\th_x) ( x, y, t), \quad \xi( R, \b, t) = (\th_y)(x, y, t) ,\\
& J(f)(x, y) \teq \f{1}{x} \int_0^x f(z, y) d z, 
\quad  S = (z^2  + y^2)^{\al/2} , \quad \tau = \arctan( y /z) .
\eal
\eeq
We only use notations $S, \tau$ inside the operator $J$.

Recall the following profiles from Section 4.3 and Appendix A.5.2 in the arXiv version of \cite{chen2019finite2}
\beq\label{eq:profile_2D}
\bal
& \bar{\Om}(R, \b) =  \f{\al}{c_*} \G(\b) \f{ 3R }{ (1 + R)^2}, \quad \bar{\eta}(R, \b) =  \f{\al}{c_*} \G(\b) \f{ 6 R }{ (1 + R)^3} , \quad  \bar{\xi} = \bar{\th}_y  =  \int_0^x  \bar{\eta}_y(z, y) dz,  \\
& \G(\b) = (\cos(\b))^{\al}, \quad  c_* =  \f{2}{\pi} \int_0^{\pi/2} \G(\b) \sin(2\b) d\b ,  
\quad \bar{\th}   = 1 + \int_0^x \bar{\eta}(z, y) dz = 1 + x J( \bar \eta) , \\
&   
 \bar{\eta}_y(x, y)  = - \f{ 6  \al}{c}  \cdot  \f{3\al y}{y^2 + x^2} 
 \f{  (x^2 +y^2)^{\al /2}  x^{\al }} {(1+(x^2 +y^2)^{\al/2})^4} 
 = - \f{ 18  \al^2}{c} \f{\sin \b}{r} \f{ R^2 (\cos \b)^{\al} }{(1+R)^4}.
 \eal
\eeq
Note that in \cite{chen2019finite2}, $\bar \th = x J(\bar \eta)$. To obtain $\bar \th^{1/2} \in C^{1,\al}$, we use the above construction in the updated arXiv version of \cite{chen2019finite2}. See \cite{chen2023correction}. We recall the following estimates of the profile $\bar \Om, \bar \eta,\bar \xi$ from Lemmas A.6, A.8 \cite{chen2019finite2}, Propositions A.7, Proposition 7.6, the identities for $J(f)$ from Lemma A.12 \cite{chen2019finite2}.
\begin{lem}\label{lem:profile_2D}
For $R \geq 0, \b \in [0, \pi/2], i+j \leq 5$, we have $\bar \Om, \bar \eta, \bar \xi \in C^{\al/40}$,
\[
 | D_R^i D_{\b}^{j} \bar \Om | \les (\al \sin(\b) )^{l(j)} \bar \Om,
\quad 
  | D_R^i D_{\b}^{j+1} \bar \eta | \les (\al \sin(\b) )^{l(j)}  \bar \eta,  \ l(j) = \one_{j\geq 1}, 
   \quad |D_R^i D_{\b}^j \bar \xi| \les - \bar \xi.
\]

Suppose that $f(0, y) =0$. Recall  $\tau, S$ from \eqref{eq:polar_2D}. We have
\beq\label{eq:Jop}
\bal
D_R J(f)(x, y) & = J( D_S f)(x,y) ,  \\ 
D_{\b} J(f)(x, y) - J( D_{\tau} f)(x,y)  & = -2 \al \sin^2 (\b) \cdot J( D_S f) 
+  2 \al J( \sin^2(\tau)  D_S f) , \\
\eal
\eeq
\end{lem}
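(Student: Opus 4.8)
Every quantity in the statement is given in closed form, so the plan is to reduce the four assertions to explicit computations; the only genuinely nonlocal ingredient is the operator $J$, whose commutators with $D_R$ and $D_\b$ are exactly what \eqref{eq:Jop} records. The computation that drives all the angular estimates is $D_\b(\cos\b)^\al=\sin(2\b)\,\pa_\b(\cos\b)^\al=-2\al\sin^2\b\,(\cos\b)^\al$; from this I would record, by induction on $m$, that $D_\b^m\G=\G\,p_m(\b)$ with $p_0\equiv1$ and, for $m\ge1$, $p_m$ a polynomial in $\al,\sin^2\b,\cos^2\b$ each of whose monomials is divisible by $\al\sin^2\b$, so that $|D_\b^m\G|\les_m\al\sin^2\b\,\G\le\al\sin\b\,\G$ on $[0,\pi/2]$.

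\textbf{Estimates for $\bar\Om$ and $\bar\eta$.} Each factors as $\G(\b)$ times a rational function $h(R)\in\{\tfrac{3R}{(1+R)^2},\tfrac{6R}{(1+R)^3}\}$, and one checks $D_Rh=h\cdot(\text{rational, bounded on }[0,\infty))$, hence by induction $|D_R^ih|\les_ih$. Multiplying the two estimates and using $i+j\le5$ gives $|D_R^iD_\b^j\bar\Om|\les(\al\sin\b)^{\one_{j\ge1}}\bar\Om$ and, by the same argument, $|D_R^iD_\b^{j+1}\bar\eta|\les(\al\sin\b)^{\one_{j\ge1}}\bar\eta$. For Hölder continuity I would pass to Cartesian coordinates: on $\{x\ge0\}$, $\bar\Om$ is a constant times $|x|^\al\,(1+(x^2+y^2)^{\al/2})^{-2}$, a bounded product of $C^\al$ functions, hence in $C^\al$; since $\bar\Om,\bar\eta$ are odd or even in $x$, the triangle inequality extends this across $\{x=0\}$ exactly as in Lemma~\ref{lem:h_embed}(b).

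\textbf{The identities \eqref{eq:Jop} and the estimate for $\bar\xi$.} For \eqref{eq:Jop} I would substitute $z=xs$, turning $J(f)(x,y)=\int_0^1 f(xs,y)\,ds$ into an average over a fixed interval, then differentiate under the integral and re-express the Cartesian derivatives through the $(R,\b)$ and $(S,\tau)$ polar variables: $D_R$ commutes through $J$ cleanly into $D_S$, while $D_\b$ at fixed $R$ additionally rotates the base point, producing the corrections $-2\al\sin^2\b\,J(D_Sf)+2\al J(\sin^2\tau\,D_Sf)$ (the hypothesis $f(0,y)=0$ removes the boundary term at $z=0$). For $\bar\xi$ I would write $\bar\xi=\int_0^x\bar\eta_y\,dz=xJ(\bar\eta_y)$; by \eqref{eq:profile_2D}, in the integration variables $\bar\eta_y=-c_0\,\sin\tau(\cos\tau)^\al(z^2+y^2)^{(2\al-1)/2}(1+S)^{-4}$ with $c_0=\tfrac{18\al^2}{c_*}>0$, so $\bar\eta_y\le0$ on the quadrant and $-\bar\xi=xJ(|\bar\eta_y|)>0$. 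Applying \eqref{eq:Jop} repeatedly, every $D_R,D_\b$ either lands on the outer $x=r\cos\b$ (contributing $\al^{-1}x$, resp.\ $-2x\sin^2\b$) or passes into $J$ and hits $\bar\eta_y$ as $D_S$ or $D_\tau$ with angular weights bounded by $2$; the $\tau$-part $\sin\tau(\cos\tau)^\al$ is reproduced up to bounded factors under $D_\tau$, and $D_S$ on $(z^2+y^2)^{(2\al-1)/2}$ merely multiplies by $\tfrac{2\al-1}{\al}$. After collecting terms and using the cancellation described below, $D_R^iD_\b^j\bar\xi$ becomes a finite sum of terms $x\,J\!\big(q(\b,\tau)(-\bar\eta_y)\big)$ with $q$ bounded on $[0,\pi/2]^2$; since $-\bar\eta_y\ge0$ and $J$ is monotone, each such term is $\les xJ(|\bar\eta_y|)=-\bar\xi$ in absolute value, so $|D_R^iD_\b^j\bar\xi|\les-\bar\xi$. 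The $i,j\le1$ instances together with Lemma~\ref{lem:h_embed}(b) then give $\bar\xi\in C^{\al/40}$.

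\textbf{Main obstacle.} I expect the only real difficulty to be in the $\bar\xi$ estimate. Term by term, a derivative of $\bar\xi$ acquires factors of $\al^{-1}$ from two places --- $D_Rx=\al^{-1}x$, and $D_S$ applied to the radial weight $(z^2+y^2)^{(2\al-1)/2}$ of $\bar\eta_y$, which brings out $\tfrac{2\al-1}{\al}\approx-\al^{-1}$ --- and one has to check that for every $i,j$ these $\al^{-1}$'s cancel (for $i=1,j=0$, already $\al^{-1}+\tfrac{2\al-1}{\al}=2$), and that the integrands surviving the cancellation are genuine bounded multiples of the sign-definite $-\bar\eta_y$, so that passing to $J$ loses nothing and introduces no hidden cancellation in the $z$-integral. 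Once this is in place, the rest is routine bookkeeping over the finitely many explicit terms with $i+j\le5$.
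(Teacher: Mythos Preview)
Your approach is correct and is essentially the direct computation one would expect; the paper itself does not prove this lemma but merely cites it from Lemmas A.6, A.8, A.12 and Propositions A.7, 7.6 of \cite{chen2019finite2}, so there is no ``paper proof'' to compare against beyond noting that your verification is self-contained where the paper defers.

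Two minor points are worth flagging. First, the terms that arise from iterating $D_R,D_\b$ on $\bar\xi=xJ(\bar\eta_y)$ are not quite of the form $xJ\big(q(\b,\tau)(-\bar\eta_y)\big)$: they also carry bounded $S$-dependent multipliers such as products of $\tfrac{S}{1+S}$ generated by $D_S(1+S)^{-4}$. This does not affect the conclusion --- these factors and all their $D_S$-iterates stay bounded uniformly in $\al$, so the sign-definiteness argument via $J$ goes through --- but your stated form of the surviving integrand is slightly too restrictive. Your key cancellation $\al^{-1}+\tfrac{2\al-1}{\al}=2$ is exactly right and does persist inductively, since each $D_R$ always pairs the $\al^{-1}x$ from $D_Rx$ with the $(2-\al^{-1})$ from $D_S$ hitting the radial weight of $\bar\eta_y$, regardless of what bounded $q(\tau)r(S)$ sits alongside.

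Second, your route to $\bar\xi\in C^{\al/40}$ via Lemma~\ref{lem:h_embed}(b) needs a little more than the bare ``$i,j\le1$ instances'': the $\cC^1$ norm \eqref{norm:H22} carries the singular weights $\tfrac{1+R}{R}$ and $(R\sin(2\b)^{\al})^{-1/40}$, so you must also verify that $-\bar\xi$ itself vanishes fast enough at $R=0$ and at $\b=0,\tfrac{\pi}{2}$ to absorb these. This follows readily from the explicit integral (for small $R$, $-\bar\xi\asymp R^2$ up to angular factors), but it is not literally a consequence of $|D_R\bar\xi|,|D_\b\bar\xi|\lesssim-\bar\xi$ alone.
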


Recall the following energies from Section 8.6.1 in \cite{chen2019finite2}
\beq\label{eq:EE}
\bal
E^2(\Om, \eta, \xi) &=   L^2_{2D, 12}(\Om)(0) + \int \Om^2 (R^{-3} + 1) \sin(2\b) +  \eta^2 (R^{-4} + R) \G(\b)^{-1}  d R d \b \\
& + || \Om||^2_{\cH^3} + || \eta||_{\cH^3}+ || \xi  ||^2_{\cH^3(\psi)} 
+ \al || \xi ||_{\cC^1}^2 .\\
\eal
\eeq
where $L_{12,2D}$ is defined in \eqref{eq:L12}. The weighted $L^2$ part,  $\cH^3$ part,  and $\cC^1$ part are first introduced in Section 5.6, Section 6.5, and at the beginning of Section 8 in \cite{chen2019finite2}. The above energies are different but equivalent to the final energy in Section 8.6.1 in \cite{chen2019finite2} up to universal constants independent of $\al$. We drop the parameters for different parts of norms in $E$ to simplify the notations. The following stable blowup results are established in \cite{chen2019finite2}.

\begin{thm}
\label{thm19:bous}
There exists $\al_0 >0$ and $K_* >0$, such that for $0 < \al < \al_0$ and initial perturbation
$ \td \om_0(x,y)$ odd in $x$ ,  $\td \th_0(x,y) $ even in $x$ with 
$E(\td \om_0 , \td \th_{x,0}, \td \th_{y,0} ) < K_* \al^2$, the solution from the initial data $\bar \om + \td \om_0 \in C^{\al}, \td \th_0 +\bar \th_0 \in C^{1,\al}$ to the Boussinesq equation \eqref{eq:bous} develop an focusing asymptotically self-similar singularity in finite time. In particular, we can choose $\td \om_0$ such that the initial velocity $\td \uu_0 + \bar \uu \in L^2$. 

\end{thm}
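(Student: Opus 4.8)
The statement is the stable $C^{1,\alpha}$ blowup theorem for the Boussinesq scenario with boundary, and the proof I would give is the dynamic-rescaling / weighted-energy argument of \cite{chen2019finite2}. \textbf{Step 1 (dynamic rescaling and normalization).} Pass \eqref{eq:bous} to the modified polar coordinates $(R,\b)$ of \eqref{eq:polar_2D} and introduce self-similar variables: write $\om(x,y,t)=C_\om(\tau)\,\Om(\,\cdot/C_l(\tau),\b,\tau)$ and likewise for $\th_x=\eta$, $\th_y=\xi$, with a nonlinear time change $\frac{dt}{d\tau}$ given by the amplitude factor. The rescaled system has the schematic form $\Om_\tau=(c_\om+\dots)\Om-(c_l D_R+\dots)\Om-\uu\cdot\na\Om+\eta$, with companion equations for $\eta,\xi$ obtained by differentiating the transported $\th$-equation, and with $\uu$ recovered from $\Om$ through the $(R,\b)$-form of the Biot--Savart law. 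One imposes two normalization conditions on the rescaled solution at $R=0$, which determine $c_\om(\tau),c_l(\tau)$ as explicit linear functionals of $(\Om,\eta,\xi)$. \textbf{Step 2 (approximate profile).} For small $\al$ the $R$-advection in the rescaled equations carries a prefactor $\al$ and is subleading, while the Biot--Savart law splits as a dominant piece expressible through the one-dimensional operator $L_{2D,12}$ of \eqref{eq:L12} plus an $O(\al)$ remainder; the resulting leading-order steady equation is explicitly integrable, and its solution is exactly $(\bar\Om,\bar\eta,\bar\xi)$ of \eqref{eq:profile_2D}. Lemma \ref{lem:profile_2D} provides the derivative bounds and the $J$-operator commutator identities \eqref{eq:Jop} that quantify how far the profile is from an exact steady state (error $O(\al)$ in the norms used).

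\textbf{Step 3 (linearized coercivity --- the main obstacle).} Write the rescaled solution as $(\bar\Om+\Om,\bar\eta+\eta,\bar\xi+\xi)$. The linearization around the profile consists of the rescaling terms $c_\om\Om-c_lD_R\Om$, transport by the profile's self-similar velocity, the stretching coupling $\Om\mapsto\eta$, and the linearized nonlocal terms. The heart of the proof is to show that in the weighted energy $E$ of \eqref{eq:EE}, built from the weights \eqref{wg}, the linearized operator $\mathcal{L}$ is strictly dissipative, $\langle\mathcal{L}(\Om,\eta,\xi),(\Om,\eta,\xi)\rangle_E\le-\mu E^2$ with $\mu\gtrsim\al$ (or a universal constant). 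This requires: (i) choosing the radial decay $R^{-4}(1+R)^4$ and the angular exponents $\s=\tfrac{99}{100}$, $\g=1+\tfrac{\al}{10}$ in $\vp_1,\vp_2,\psi_1,\psi_2$ so that integrating the transport term by parts against the weight yields a strictly negative contribution --- using that the profile's self-similar velocity field is monotone and transports mass toward large $R$ where the weight decays; (ii) handling the nonlocal terms by noting that the two normalization conditions annihilate their finite-dimensional principal part, so that on the resulting codimension-two subspace the remaining nonlocal operator has weighted-space norm $O(\al)$, controlled by the explicit term $L^2_{2D,12}(\Om)(0)$ in $E^2$; (iii) checking that the $\Om$--$\eta$--$\xi$ couplings are subordinate to the damping via weighted Hardy / $L_{12}$ inequalities and the angular bound $\int_0^{\pi/2}(\sin2\b)^{-1+c\al}\,d\b\lesssim\al^{-1}$, and that the $\cH^3$ and $\al\|\xi\|_{\cC^1}^2$ parts of $E$ close because higher-order commutators only generate lower-order terms and because of the embedding $\cH^3\hookrightarrow\cC^1$ of Lemma \ref{lem:h_embed}(a). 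Together with an $O(\al^3)$ bound for the profile's approximation error and an $O(E^3)$ bound for the genuine nonlinearity, this yields the a priori inequality $\frac{d}{d\tau}E^2\le-\mu E^2+C(\al^3+E^3)$.

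\textbf{Step 4 (bootstrap and blowup).} Local well-posedness in the weighted spaces gives a short-time rescaled solution preserving the odd/even symmetries and the boundary condition $v(0,y)=0$. If $E(\td\om_0,\td\th_{x,0},\td\th_{y,0})<K_*\al^2$ with $K_*$ fixed so that $C\al^3+CK_*^3\al^6<\mu K_*^2\al^4$, a continuity argument propagates $E(\tau)<K_*\al^2$ for all $\tau$. Hence the rescaled solution is global and stays within $O(\al^2)$ of $(\bar\Om,\bar\eta,\bar\xi)$, so $c_\om(\tau),c_l(\tau)$ remain near their profile values --- in particular $c_\om$ bounded below by a positive constant. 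Integrating the time change, $\tau=+\infty$ corresponds to a finite physical time $T$; the amplitude $C_\om\to\infty$ (like a negative power of $T-t$) and the length scale $C_l\to0$ (focusing), giving a focusing asymptotically self-similar singularity. For the finite-energy statement one modifies $\td\om_0$ only far from the origin, where $E$ imposes no constraint, or uses the decay of $\bar\Om$ at large $R$, to arrange $\bar\uu+\td\uu_0\in L^2$. I expect Step 3 to be the genuine obstacle: the simultaneous requirements that the transport term have the right sign against the designed weights, that the post-normalization nonlocal perturbation be $O(\al)$-small, and that the thermal coupling stay subordinate, all with the angular exponents pinned inside the narrow window where every weighted integral converges.
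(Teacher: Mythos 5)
Be aware of how this statement functions in the paper: Theorem \ref{thm19:bous} is not proved here at all — it is quoted verbatim from \cite{chen2019finite2} (the paper says "The following stable blowup results are established in \cite{chen2019finite2}") and is then used as a black box, together with the continuation criterion of Proposition \ref{prop:BKM_bous}, to prove Theorem \ref{thm:bous}. So there is no in-paper proof to compare against; the relevant comparison is with the proof in the cited reference.

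Measured against that reference, your outline does follow the actual route: dynamic rescaling with normalization conditions fixing $c_\om,c_l$, the leading-order profile $(\bar\Om,\bar\eta,\bar\xi)$ obtained by exploiting small $\al$ to weaken advection and reduce the Biot--Savart law to $L_{2D,12}$ plus $O(\al)$ corrections, linear stability in the weighted $L^2$/$\cH^3$/$\cC^1$ energy \eqref{eq:EE} built from the weights \eqref{wg}, and a bootstrap under $E<K_*\al^2$ leading to finite-time focusing blowup, with a far-field modification of $\td\om_0$ to get $\td\uu_0+\bar\uu\in L^2$. However, as a proof it has a genuine gap exactly where you flag it: Step 3 is asserted, not executed. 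The coercivity of the linearized operator in these weighted spaces — the sign of the transport term against $\vp_1,\vp_2,\psi_1,\psi_2$ with the exponents $\s=\tfrac{99}{100}$, $\g=1+\tfrac{\al}{10}$, the control of the nonlocal terms after the normalization conditions (including the role of the $L^2_{2D,12}(\Om)(0)$ term in $E$), the subordination of the $\Om$--$\eta$--$\xi$ coupling, and the closure of the higher-order $\cH^3$ and $\cC^1$ estimates — is precisely the content of the bulk of \cite{chen2019finite2} (on the order of a hundred pages), and none of it is verifiable from the proposal; the claimed differential inequality $\frac{d}{d\tau}E^2\le-\mu E^2+C(\al^3+E^3)$ is plausible but unproven as written. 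So your proposal is an accurate roadmap of the cited proof rather than a proof, and within this paper the correct "proof" of the statement is simply the citation.
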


Using Theorem \ref{thm19:bous} and Proposition \ref{prop:BKM_bous} for the continuation of $
X_{\s}^{\infty} \subset C^{\infty}(\R^2_+ \bsh \{0\})$ regularity, we only need to construct perturbation $\td \om, \td \th$ 
small in the energy class, such that  $\bar \om + \td \om_0 \in C_c^{\al} \cap X_{\s}^{\infty}, \td \th_0 +\bar \th_0 \in C_c^{1,\al}, \na( \td \th_0 +\bar \th_0)  \in X_{\s}^{\infty} $. The construction is more technical since we do not have the explicit formula for $\bar \th$. Moreover, $\bar \th $ is not smooth near $ \b =0, \b = \f{\pi}{2}$, if we modify the angular part of $\bar \th$ near $ \b = 0$ directly, it will lead to a large perturbation. We will choose $\s = 28$.


\begin{proof}[Proof of Theorem \ref{thm:bous}]
We first construct $\td \Om_0$ and modify $\bar \eta$. Then we construct $\td \th_0,\td \eta_0, \td \xi_0$.

\vspace{0.1in}
\paragraph{\bf{Step 1: Construction of $\td \Om_0,\hat \eta_0$} }
Recall the partition of unity $\{ \Phi_i\}_{i \geq 0}$ \eqref{eq:unity} and $\lam_i$  \eqref{eq:lami}. For $M\geq 1, \e < 1$ to be defined, we construct the following perturbation to $\bar \Om$ 
and modification to $\bar \eta$
\[
\bal
  \td \Om_0(R, \b) & \teq  \chi( \f{R}{M})  \sum_{i \geq 0}  
 \Phi_i( R) \B(1  - \chi( \f{\pi/2 - \b}{\lam_i \e}) \B)  \bar \Om
- \bar \Om 
= ( \chi( \f{R}{M})  - 1) \bar \Om - \chi( \f{R}{M}) 
\sum_{i \geq 0}  
 \Phi_i( R)  \chi( \f{\pi/2 - \b}{\lam_i \e})   \bar \Om ,  \\
 \hat \eta(R, \b) 
 & \teq  \sum_{i \geq 0}  
 \Phi_i( R) \B(1  - \chi( \f{\pi/2 - \b}{\lam_i \e}) \B)  \bar \eta
 = \bar \eta - 
 \sum_{i \geq 0}   \Phi_i( R)  \chi( \f{\pi/2 - \b}{\lam_i \e})  \bar \eta .
 \eal
 \] 
 We only modify the angular part near $\b = \f{\pi}{2}$ since $\G(\b)$ \eqref{eq:profile_2D} is smooth near $\b = 0$. We will localize functions related to $\hat \eta$ and construct $\td \th$ later.  Following the smoothness part in the proof of Theorem \ref{thm19:euler3D} in Section \ref{sec:euler3D}, we obtain 
 \beq\label{eq:regu1} 
 \bar \Om + \td \Om_0 \in C_c^{\al} \cap X_{\s}^{\infty} \subset C^{\infty}(\R^2_+ \bsh \{0 \}), \quad 
\hat \eta \in C^{\al} \cap X_{\s}^{\infty} \subset C^{\infty}(\R^2_+ \bsh \{0 \}), \quad \s \geq 14.
 \eeq
Since $ \na^k \hat \eta$ has decay $|x|^{-k-\al}$ and $\la x \ra_{\s}\leq |x|$,
 $ \la x \ra_{\s}^k\na^k \hat \eta $ is bounded for $|x| \geq 1$. Following the estimates \eqref{eq:pertb1}-\eqref{eq:pertb3} and using the estimates for $\bar \eta, \bar \Om$ in Lemma \ref{lem:profile_2D} and
\[
\one_{\pi/2 - \b \leq 2 \lam_i \e} \leq \one_{\pi/2 - \b \leq 2 \e} ( \sin \b), \ 
|D_R^i D_{\b}^j \bar \Om| \les \bar \Om  (\al \sin \b)^{l(j)},  \
|D_R^i D_{\b}^j \bar \eta| \les \bar \eta  (\al \sin \b)^{l(j)},   \   
\]
where $l(j) = \one_{j \geq 1 }$, we yield 
 \beq\label{eq:small1}
 \bal
& | D_R^i D_{\b}^j \td \Om_0| \les 
\one_{R \geq M} R^{-1} \G(\b)  (\al \sin \b)^{l(j)}
+\e^{\al/6} \min(R^3, R^{-1}) (\cos \b)^{5 \al/6}   \sin \b , \\
& | D_R^i D_{\b}^j ( \hat \eta - \bar \eta)| \les 
\min(\bar \eta,
\e^{\al/6} \min(R^3, R^{-2}) (\cos \b)^{5 \al/6}  (\al \sin \b)^{l(j)} ) \one_{\pi/2 - \b \leq 2  \e} .
\eal
 \eeq
 The key point is that the vanishing order in $R$ and $\b$ in the above estimates are higher than those required for boundedness of the energy \eqref{eq:EE}, \eqref{norm:H22}. For $\eta$, we have the faster decay rate $R^{-2}$ since $ \bar \eta \les R^{-2}$ \eqref{eq:profile_2D}. Plugging the above estimates into the energy \eqref{eq:euler1} and using a direct calculation, we obtain that the smallness of $\td \Om_0$
 \beq\label{eq:small2D_om}
 |L_{12}(\td \Om_0)| + 
  || \td \Om_0 ||_{\cH^3}  
   \les \al^{-1} ( M^{-1} + \e^{\al / 6} ), \quad 
 E( \td \Om_0, 0, 0) \les 
\al^{-1} ( M^{-1}+ \e^{\al / 6} ),
 \eeq
 where we get $\al^{-1}$ due to the same reason as that below \eqref{eq:pertb3}.

\vspace{0.1in}
\paragraph{\bf{Step 2: Construction of $\hat \th, \td \eta_0, \td \xi_0$ and smoothness}}
In \eqref{eq:profile_2D}, we recover $\bar \th$ using $\bar \th(0, y) = 1$ and integrating 
$\bar \eta$. Since $\bar \eta$ is not smooth near $0$, $\bar \th$ is not smooth even away from $0$. To overcome it, we want to integrate $\bar \eta$ from $x$ to $\infty$ to recover $\bar \th$. Yet, if we do so for all $\bar \th(x, y)$, it will lead to a large perturbation. Instead, we modify the integration only for small $y$.

For small $ \d \in (0, 1/2) $ and $f(y)$ to be determined, we construct 
\beq\label{eq:pertb_profi}
\hat \th \teq f( y) + \int_0^{x} \hat \eta(z, y) d z,  
 \quad f_y(y) \teq  -\chi( \f{y}{\d}) \int_0^{\infty} \hat \eta_y(z, y) d z, 
 \quad f(y) = 1 - \int_y^{\infty} f_y d y,
\eeq
where $\chi \in C_c^{\infty}$ with $\chi(s) = 1, s\leq 1, \chi(s)=0, s \geq 2$. By definition, we yield 
\beq\label{eq:xi_hat}
\hat \th_x = \hat \eta,  \quad 
\hat \th_y = f_y(y) + \int_0^x \hat \eta_y(z, y) d y
= (1 -  \chi( \f{y}{\d}) ) \int_0^{\infty} \hat \eta_y(z, y) d z
 - \int_x^{\infty} \hat \eta_y(z, y) d z \teq \hat \xi_1 + \hat \xi_2.
\eeq

For $M >1$ large, we localize $\hat \th$ and construct the perturbation 
\beq\label{eq:pertb_th}
\td \th_0 \teq \chi(R/ M) \hat \th - \bar \th, \quad 
\td \eta_0 \teq \pa_x ( \chi(R/ M) \hat \th ) - \bar \eta,
\quad \td \xi_0 \teq \pa_y ( \chi(R/ M) \hat \th ) - \bar \xi,
\eeq
Since $\hat \eta \in C^{\infty}(\R^2_+ \bsh \{0\})$ \eqref{eq:regu1}, 
we get $\hat \th_x, \hat \th_y \in C^{\infty}( \R^2_+ \bsh \{0 \} )$, $\one_{R \leq 2 M} \hat \th \in L^{\infty}$. 
We will justify that $\hat \eta_y$ is integrable in \eqref{eq:xi_hat3}. The above construction provides initial data $\td \th_0 + \bar \th \in C_c$.

Next, we show that $( \td \eta_0 + \bar \eta, \td \xi_0 + \bar \xi) = \na ( \chi(R/ M) \hat \th)
\in X_{\s}^{\infty} $
 for some $\s \geq 1$. We have shown $\hat \eta \in X^{\infty}_{\s}, \s \geq 14$ \eqref{eq:regu1}.
For $\hat \th_y = \hat \xi_1 + \hat \xi_2$ defined above, using the estimates 
\[
| \na^k \hat \eta| \les \la x \ra_{\s_0}^{-k}|| \hat \eta ||_{X_{\s_0}^{k,0}} \les 
   (|x|^{- k\s_0}+ |x|^{-k} ) || \hat \eta ||_{X_{\s_0}^{k,0}}, \
\s_0 = 14 ,
\]
we yield
\[
\bal
&  |\pa_1^{i+1} \pa_2^j \hat \xi_2 | 
=  |\pa_1^i \pa_2^{j+1} \hat \eta|  
\les \la x \ra_{\s_0}^{-i-j-1} || \hat \eta||_{X_{\s}^{i+j+1, 0}}, \quad i, j \geq 0, \\
&   | \pa_2^j \hat \xi_2 |  
 \leq \int_{x_1}^{\infty} \la (z, x_2) \ra_{\s}^{-j - 1} d z \cdot || \hat \eta ||_{X_{\s_0}^{j+1,0}} 
 \les \B(  |x| \cdot \la x \ra_{\s}^{- (j+1) }  
 + \int_{|x|}^{\infty} |(z, x_2)|^{-(j+1) \s_0} +|(z, x_2) |^{-j-1} d z \B) || \hat \eta ||_{X_{\s_0}^{j+1,0}} \\ 
 & \qquad \les  ( |x|^{ 1 - (j+1) \s_0 }  + |x|^{-j} ) || \hat \eta ||_{X_{\s_0}^{j+1,0}} 
 \les ( |x|^{- 2 j \s_0} + |x|^{-j } ) || \hat \eta ||_{X_{\s_0}^{j+1,0}} ,  \ j \geq 1  .
 \eal
\]

We will only apply the second estimate for $j \geq 1$. For $\hat \xi_1$ \eqref{eq:xi_hat}, it is supported in $y \geq \d$ and depends on $y$. We yield 
\[
 |\pa_2^j \hat \xi_1| 
 \les_{j, \d} \one_{y \geq \d} \int_0^{\infty}   |(z, y) |^{-(j+1) \s_0}  d z  || \hat \eta ||_{X_{\s_0}^{j+1,0}} 
 \les_{j, \d} \one_{y \geq \d}  y^{-(j+1 ) \s_0 + 1}  || \hat \eta ||_{X_{\s_0}^{j+1,0}} 
 \les_{j, \d} \d^{-(j+1) \s_0 +1} || \hat \eta ||_{X_{\s_0}^{j+1,0}} .
\]

To estimate $\hat \th_y$, we use the first formula in \eqref{eq:xi_hat} and \eqref{eq:xi_hat3} with $k = 0$ and \eqref{eq:pertb_f} to obtain $ |\hat \th_y| \les_{\e, \al} |y|^{2\al} $. Using the localization $\chi(R/ M)$ and the above estimates, we obtain 
\[
| |\la x \ra_{2 \s_0}^{k} \na^k \hat \th_y| \one_{|x| \leq 2M}  =
 |\la x \ra_{2 \s_0}^{k} \na^k (\hat \xi_1 + \hat \xi_2) | \one_{|x| \leq 2M} \les_{M, \d, \e} 
|| \hat \eta ||_{X_{\s_0}^{k+1,0}}  + 1
 , 
 \quad k \geq 0.
\]
Since $\hat \th \one_{|x| \leq 2M}$ is bounded, using Leibniz's rule and \eqref{eq:regu1}, we prove 
\[
\td \th_0 + \bar \th \in C_c^0, \quad 
  \td \eta_0 + \bar \eta,  \ \td \xi_0 + \bar \xi \in C_c \cap X_{2\s_0}^{\infty}  \subset C_c^{\infty}(\R^2_+ \bsh \{0\})  , 
\]

 In step 4, we will show that for $\d$ small enough and $M$ large enough, the perturbations $\td \eta_0, \td \xi_0$ are small in the energy \eqref{eq:EE}. 
 Using embedding in Lemma \ref{lem:h_embed} and Lemma \ref{lem:profile_2D}, we further obtain $\td \eta_0, \td \xi_0, \td \eta_0 + \bar \eta, \td \xi_0 + \bar \xi \in C^{\al / 40} $.




\vspace{0.1in}
\paragraph{\bf{Step 3: Estimate of $f$}} Firstly, we estimate $(y\pa_y)^k \bar \eta_y$
and then $(y\pa_y)^k \pa_y (\bar \eta-\hat \eta)$.


We have the following formula for the derivatives in polar coordinate
\beq\label{eq:DR}
 r \pa_r = \al R \pa_R = \al D_R, \quad \pa_y = \sin(\b) \pa_r + \f{\cos \b}{r} \pa_{\b}
= \sin(\b) \f{\al D_R}{r} + \f{ 1 }{ 2 \sin(\b) r} D_{\b}
\eeq

 Using the formula of $\bar \eta_y$ \eqref{eq:profile_2D}, $ y \pa_y = \sin^2(\b) \cdot \al D_R + \f{1}{2} D_{\b}$ \eqref{eq:DR} and a direct computation, we obtain 
\[
|D^k_{R} R| \les_k R,  \ D_r^k r^{-i} \les_{k, i} r^{-i},   \
|D^k_{\b} q(\b)| \les_k q(\b), \ q = (\cos \b)^{\al}, \sin \b , \cos \b,  \ 
|(y\pa_y)^k  \bar \eta_y| 
\les_k |\bar \eta_y| = - \bar \eta_y, 
\]

Using \eqref{eq:profile_2D} and the above estimates, we get 
\[
\bal
& |\bar \eta_y(z, y)| \les  \f{ \al^2 y}{y^2 + z^2} (z^2 + y^2)^{\al/2} z^{\al},  \quad 
   \int_0^{\infty} |(y\pa_y)^k \bar \eta_y(z, y)|  d z 
 \les_k  \int_0^{\infty}\f{ \al^2 y}{y^2 + z^2} (z^2 + y^2)^{\al/2} z^{\al} d z  .
 \eal
\]
Using a change of variable $ z = y s$, we further yield 
\beq\label{eq:small2}
\int_0^{\infty} |(y\pa_y)^k \bar \eta_y(z, y)|  d z  \les \al^2 y^{2 \al }
\int_0^{\infty} (1+ s^2)^{-1 + \al / 2} s^{\al} d s \les \al^2 y^{2\al}.
\eeq
for $\al \leq \f{1}{10}$. Note that we will choose $\al$ to be very small. 

Using  \eqref{eq:DR} and then \eqref{eq:small1} for $ \bar \eta-\hat \eta$, we get 
\beq\label{eq:Jop_eta1}
|(y \pa_y)^k \pa_y (\bar \eta - \hat \eta) |
\les r^{-1} \sum_{ i+ j \leq k+1} |D_R^i D_{\b}^j (\bar \eta - \hat \eta)|
\les_k r^{-1} \cdot  \e^{\al/6}  \min(R^3, R^{-2}) \one_{\pi/2 -\b \leq 2\e} 
\teq P. 
\eeq

By choosing small $\e$,  if $\tan(\b) = y / x$ with $\pi/2-\b \leq 2\e$, we get $x / y = \tan(\pi/2-\b) \leq 2(\pi/2-\b) \leq 4 \e < \f{1}{2}$. It follows $(x^2 + y^2)^{1/2} \asymp y$, and
\beq\label{eq:Jop_eta2}
P \les \e^{\al/6} \f{1}{y} \cdot
y^{3\al}  \one_{x \leq 4 \e y} , \quad  \int_0^{\infty} |(y\pa_y)^k\pa_y( \bar \eta - \hat \eta)(z, y)| d z
 \les_k \e^{\al/6} \int_0^{4\e y} y^{3\al - 1} d z
 \les_k \e y^{3\al}.
\eeq

Combining \eqref{eq:small2} and the above estimate, for $y \leq 1$, we obtain 
\beq\label{eq:xi_hat3}
\int_0^{\infty} |(y\pa_y)^k\pa_y  \hat \eta(z, y)| d z \leq C_1 (\al^2 + \e) y^{2\al}, 
\quad k \leq 5, 
\eeq
for some universal constant $C_1>0$. Recall the formula of $f, f_y$ \eqref{eq:pertb_profi}. Since 
\[
|(y\pa_y)^k \chi( y/ \d) | \les \one_{y \leq 2\d}, \quad \chi(y/\d) \leq \one_{y \leq 2\d},
\]
we choose $ \d < \min(\f{1}{4}, \f{1}{4C_1}) $ so that 
\beq\label{eq:pertb_f}
\bal
& |(y\pa_y)^k f_y(y)| \les   \one_{ y < 2 \d }  ( \al^2 + \e) y^{2\al}, \  k \leq 5, \quad 
| (y \pa_y)^k (f(y) - 1) | \les |y| \one_{ y < 2 \d }  ( \al^2 + \e) y^{2\al}, 1 \leq k \leq 5, \\
& |f_y(y)| \leq  C_1 \one_{ y < 2 \d }  ( \al^2 + \e) y^{2\al}, \quad |f(y) - 1| \leq (\al^2 + \e) 2 \d C_1 \one_{y \leq 2 \d} < (\al^2 + \e ) \one_{y \leq 2\d}.
\eal
\eeq
For $\al, \e$ small, since $\hat \eta > 0$, from \eqref{eq:pertb_profi}, we get
\[
\hat \th(x, y) \geq  \th(0, y) = f(y) \geq \f{1}{2} > 0.
\]

\vspace{0.1in}
\paragraph{\bf{Step 4: Smallness}}
In Lemma A.11 \cite{chen2019finite2}, the truncation on $\bar \th$ is proved to be small
\beq\label{eq:small2D_th0}
\bal
& \lim_{M \to \infty} E( 0, \td \eta_1, 0 ) + || \td \xi_1 ||_{\cH^3(\psi)} = 0, 
\quad \limsup_{M\to \infty} || \td \eta_1 ||_{\cC^1} \leq C \al^2,  \\
& \td \eta_1 \teq \pa_x( (\chi(R/M) - 1 ) \bar \th ), \quad  \td \xi_1 \teq \pa_y( (\chi(R/M) - 1 ) \bar \th ).
\eal
\eeq

We decompose $\td \eta_0$ \eqref{eq:pertb_th}  as follows 
\[
\td \eta_0 = 
 \pa_x(  (\chi(R/M) - 1) \bar \th)+ \pa_x( \chi(R/ M ) (\hat \th - \bar \th) ) 
\teq \td \eta_1 + \td \eta_2,
\]
and perform similar decomposition for $\td \xi_0$. It remains to estimate $\td \eta_2, \td \xi_2$. For $\td \eta_2, \td \xi_2$, we get 
\beq\label{eq:pertb_decom}
\bal
\td \eta_2 = \chi(R/ M) \pa_x(\hat \th - \bar \th) 
+ \f{1}{r} \cos (\b ) \f{R}{M}  \chi^{\prime}( \f{R}{M}) (\hat \th - \bar \th)
\teq I_1 + I_2 ,\\
\td \xi_2 = \chi(R/ M) \pa_y(\hat \th - \bar \th) 
+ \f{1}{r} \sin (\b ) \f{R}{M}  \chi^{\prime}( \f{R}{M}) (\hat \th - \bar \th)
\teq I_3 + I_4 ,\\
\eal
\eeq
Using \eqref{eq:small1}, $|D_R^k \chi(R/ M)| \les 1$, and a direct computation similar to \eqref{eq:small2D_om}, we get 
 \beq\label{eq:small2D_th1}
 E(0, I_1 , 0) \les \al^{-1} \e^{\al / 6}. 
\eeq

For $I_2$, comparing \eqref{eq:profile_2D} and \eqref{eq:pertb_th}, and using \eqref{eq:pertb_f} 
and the operator $J$ in \eqref{eq:polar_2D}, we yield 
\beq\label{eq:pertb7}
\hat \th- \bar \th 
= f(y) - 1 + x J( \hat \eta - \bar \eta),
\quad \f{1}{r} (\hat \th- \bar \th ) 
= \f{1}{r} (f(y)-1) + \cos(\b)  J( \hat \eta - \bar \eta),
\eeq

Using Lemma \ref{lem:profile_2D}, we can almost commute the derivatives $D_R, D_{\b}$ with $J$. Since 
\[
|D_R^i D_{\b}^j g| \les 1, \quad g = \sin \b , \cos \b ,  \chi(R/ M), 
\]
using \eqref{eq:small1} and \eqref{eq:Jop} for $J(\bar \eta)$,  we yield 
\[
|D_R^i D_{\b}^j J( \hat \eta - \bar \eta) | 
\les_{i,j}  J(  P_2 ) , 
\quad 
P_2 \teq \e^{\al / 6} \min(R^3, R^{-2}) (\cos(\b))^{5\al/6} \one_{\pi/2-\b \leq 2 \e}.
\]

For $\e$ small, following the estimates in \eqref{eq:Jop_eta1}, \eqref{eq:Jop_eta2}, within the support of $P_2$, we get $x / y \leq 4 \e,  x^2 + y^2\asymp y^2, \cos(\b) = x / r \asymp x / y$. We obtain 
\beq\label{eq:Jop_eta3}
\bal
 P_2(z, y) & \les \e^{\al / 6} \min( y^{3\al}, y^{-2\al})  \cdot (z/y)^{5\al/6} \one_{z \leq 4 \e y}
\teq A(y) \cdot (z/y)^{5\al/6} \one_{z \leq 4 \e y}, \\
 J(P_2) & = \f{1}{x}\int_0^x P_2(z, y) d z
\les A(y) \f{1}{x} \int_0^{\min(x, 4\e y)} \f{ z^{5\al / 6} }{y^{5\al/6} } d z  \les A(y)  \f{ \min(4 \e y, x)^{1 + 5\al / 6} }{ x y^{5\al / 6}} \\
& \les \e^{\al / 6} \min( (R \sin(\b))^{3\al}, (R \sin \b)^{-2\al}) 
\min( (\cos \b)^{5\al / 6}, \tan \b  )  \\
& \les \e^{\al / 6} \min(R^3, R^{-2})  ( \cos \b )^{5\al/6} (\sin \b )^{1/2}) ,
\eal
\eeq
where the last inequality can be obtained by discussing $\b \leq \pi/4$ and $\b>\pi/4$.

Since $y = r \sin(\b), D_R = \f{1}{\al} D_r$ \eqref{eq:DR}, \eqref{eq:polar_2D} and
\beq\label{eq:Dy}
\bal
& D_R q(y) = \al^{-1} r \pa_r q(y) = \al^{-1} r \sin(\b) q^{\prime}(y)
 = \al^{-1} y q^{\prime}(y),  \\
 & D_{\b} q(y) = \sin(2\b) \pa_{\b} q(y) =  \sin(2\b) r \cos(\b) q^{\prime }(y)
 = 2 \cos^2(\b) y q^{\prime}(y), 
\eal
\eeq
for $i+ j \leq 5$, using \eqref{eq:pertb_f}, we obtain 
\[
|D_R^i D_{\b}^j ( f(y) - 1 ) | 
\les \al^{-i} \sum_{k\leq i+j} |(y\pa_y)^k (f(y)-1)| 
\les \al^{-5}  (\al^2 + \e) |y| \one_{|y|\leq 1}.
\]

For $I_2, I_4$ \eqref{eq:pertb_decom}, due to the cutoff function $\chi^{\prime}(R/ M)$, within the support, we get $R \asymp M, r = R^{1/\al} \geq 1$. Therefore, for $(q, l) = (\cos(\b), 2)$ or $(\sin(\b), 4)$, combining the above estimates and using \eqref{eq:pertb7}, for $i+j \leq 5$, we yield 
\[
|D_R^i D_{\b}^j  I_l|
\les \al^{-5} r^{-1}( \al^2 + \e ) \one_{R \geq M} 
|y| \one_{|y| \leq 1} + \e^{\al / 6}\min(R^3, R^{-2}) ( \cos \b)^{5\al/6} ( \sin \b)^{1/2} .
\]

For the first term, since $y = r \sin(\b) \leq 1,  r = R^{1/\al} \geq M^{1/\al}$, within the support, we get 
$(\sin(\b))^{\al} \leq R^{-1} \leq M^{-1}, \sin \b \leq r^{-1} $, and 
\[
r^{-1} |y| = \sin \b \les r^{-1/2} \sin(\b)^{1/4} \sin^{1/4}(\b) \cos(\b)
\les R^{-1/(2\al)} M^{-1/(4\al)}\sin^{1/4}(\b) \cos(\b).
\]

Thus, plugging the above estimates in \eqref{eq:EE}, we get 
 \beq\label{eq:small2D_th2}
E(0, I_2, I_4) \les \al^{-1}(  M^{-1/(4\al)} \al^{-5}(\e^2 + \al) 
+  \e^{\al / 6} ) .
\eeq

Finally, we estimate $I_3$, which is similar to that of $I_2, I_4$. We use the following decomposition 
\beq\label{eq:pertb_decomy}
\pa_y(\hat \th - \bar \th) 
= f_y(y) + \int_0^{ x} (\hat \eta_y - \bar \eta_y)(z, y) d z \teq f_y(y) +
x J( \pa_y \hat \eta - \pa_y \bar \eta ).
\eeq
Using $|\al^i D_R^i x| = |D_r^i x|  \les x, |D_{\b}^i x| \les x$, \eqref{eq:Jop} for $J$, 
and \eqref{eq:small1} for $\hat \eta - \bar \eta$, for $i+j \leq 5$, 
we yield 
\[
|D_R^i D_{\b}^j (x J( \pa_y \hat \eta - \pa_y \bar \eta))| 
\les \al^{-5} x J(P_3),
\quad P_3 \teq \f{1}{r} \cdot 
\e^{\al / 6} \min(R^3, R^{-2}) (\cos(\b))^{5\al/6}  \one_{\pi/2-\b \leq 2 \e}.
\]

Using estimates similar to \eqref{eq:Jop_eta3}, we derive 
\[
\bal
 P_3(z, y) &\les \e^{\al / 6} \f{1}{y} \min(y^{3\al}, y^{-2\al}) (z/ y)^{5\al / 6} \one_{z \leq 4\e y},  \\
  x J(P_3) &= \int_0^{\min(x, 4 \e y)} P_3(z, y) d z
\les \e^{\al /6} \min(y^{3\al}, y^{-2\al}) \f{ \min(x, y)^{5\al / 6 + 1}}{ y^{1 + 5\al / 6}} \\
&\les \e^{\al / 6} \min(R^{3} (\sin \b)^{3\al}, R^{-2} (\sin \b)^{-2\al}) 
  \cos \b.
\eal
\]

For $f_y(y)$, using \eqref{eq:pertb_f}, chain rule $D_R = \al^{-1} D_r$, 
and $D_R q(y) = 2 \cos^2(\b) (y\pa_y) q(y)$ \eqref{eq:Dy},
for $i+j \leq 5$, we get
\[
|D_R^i D_{\b}^j f_y(y) | \les \al^{-5} ( \cos \b)^{l(j)} \sum_{k\leq 5} |(y\pa_y)^k f_y(y)|
\les \al^{-5} \one_{y\leq 2\d} \al^2 y^{2\al} ( \cos \b)^{l(j)}, 
\quad l(j) = \one_{j \geq 1}.
\]

Since we choose $ \d < \f{1}{2}$ and $ 1 \leq y^{-2\al} = R^{-2}  (\sin \b)^{-2\al}$, we get 
\[
y^{2\al}\one_{y \leq 2\d} 
\les \d^{\al / 4} \min(y^{ 7\al /4}, y^{-2\al})
= \d^{\al/4}\min( R^{7/4} (\sin \b)^{7\al / 4}, R^{-2} (\sin \b)^{-2\al} ).
\]

Combining the above estimates on $f_y(y), J(\pa_y(\hat \eta - \bar \eta)$, and \eqref{eq:pertb_decomy},
we obtain 
\[
|D_R^i D_{\b}^j ( \pa_y(\hat \th - \bar \th))|
\les (\e^{\al / 6} + \al^{-3} \d^{\al / 4})
\min( R^{7/4} (\sin \b)^{7\al / 4}, R^{-2} (\sin \b)^{-2\al} ) (\cos \b)^{l(j)}, 
\quad l(j) = \one_{j \geq 1} ,
\]
which along with $|D_R^i \chi(R / M)| \les_i 1,  D_{\b} \chi(R/ M) = 0$ and \eqref{eq:pertb_decom} imples 
\[
|D_R^i D_{\b}^j I_3 | \les_{i,j} (\e^{\al / 6} + \al^{-3} \d^{\al / 4})
\min( R^{7/4} (\sin \b)^{7\al / 4}, R^{-2} (\sin \b)^{-2\al} ) (\cos \b)^{l(j)}, 
\quad l(j) = \one_{j \geq 1}. 
\]

From the definition of $\cH^3( \psi)$ in \eqref{norm:H22}, the angular weight is not singular near $\b=0$ in the sense that $ \sin(\b)^{-2\al} \in L^2( \sin(\b)^{-\s}  )$. 
for $M \geq  1$, we get 
 \beq\label{eq:small2D_th3}
\bal
|| I_3||_{\cH^3(\psi)}
& \les (\al^{-3} \d^{\al/4} + \e^{\al / 6})
\int \B\{ \f{(1+R)^4}{R^4} (\sin \b)^{-\s} \B(  (\cos \b)^{-\g}  (\cos \b)^2
+ (\cos \b)^{-\s} \B) \\
& \qquad  \cdot \min( R^{7/4} (\sin \b)^{7\al / 4}, R^{-2} (\sin \b)^{-2\al} )^2 \B\} d R d \b  \\
 & \les \al^{-3} \d^{\al/4} + \e^{\al / 6}.
\eal
\eeq

Similarly, using the definition of $\cC^1$ \eqref{norm:H22}, we get $|| I_3 ||_{\cC^1} \les (\al^{-3} \d^{\al/4} + \e^{\al / 6})$. Summarizing \eqref{eq:small2D_om}, \eqref{eq:small2D_th1},\eqref{eq:small2D_th2}, \eqref{eq:small2D_th3}, we prove 
\[
\limsup_{M \to \infty} E(\td \om_0, \td \eta_0, \td \xi_0) 
\les \al^{-3} \d^{\al/4} + \e^{\al / 6} 
+ \al^{-1} \e^{\al / 6} + \al^{5/2}. 
\]
Note that for the $|| \td \xi ||_{\cC^1}$ norm in \eqref{eq:EE}, we have an extra factor $\al^{1/2}$. Choosing $\al$ small enough and then choosing $\d, \e$ small depending on $\al$ and $M$ sufficiently large, we obtain 
\[
E(\td \om_0, \td \eta_0, \td \xi_0)  < K_* \al^2,
\]
where $K_*$ is the constant in Theorem \ref{thm19:bous}. Using the argument below Theorem \ref{thm19:bous}, we conclude the proof of Theorem \ref{thm:bous}. 
\end{proof}

The proof of Theorem \ref{thm:euler_bd} for Euler equations follows essentially the same construction and Proposition \ref{prop:euler_bd}. In \cite{chen2019finite2}, the support of the blowup solution $(\om^{\th}, u^{\th})$ to \eqref{eq:euler1},\eqref{eq:euler2} is controlled uniformly up to the blowup time with $\supp(\om^{\th}), \supp(u^{\th}) \subset \{ (r, z): (r-1)^2 + z^2 < 1/4 \} $, which satisfies the assumption \eqref{eq:euler_supp} in Proposition \ref{prop:euler_bd}. 
Thus, we omit the proof.

 \begin{remark}

There is an oversight in the proof of \eqref{eq:small2D_th0} in Appendix A.6 \cite{chen2019finite2}. The following estimates are established in \cite{chen2019finite2}: $| D_R^i D_{\b}^j J(\bar \eta) | \les J(\bar \eta) \les \bar \eta.$

However, due to the singular weight $ (\sin 2 \b)^{-\g/2}$ in $\cH^3$ \eqref{norm:H22}, \eqref{wg}, for $j \geq 1$, the above estimate does not imply that $(D_R^i D_{\b}^j J(\bar \eta) )^2  (\sin 2 \b)^{-\g} $ is integrable in $\b$ near $\b=0$. Instead, based on Lemma \ref{lem:profile_2D} and the formula of $\bar \eta$ \eqref{eq:profile_2D}, 
it is not difficult to obtain stronger estimates  
\[
| D_R^i D_{\b}^j J(\bar \eta) | \les  J( (\al \sin \tau)^{l(j)} \bar \eta)
+ (\sin \b)^{l(j)} J(\bar \eta), 
\quad l(j) = \one_{j \geq 1},  \quad J( \sin(\tau) \bar \eta ) \les \al^{-1/2} ( \sin(\b) )^{1/3} \bar \eta,
\]
which justifies the integrability in $\b$ near $\b=0$, where $\sin(\tau) \bar \eta(z, y) = \f{y\bar \eta(z, y) }{(y^2 + z^2)^{1/2}} $ \eqref{eq:polar_2D}, \eqref{eq:profile_2D}.


 \end{remark}

\section{Continuation of $C^{\infty}( D \bsh  O ) $ regularity}\label{sec:BKM}

In this section, we prove that the $C^{\infty}(D \bsh O)$ regularity can be continued in various settings. We first introduce weighted H\"older norms for the energy estimates. 

\vspace{0.1in}
\paragraph{\bf{Weighted H\"older norms}}
Denote by $|| f ||_{ \dot C^{\al}(D) }$ the homogeneous H\"older space
\[
 || f ||_{ \dot C^{\al}(D) } \teq \sup_{x, y \in D}  \f{ |f(x) - f(y) | }{|x-y|^{\al}},
 \quad  || f ||_{  C^{\al}(D) }
  =  || f ||_{ \dot C^{\al}(D) } + || f ||_{L^{\infty}}. 
\]
To simplify the notation, we identify $ || f ||_{\dot C^0} $ as $|| f ||_{L^{\infty}}$. Without specification, we choose $D = \R^2_+$ and simplify $|| f ||_{ \dot C^{\al}(\R^2_+) }$ as  $|| f ||_{ \dot C^{\al} }$. Since the solution is not smooth near $0$, we consider the following weighted H\"older space for $\s \geq 1, k \in \BZ_+ \cup \{ 0\}, \al \in (0, 1)$ 
\beq\label{norm:Xk}
\bal
& \la x \ra_{\s} \teq  |x| \cdot \f{ |x|^{\s-1}}{1 + |x|^{\s-1}}, 
\quad  || f ||_{  \dot X_{\s}^{k, \al} } 
 = || \la x\ra_{\s}^{k+\al} \na^k f ||_{\dot C^{\al}} ,  \\
  & || f ||_{X_{\s}^{k, \al}} 
\teq \sum_{ 0 \leq i \leq k} ( || f ||_{ \dot X_{\s}^{i, \al}} + || \la x \ra_{\s}^i \na^i  f ||_{L^{\infty} } ), \quad X_{\s}^{\infty} \teq \bigcap_{k\geq 0}  X_{\s}^{k,\al} .
\eal
\eeq
In Lemma \ref{lem:interp}, we show that the definition of $X_{\s}^{\infty}$ is independent of $\al$. The initial data, e.g. $\om_0$, in Theorems \ref{thm:euler_3D}-\ref{thm:euler_bd} is quite singular near $0$. For each derivative $\pa_i \om_0$, we need to multiply by a weight vanishing more than $|x|$ for the boundedness of $|x|^{\s} \pa_i \om_0$ near $0$. See discussion in the paragraph \textit{Ideas of the proof} in Section \ref{sec:result}. From the definition of $X_{\s}^{k, \al}$,  $f \in X_{\s}^{k, \al}$ implies that $f$ is smooth away from $x=0$. Thus $ X_{\s}^{\infty} \subset C^{\infty}(\R^2_+ \bsh \{0\}) $. Clearly, we have $C_c^{\infty} \subset X_{\s}^{\infty}$.


Note that a similar weighted H\"older space has been used in \cite{elgindi2018finite}. 
In \cite{elgindi2018finite},  
only the lower order norms are used, e.g. $\dot X_1^{0, \al}$ for the vorticity.
Different from  \cite{elgindi2018finite}, we need to perform estimates, including the product rules and elliptic estimates, in $X_{\s}^{k,\al}$ for all $k$. Moreover, 
our weights are not homogeneous for $\s > 1$.

We have the following BKM-type continuation criterion \cite{beale1984remarks,chae1999local} for the Boussinesq equations. 

\begin{prop}\label{prop:BKM_bous}

Let $X_{\s}^{\infty}$ be the space defined in \eqref{norm:Xk}. For any $\s \geq 1, \al \in (0, 1), q \in [1, \infty)$ and initial data $\om_0  \in  X_{\s}^{\infty} \cap C^{\al} \cap L^q, \th_0 \in C^{1,\al}  \cap X_{\s}^{\infty}  , \na \th_0 \in L^q $, $\uu_0, \th_0 \in L^2$, $\om_0$ odd in $x$,$\th_0$ even in $x$,  the unique local solution $\om, \na \th $ to the Boussinesq equations \eqref{eq:bous} in $\R^2_+$ exists in  $L^{\inf}( [0, T); X_{\s}^{\infty} \cap C^{\al} \cap L^q )$ if and only if 
\[
  \int_0^T |  || \na \th(t) ||_{\infty} d t < + \infty.
  \]
Moreover, in the life span, we have $ \th, \uu \in   C^{\infty}(\R^2_+ \bsh \{ 0 \} )$. 

\end{prop}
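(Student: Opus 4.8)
The plan is to establish a BKM-type continuation criterion for the Boussinesq equations in the weighted H\"older scale $X_\s^{k,\al}$, following the classical energy-method template of Beale--Kato--Majda but with all estimates carried out in spaces whose weights $\la x\ra_\s^{k+\al}$ vanish near the singular point $x=0$. The forward implication is trivial (if the solution stays in $X_\s^\infty\cap C^\al\cap L^q$ on $[0,T)$, then $\na\th$ is bounded since $\th_0\in C^{1,\al}$ and the transport structure $\th_t+\uu\cdot\na\th=0$ propagates $C^{1,\al}$ and hence $L^\infty$ control of $\na\th$; more carefully one uses that $X_\s^{0,\al}$ controls $\|\na\th\|_{C^\al}$ away from the origin and the weight is bounded). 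The substance is the reverse implication: assuming $\int_0^T\|\na\th(t)\|_\infty\,dt<\infty$, bootstrap the local solution to remain in $X_\s^\infty\cap C^\al\cap L^q$ up to time $T$. First I would set up a hierarchy of a priori estimates: for each $k\ge0$, differentiate the vorticity equation $\om_t+\uu\cdot\na\om=\th_x$ and the gradient-temperature equation $\na\th_t+\uu\cdot\na\na\th=-(\na\uu)^{T}\na\th$ $k$ times, multiply by the appropriate weight, and estimate the $\dot C^\al$ and $L^\infty$ norms. The key algebraic inputs are: (i) a weighted product rule $\|fg\|_{X_\s^{k,\al}}\lesssim\|f\|_{X_\s^{k,\al}}\|g\|_{X_\s^{k,\al}}$ (here the non-homogeneity of $\la x\ra_\s$ for $\s>1$ must be handled, since the weights of $f$, $g$ and $fg$ do not match exactly — one uses that $\la x\ra_\s^{a+b}\lesssim\la x\ra_\s^a\la x\ra_\s^b$ and that extra powers of $\la x\ra_\s$ only help); (ii) a weighted elliptic/Biot--Savart estimate $\|\na\uu\|_{X_\s^{k,\al}}\lesssim\|\om\|_{X_\s^{k,\al}}+(\text{lower order } L^q\text{-type terms})$, recovering $\uu$ from $\om$ via $\uu=\na^\perp(-\D)^{-1}\om$ and using Calder\'on--Zygmund theory together with the $L^q$ and $L^2$ control to pin down $\|\uu\|_\infty$; (iii) the transport/commutator estimate controlling $\|[\na^k,\uu\cdot\na]f\|$ in terms of $\|\na\uu\|_\infty$ and $\|f\|_{X_\s^{k,\al}}$ — the crucial point being that the top-order transport term $\uu\cdot\na\na^k f$ is handled either by the $\log$-type BKM trick or, more cheaply here, by noting that the flow map preserves the weighted H\"older class because $\uu(0)=0$ on the boundary and the weight $\la x\ra_\s$ is transported with controlled distortion.

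The order of steps would be: (1) record the forward (easy) direction and the regularity statement $\th,\uu\in C^\infty(\R^2_+\bsh\{0\})$, which follows because $f\in X_\s^\infty$ forces $f\in C^\infty$ away from $x=0$ by definition of the norm (the weight is smooth and nonvanishing there); (2) prove the lemma that $X_\s^\infty$ is independent of $\al$ (referenced as Lemma~\ref{lem:interp}) via interpolation, so that we may work with whatever $\al$ is convenient; (3) establish the weighted product rule and weighted elliptic estimate as standalone lemmas; (4) set up the flow map $\Phi_t$ generated by $\uu$, show it is bi-Lipschitz with constants controlled by $\exp(\int_0^t\|\na\uu\|_\infty)$, and show it preserves $X_\s^{k,\al}$ with norm distortion controlled by the same exponential plus the weight-distortion factor (here we need $\la\Phi_t(x)\ra_\s\asymp\la x\ra_\s$, which uses that $\uu$ vanishes at the boundary and is Lipschitz, so trajectories starting near $0$ stay comparably near $0$); (5) combine with the Duhamel/ODE-in-Banach-space argument: along characteristics, $\|\om(t)\|_{X_\s^{k,\al}}$ and $\|\na\th(t)\|_{X_\s^{k,\al}}$ satisfy a coupled Gronwall inequality whose coefficients involve $\|\na\uu\|_\infty\lesssim\|\om\|_{X_\s^{0,\al}}+(\text{lower order})$ and $\|\na\th\|_\infty$ — the first of which is controlled at the lowest level $k=0$ by a closed BKM-type inequality (using the logarithmic Beale--Kato--Majda bound $\|\na\uu\|_\infty\lesssim1+\|\om\|_\infty(1+\log^+\|\om\|_{C^\al})$ or the weighted analogue), and the second of which is the hypothesis; (6) bootstrap $k$ upward: once the $k=0,\dots,k-1$ norms are shown finite on $[0,T]$, the equation for the $k$-th level is linear in the top-order unknown with coefficients already controlled, so Gronwall closes; (7) separately propagate $\|\om\|_{C^\al}$, $\|\om\|_{L^q}$, $\|\na\th\|_{L^q}$ and the $L^2$ bounds by standard transport estimates (these are needed as inputs to the elliptic estimate and do not interact badly with the weighted estimates).

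The main obstacle I expect is Step (4)–(5): making the weighted-space transport estimate rigorous near the singular point $x=0$. The difficulty is two-fold. First, the advecting velocity $\uu$ is itself only $C^{1,\al}$ (not Lipschitz-$C^1$) near $0$, so the flow map is only H\"older-bi-Lipschitz at the worst point, and one must check that this is enough to preserve the weighted H\"older classes — the saving grace is that the weight $\la x\ra_\s^{k+\al}$ for $\s$ large kills any blow-up coming from the characteristics converging toward $0$, precisely the point of choosing $\s$ large in Section~\ref{sec:pertb}. Second, the weight $\la x\ra_\s$ is non-homogeneous (it behaves like $|x|^\s$ near $0$ and like $|x|$ near $\infty$), so the commutator $[\na^k,\uu\cdot\na]$ produces terms where derivatives land on the weight, generating mismatched powers of $\la x\ra_\s$; one must verify these are all \emph{favorable} (extra decay, or controlled by lower-order norms) rather than dangerous. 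I would handle this by working in the $(R,\b)$ coordinates of Section~\ref{sec:pertb} near $0$, where $\uu\cdot\na$ has the explicit structure of \eqref{eq:polar0} and the weight is a pure power of $R$, reducing the commutator bookkeeping to counting powers of $R$; away from $0$ the problem is the classical BKM argument in ordinary H\"older spaces and needs no new ideas. The elliptic estimate (ii) is routine once one accepts that $(-\D)^{-1}$ maps weighted H\"older spaces to themselves with the expected two-derivative gain, which follows from Schauder theory localized dyadically in $|x|$ combined with the $L^2$ and $L^q$ tails to handle the kernel's global behavior.
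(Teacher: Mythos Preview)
Your proposal is essentially correct and follows the same architecture as the paper's proof: a weighted product rule, a weighted elliptic estimate for $\na\uu$ in terms of $\om$, commutator control between the transport and the weight, and a Gronwall argument. A few points where the paper's execution differs from (and simplifies) yours are worth recording.

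\textbf{Order of estimates.} The paper first invokes the classical (unweighted) BKM argument to bound $\|\na\uu\|_{C^\al}$, $\|\om\|_{C^\al}$, $\|\th\|_{C^{1,\al}}$, $\|\om\|_{L^q}$, $\|\na\th\|_{L^q}$, $\|\uu\|_{L^2}$ in terms of $\int_0^t\|\na\th\|_\infty$, and only \emph{then} performs the weighted $X_\s^{k,\al}$ estimates with these quantities already treated as known coefficients $C(0,t)$. This decouples the two layers and makes the weighted Gronwall inequality immediately linear in the top-order norm. Your plan intertwines them via a bootstrap in $k$, which works but is less clean.

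\textbf{The commutator with the weight.} This is where the paper is more direct than your Steps (4)--(5). The paper does not use flow-map distortion, nor $(R,\b)$ coordinates. It simply computes
\[
\frac{u_l\,\pa_l\la x\ra_\s}{\la x\ra_\s}=\frac{u_l}{x_l}\cdot\frac{x_l\,\pa_l\la x\ra_\s}{\la x\ra_\s},
\]
observes that the second factor is bounded and $C^\al$ by direct calculation, and handles $\tfrac{u_l}{x_l}$ via the integral formula $\tfrac{u_l}{x_l}(x)=\int_0^1(\pa_l u_l)(tx_l,x_{l'})\,dt$, whose $C^\al$ norm is $\les\|\na\uu\|_{C^\al}$. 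The formula requires $u_l|_{x_l=0}=0$ for \emph{both} $l$: this is where the boundary condition ($u_2|_{x_2=0}=0$) and the odd-in-$x$ symmetry of $\om$ ($u_1|_{x_1=0}=0$) are each used. Your worry that ``$\uu$ is only $C^{1,\al}$ (not Lipschitz)'' is misplaced: $C^{1,\al}$ gives $\na\uu\in L^\infty$, so $\uu$ is globally Lipschitz, and no passage to $(R,\b)$ coordinates is needed.

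\textbf{The product rule.} Your version $\|fg\|_{X_\s^{k,\al}}\les\|f\|_{X_\s^{k,\al}}\|g\|_{X_\s^{k,\al}}$ is not what the paper proves or uses. The paper's Lemma~\ref{lem:prod} gives the \emph{tame} form $\|fg\|_{\dot X_\s^{k,\al}}\les\|f\|_\infty\|g\|_{X_\s^{k,\al}}+\|g\|_\infty\|f\|_{X_\s^{k,\al}}$, obtained by pairing the weighted interpolation Lemma~\ref{lem:interp} with Young's inequality. Combined with the elliptic estimate and the already-controlled $\|\na\uu\|_\infty,\|\om\|_\infty$, this makes the differential inequality for $\|\om\|_{X_\s^{k,\al}}+\|\na\th\|_{X_\s^{k,\al}}$ linear at every fixed $k$, so a single Gronwall closes without your inductive bootstrap on $k$.
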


The symmetries of $\om, \th$ in $x$ are preserved and used to obtain $u_i |_{x_i=0} = 0$
in Section \ref{sec:EE}. 
For the Euler equations \eqref{euler} in $\R^3$ (not neccessary axisymmetric), we have similar results.  

\begin{prop}\label{prop:euler_R3}

For any $\s \geq 1, \al \in (0, 1), q \in [1, \infty)$ with initial data $\om_0 \in C^{\al} \cap X_{\s}^{\infty} \cap L^q$, $\om_{0,i}$ even in $x_i$ and odd in $x_j, i, j \in \{1,2,3\}, i \neq j $, $\uu_0 \in L^2$, the unique local solution $\om, \uu$ to the Euler equations \eqref{euler} exists with $\om, \na \uu 
\in L^{\inf}( [0, T); X_{\s}^{\infty} \cap C^{\al} \cap L^q )$ if and only if 
\beq\label{eq:BKM}
  \int_0^T || \om (t) ||_{\infty} dt < + \infty.
\eeq
Moreover, in the lifespan, we have $\uu(t) \in C^{\infty}(\R^3 \bsh \{ 0 \} ) \cap L^2$. 
\end{prop}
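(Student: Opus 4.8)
\textbf{Proof proposal for Proposition \ref{prop:euler_R3}.}

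The plan is to prove a BKM-type continuation criterion for the weighted Hölder scale $X_\s^{\infty}$ by running high-order a priori estimates, with the key novelty being that all weights $\la x\ra_\s^{k+\al}$ vanish (at least) linearly near $x=0$, which absorbs the growing singularity of $\na^k\uu$ at the origin. First I would record the basic toolbox in the weighted spaces: a product rule $\|fg\|_{X_\s^{k,\al}}\les \|f\|_{X_\s^{k,\al}}\|g\|_{X_\s^{k,\al}}$ (using $\la x\ra_\s^{i+\al}\la x\ra_\s^{j+\al}\les \la x\ra_\s^{i+j+\al}$ pointwise, which holds because $\la x\ra_\s\les 1$ is bounded and $\la x\ra_\s$ is comparable to $|x|$ near the origin so powers add up favorably), a composition/interpolation lemma showing $X_\s^{\infty}$ is independent of $\al$ (this is Lemma \ref{lem:interp} cited in the text), and the crucial elliptic estimate: if $\na\cdot\uu=0$, $\uu=\na\times(-\Delta)^{-1}\om$, then $\|\na\uu\|_{X_\s^{k,\al}}\les \|\om\|_{X_\s^{k,\al}}$ for $k\ge 1$, together with a low-frequency bound $\|\na\uu\|_{L^\infty}\les \|\om\|_{L^\infty\cap L^q} + \|\om\|_{\dot C^\al}$ at the bottom of the scale. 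The Calderón–Zygmund operator $\na\otimes\na(-\Delta)^{-1}$ is bounded on $\dot C^\al$, and the weighted version follows by commuting the weight through the singular integral: the commutator $[\,\la x\ra_\s^{k+\al}, T\,]$ lands, after using the vanishing and smoothness of the weight away from $0$ plus its boundedness, in a strictly better space, so one closes by induction on $k$. Because the weights are not homogeneous for $\s>1$, the commutator analysis has to be done by splitting into the region $|x|\les 1$ (where $\la x\ra_\s\asymp |x|$) and $|x|\gtr 1$ (where $\la x\ra_\s\asymp 1$ and the weight is essentially a smooth bounded cutoff), and in the near-field region one uses that $|x|^{k+\al}T(\om)$ can be re-expressed via the scaling of the kernel.

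Next I would set up the transport estimates. The vorticity equation $\om_t + \uu\cdot\na\om = \om\cdot\na\uu$ is transported by $\uu$; the flow map $\Phi_t$ preserves the origin (since $\uu(0)=0$, which follows from $\om\in L^q\cap L^\infty$ giving $\na\uu\in L^\infty$ hence $\uu$ Lipschitz near $0$, and from the odd/even symmetries of $\om_{0,i}$ which are preserved and force $\uu$ to vanish at $0$ — actually one needs $\uu(0,t)=0$; the symmetry $\om_{0,i}$ odd in $x_j$ for $j\ne i$ makes $\uu$ odd appropriately so that $\uu_i|_{x_i=0}=0$, giving $\uu(0)=0$). Differentiating the equation $k$ times and pairing against the weight, the material derivative of $\|\om\|_{X_\s^{k,\al}}$ is controlled by $\|\na\uu\|_{L^\infty}$ times $\|\om\|_{X_\s^{k,\al}}$ plus lower-order terms and the commutator $[\uu\cdot\na,\la x\ra_\s^{k+\al}\na^k]\om$; the latter is $\les \|\na\uu\|_{X_\s^{k,\al}}\|\om\|_{X_\s^{k,\al}}\les \|\om\|_{X_\s^{k,\al}}^2$ by the elliptic estimate, but crucially the top-order term $\|\na\uu\|_{X_\s^{k,\al}}$ also needs the lower-order factor to be $\|\om\|_{L^\infty}$-type — this is the standard BKM trick: one shows $d/dt\|\om\|_{X_\s^{k,\al}}\les (\|\om\|_{L^\infty}+\|\na\uu\|_{L^\infty})(1+\log^+\|\om\|_{X_\s^{k,\al}})\|\om\|_{X_\s^{k,\al}}$ via a logarithmic interpolation inequality $\|\na\uu\|_{L^\infty}\les (1+\|\om\|_{L^\infty}+\|\om\|_{L^q})\log(e+\|\om\|_{X_\s^{k,\al}})$, and then Grönwall closes under $\int_0^T\|\om\|_{L^\infty}\,dt<\infty$. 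The $L^q$ norm of $\om$ is conserved in 2D-type transport but in 3D only the quantity $\om/\rho$ or volume-preservation gives $\|\om\|_{L^q}$ propagation via the stretching term; one uses $d/dt\|\om\|_{L^q}\les \|\na\uu\|_{L^\infty}\|\om\|_{L^q}$ which is fine.

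The main obstacle, as I see it, is the weighted elliptic estimate with the inhomogeneous weight $\la x\ra_\s$ for $\s>1$ — specifically controlling $[\la x\ra_\s^{k+\al}\na^k, \na\otimes\na(-\Delta)^{-1}]\om$ in $\dot C^\al$ uniformly in $k$. Near the origin the weight behaves like $|x|^{k+\al}$ and the issue is that multiplying a Calderón–Zygmund output by a power of $|x|$ that vanishes at the origin does not obviously improve regularity unless one exploits cancellation in the kernel; the resolution is to write $\la x\ra_\s^{k+\al}T\om = T(\la x\ra_\s^{k+\al}\om) + [\la x\ra_\s^{k+\al},T]\om$ and observe that the commutator kernel $\big(\la x\ra_\s^{k+\al}-\la y\ra_\s^{k+\al}\big)K(x-y)$ is less singular (the difference of weights gains a power of $|x-y|$ by the mean value theorem, at the cost of $\na\la\cdot\ra_\s^{k+\al}$ evaluated at an intermediate point, which is $\les \la\cdot\ra_\s^{k-1+\al}$), so the commutator maps $X_\s^{k-1,\al}\to \dot C^\al$-type spaces and one closes by downward induction on the order of the weight. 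Away from the origin, $\la x\ra_\s$ and its logarithm are smooth and bounded, so there the estimate is classical. A secondary technical point is ensuring the flow map and its inverse preserve the $X_\s^{\infty}$ structure — i.e. that composition with a volume-preserving diffeomorphism fixing $0$ with $\na\uu\in X_\s^{\infty}$ is bounded on $X_\s^{\infty}$ — which again reduces to the product and chain rules above plus the fact that $\Phi_t(0)=0$ so the weight is essentially preserved under pullback. Once these are in place, the "if" direction is the Grönwall argument; the "only if" direction is immediate since $X_\s^{\infty}\subset C^{1}$ controls $\|\om\|_{L^\infty}$, and the final regularity statement $\uu(t)\in C^\infty(\R^3\bsh\{0\})\cap L^2$ follows from $X_\s^{\infty}\subset C^\infty(\R^3\bsh\{0\})$ (built into the definition, since away from $0$ the weight is bounded below) together with $\|\uu\|_{L^2}$ being propagated by the $L^2$ energy identity for Euler.
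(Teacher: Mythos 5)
Your overall architecture — weighted product/interpolation rules, a weighted Calder\'on--Zygmund estimate for $\na\uu$ in terms of $\om$, transport estimates closed by Gronwall under $\int_0^T\|\om\|_{L^\infty}\,dt<\infty$, and the symmetries forcing $u_i|_{x_i=0}=0$ so the origin is fixed — matches the paper's strategy (the paper proves Proposition \ref{prop:BKM_bous} in detail and notes that Proposition \ref{prop:euler_R3} follows the same scheme without a boundary). However, your mechanism for the key weighted elliptic estimate has a genuine gap. You propose writing $\la x\ra_{\s}^{k+\al}\,T\,\na^k\om = T(\la \cdot\ra_{\s}^{k+\al}\na^k\om) + [\la \cdot\ra_{\s}^{k+\al},T]\,\na^k\om$ and bounding the commutator by a mean-value gain in the kernel. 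But the commutator term necessarily contains the contribution
\[
\la x\ra_{\s}^{k+\al}\int_{|y|\le |x|/2} K(x-y)\,\na^k\om(y)\,dy ,
\]
and on that region the only available pointwise control is $|\na^k\om(y)|\les \la y\ra_{\s}^{-k}\|\om\|_{X_{\s}^{k,\al}}\sim |y|^{-k\s}\|\om\|_{X_{\s}^{k,\al}}$ near the origin, which is not locally integrable once $k\s\ge d$. The mean-value gain $|x-y|\,\la \xi\ra_{\s}^{k+\al-1}$ does not help here (for $|y|\ll|x|$ the weight difference is comparable to $\la x\ra_{\s}^{k+\al}$ itself), and your downward induction on the weight order inherits the same non-integrability at every level. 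This is precisely the point the paper's Lemma \ref{lem:BSlaw} is designed to handle: it splits $\om=\om\chi_1+\om\chi_2$ in the \emph{integration variable}, with $\chi_1$ supported in $\{|y|\le 2^{n-1}\}$ and $\chi_2$ away from the origin; for the near-origin piece \emph{all} $k$ derivatives are moved onto the kernel, $\pa_{x_1}^k R_{ij}\om_1(x)=C_{ij}\int \pa_{x_1}^kK_{ij}(x-y)W_1(y)\,dy$, so only $\|\om\|_{L^\infty}$ and the bound $|\na^kK_{ij}(x-y)|\les|x|^{-k-2}$ are used, never $\na^k\om$ near $0$. Your split into $|x|\les1$ versus $|x|\gtr1$ is a split in the evaluation point and does not address this.

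Two smaller points. First, your toolbox inequality $\la x\ra_{\s}^{i+\al}\la x\ra_{\s}^{j+\al}\les\la x\ra_{\s}^{i+j+\al}$ justified by ``$\la x\ra_{\s}\les1$'' is false: by \eqref{norm:Xk}, $\la x\ra_{\s}\sim|x|\to\infty$ at infinity. The paper's Lemma \ref{lem:prod} avoids this by distributing the powers so they sum exactly to $k+\al$, and it gives the tame form ($L^\infty$ on one factor) that you correctly note is needed to close the BKM argument. Second, the logarithmic interpolation $\|\na\uu\|_{L^\infty}\les(1+\|\om\|_{L^\infty\cap L^q})\log(e+\|\om\|_{X_{\s}^{k,\al}})$ is not how the paper proceeds: since $\om_0\in C^\al$ is assumed, the unweighted quantities $\|\om\|_{C^\al}$, $\|\na\uu\|_{C^\al}$ are first propagated by the classical BKM/Chae estimate, and the weighted norms are then controlled by a linear Gronwall inequality with coefficients depending only on those already-controlled quantities. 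Your route can likely be made to work but is not needed.
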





Note that in \eqref{euler}, $\om = \na \times \uu$ is a vector. The above symmetries imply that $u_i(x)$ is odd in $x_i$ and even in $x_j, j \neq i$. These symmetries are preserved by the equations. Note that the vorticity $\om_0 = \om_0^{\th} (-\sin \th, \cos \th, 0)$ in the axisymmetric Euler equations without swirl and with $\om_0^{\th}(r, z)$ odd in $z$ satisfies the symmetries in Proposition \ref{prop:euler_R3}.
Denote  $O = \{ (r, z): r= 1, z=0\}$ and 
\beq\label{eq:smax}
D = \{ (r, z) : r \in [0,1], z \in \BT \},  \quad 
	 S_{max} \teq \{ (r, z) : |(r, z) - (1, 0)| < 1/2 \}.
\eeq

For 3D Euler \eqref{eq:euler1}-\eqref{eq:euler2} in $D$, we have a similar result.

\begin{prop}\label{prop:euler_bd}

Let $X_{O, \s}^{\infty}$ be the space defined in \eqref{norm:XkO}. For any $\s \geq 1, \al \in (0, 1)$ with initial data $\om_0^{\th} \in C^{\al} \cap X_{O, \s}^{\infty} , u_0^{\theta} \in C^{1, \al} , \na u_0^{\theta} \in X_{O, \s}^{\infty}$, 
$\om_0^{\th}$ odd in $z$, $u_0^{\th}$ odd or even in $z$, and $\supp(\om_0^{\th}),\supp(u_0^{\th})\subset S_{max}$,
the unique local solution $\om^{\th} \in L^{\infty}([0, T), C^{\al}),  \uu \in L^{\infty}([0, T), C^{1,\al})$ to the Euler equations  \eqref{eq:euler1}-\eqref{eq:euler2} exists with 
$\om^{\th}(t), \na u^{\th}(t) \in  X_{O, \s}^{\infty} $ if  $\int_0^T ||\om^{\th}(t) ||_{L^{\infty}} + || \na u^{\th}(t)||_{L^{\infty}} dt < +\infty$
and 
\beq\label{eq:euler_supp}
 \cup_{0\leq t \leq T} \supp( \om^{\th}(t) ) \cup \supp( u^{\theta}(t)) \subset S_{max}. 
\eeq
Moreover, for $t < T$, we have $\uu(t) \in  C^{\infty}( S_{max} \bsh O ) \cap L^2$.


\end{prop}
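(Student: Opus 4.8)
\textbf{Proof strategy for Proposition~\ref{prop:euler_bd}.}
The plan is to run, for each $k\geq 0$, a high--order weighted H\"older a~priori estimate for $(\omega^\theta,\nabla u^\theta,\nabla(u^r,u^z))$ in the space $X_{O,\sigma}^{k,\alpha}$ of \eqref{norm:XkO} --- the analogue of \eqref{norm:Xk} with the singular weight $\langle\cdot\rangle_\sigma$ recentered at the singular point $O=(1,0)$ --- and to close it by a Gr\"onwall argument whose integrating factor is finite precisely because of the assumed control $\int_0^T\|\omega^\theta(t)\|_\infty+\|\nabla u^\theta(t)\|_\infty\,dt<\infty$. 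This is the same scheme as for Propositions~\ref{prop:BKM_bous}--\ref{prop:euler_R3}; the new features are the cylindrical geometry, the fact that $O$ lies on the solid boundary $\{r=1\}$, and the coupling through the angular velocity $u^\theta$. First I would localize: by \eqref{eq:euler_supp}, $(\omega^\theta,u^\theta)(t)$ is supported in $S_{max}\subset\{1/2<r<3/2\}$ for all $t<T$, so on the support every factor $1/r,1/r^2,1/r^4$ in \eqref{eq:euler1}--\eqref{eq:euler2} is a smooth bounded coefficient bounded away from the axis; in particular \eqref{eq:euler2} is, near $\supp\omega^\theta$, a uniformly elliptic second--order problem on $D$ with the Dirichlet condition $\tilde\psi(1,z)=0$. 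In the local variables $x=(r-1,z)$ near $O$ the system is a two--dimensional transport--elliptic system with one singular point at the origin, which sits on the flat part $\{r=1\}$ of the boundary. Since $(u^r,u^z)$ depends only on $(r,z)$ and $\omega_0^\theta$ is odd in $z$, one has $u^z$ odd and $u^r$ even in $z$; with $u^r(1,z)=0$ this gives $u^r(O)=u^z(O)=0$, so the Lagrangian flow fixes $O$ and the recentered weight $\langle x\rangle_\sigma$ is transported compatibly --- which is what makes estimates anchored at $O$ possible.

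The technical core consists of three ingredients, proved exactly as in Section~\ref{sec:EE} for the Boussinesq case: (i) a \emph{weighted elliptic estimate} for \eqref{eq:euler2} away from the axis,
\[
\|\nabla(u^r,u^z)\|_{X_{O,\sigma}^{k,\alpha}}\ \lesssim_{k,\alpha,\sigma}\ \|\omega^\theta\|_{X_{O,\sigma}^{k,\alpha}}+\|\omega^\theta\|_{L^\infty},
\]
with no loss of derivative (the one--derivative gain of Biot--Savart) and with the weight exponent unchanged --- the flat boundary through $O$ being handled by odd reflection of $\tilde\psi$ in $r-1$, reducing to interior Schauder estimates; (ii) a \emph{weighted product and chain rule} in $X_{O,\sigma}^{k,\alpha}$, needed for the smooth coefficients $1/r^j$ and the nonlinearity $(ru^\theta)^2$; and (iii) a \emph{weighted transport estimate}: if $\partial_t f+(u^r\partial_r+u^z\partial_z)f=g$ then
\[
\frac{d}{dt}\|f\|_{X_{O,\sigma}^{k,\alpha}}\ \lesssim_k\ \|\nabla u\|_{L^\infty}\,\|f\|_{X_{O,\sigma}^{k,\alpha}}+\|g\|_{X_{O,\sigma}^{k,\alpha}}+\mathcal N_{k-1},
\]
where $\|\nabla u\|_{L^\infty}=\|\nabla u^r\|_\infty+\|\nabla u^z\|_\infty$ (controlled by $\|\omega^\theta\|_{C^\alpha}$ via Schauder, which is finite on $[0,T)$ under the stated integral hypothesis by the classical $C^{1,\alpha}$ persistence underlying the BKM criterion) and $\mathcal N_{k-1}$ collects terms bounded by strictly lower--order norms of $f$ and $u$. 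I would also invoke Lemma~\ref{lem:interp} so that $X_{O,\sigma}^\infty$ is independent of $\alpha$.

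With these in hand the bootstrap runs by induction on $k$. Apply the transport estimate to $ru^\theta$ (here $g=0$, so its $X_{O,\sigma}^{k,\alpha}$ norm is controlled by that of the flow), then divide by the smooth factor $r\asymp1$ to control $u^\theta$, hence $\nabla u^\theta$, in $X_{O,\sigma}^{k,\alpha}$; apply the transport estimate to $\omega^\theta/r$ with forcing $g=\tfrac{2(ru^\theta)}{r^4}\partial_z(ru^\theta)$, whose $X_{O,\sigma}^{k,\alpha}$ norm is bounded by the product rule in terms of the level--$k$ control of $ru^\theta$; recover $\omega^\theta$ and feed it into (i) to control $\nabla(u^r,u^z)$ in $X_{O,\sigma}^{k,\alpha}$ from data already bounded at level $\leq k$; then close the coupled inequalities by Gr\"onwall, the integrating factor $\exp(C\int_0^t\|\nabla u\|_\infty)$ being finite on $[0,T)$. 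This yields $\omega^\theta(t),\nabla u^\theta(t)\in X_{O,\sigma}^\infty$ for $t<T$; since $X_{O,\sigma}^\infty\subset C^\infty(\,\cdot\setminus O)$ and $(u^r,u^z)\in C^\infty(S_{max}\setminus O)$ then follows from interior elliptic regularity (with the $L^2$ bound preserved by the energy estimate), we conclude $\uu(t)\in C^\infty(S_{max}\setminus O)\cap L^2$.

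The hard part will be ingredient (iii): the weighted commutator estimates near the singular \emph{boundary} point $O$. After differentiating the transport equation $k$ times and multiplying by $\langle x\rangle_\sigma^{k+\alpha}$, one must show that every term in $[\langle x\rangle_\sigma^{k+\alpha}\nabla^k,\,u\cdot\nabla]f$ is either proportional to $\|\nabla u\|_\infty\|f\|_{X_{O,\sigma}^{k,\alpha}}$ or of strictly lower order, and that (i) does not degrade the weight exponent --- both relying on the compatibility of the vanishing rate of the weight at $O$ with the vanishing of $u$ there, and on careful H\"older--seminorm bookkeeping for the \emph{non-homogeneous} weight $\langle x\rangle_\sigma$ (which behaves like $|x|^\sigma$ near $O$ but like $|x|$ far from it, the subtlety flagged after \eqref{norm:Xk}). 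By contrast the boundary location of $O$ and the $u^\theta$-coupling are handled respectively by reflection and by the fact that $ru^\theta$ is purely transported, and add no essential difficulty beyond the half--plane Boussinesq analysis of Proposition~\ref{prop:BKM_bous}.
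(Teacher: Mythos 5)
Your overall architecture --- weighted transport estimate plus weighted elliptic estimate plus product rules, closed by Gr\"onwall with integrating factor $\exp(C\int_0^t\|\na\uu\|_\infty)$, and the observation that $u^r(1,z)=0$ together with oddness in $z$ pins the flow at $O$ so the weight is compatible with transport --- is the same as the paper's, and the transport/commutator part that you single out as ``the hard part'' indeed carries over from Proposition \ref{prop:BKM_bous} with no essential change. The gap is in ingredient (i), which is precisely where the paper concentrates its effort. You propose to get the weighted elliptic estimate for \eqref{eq:euler2} by ``odd reflection of $\td\psi$ in $r-1$, reducing to interior Schauder estimates.'' But the operator in \eqref{eq:euler2} is $-(\pa_{rr}+\f1r\pa_r+\pa_{zz})+\f{1}{r^2}$, whose coefficients are not invariant under $r\mapsto 2-r$; the odd reflection of $\td\psi$ does not solve any single elliptic equation across $\{r=1\}$, so the reduction to interior estimates does not go through as stated. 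Moreover, the weighted estimates $X_{O,\s}^{k,\al}$ are anchored at a \emph{boundary} point, and the only weighted elliptic input available (Lemma \ref{lem:BSlaw}) is for the flat half-plane Laplacian with odd extension of the \emph{source}, not for a variable-coefficient operator.

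What the paper actually does, and what your sketch is missing, is the following reduction: cut off $\td\psi$ near $O$ by $\chi_1$ and rewrite the equation as a half-plane Poisson problem $-(\pa_{rr}+\pa_{zz})\psi_1=\om^{\th}+Z(\td\psi,\chi_1)$ as in \eqref{eq:elli1}, where $Z$ collects the $\f1r\pa_r\td\psi$, $\f{1}{r^2}\td\psi$ and cutoff-commutator terms; control $Z$ in $C^{1,\al}$ by a preliminary, genuinely three-dimensional Schauder estimate for $-\D_{3D}(\td\psi\sin\vartheta)=\om^{\th}\sin\vartheta$ as in \eqref{eq:iter0}; then apply Lemma \ref{lem:BSlaw} to $\psi_1$ and iterate over a nested sequence of cutoffs $\chi_2,\chi_3,\dots$ to climb to the $X_{O,\s}^{k,\al}$ estimate for every $k$ (each step of the iteration needs the previous step to control $Z$ at one order higher). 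Without this decomposition --- or some substitute such as a from-scratch weighted boundary Schauder theory for the cylindrical operator --- your claim that the weighted elliptic estimate holds ``with no loss of derivative and with the weight exponent unchanged'' is asserted rather than proved, and it is the one genuinely new ingredient of Proposition \ref{prop:euler_bd} relative to Proposition \ref{prop:BKM_bous}. The rest of your bootstrap (transport of $ru^{\th}$ with $g=0$, then of $\om^{\th}/r$ with forcing $r^{-4}\pa_z((ru^{\th})^2)$ handled by the product rule and the support condition) matches the paper.
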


Note that the above symmetries of $\om, u^{\th}$ in $z$ are preserved, 
and $u^{\th}$ can be either odd or even in $z$. 
Since the support is bounded \eqref{eq:euler_supp}, 
the $L^{\infty}$ boundedness implies  $\om^{\th}, \na u^{\th} \in  L^q$ for $q \in [1,\infty]$.
 We will focus on the proof of Proposition \ref{prop:BKM_bous}. The proof of Proposition \ref{prop:euler_R3} is similar and simpler since there is no boundary. Thus, we omit its proof.
In Proposition \ref{prop:euler_bd}, we impose 
\eqref{eq:euler_supp} so that the solution does not touch the axis $r = 0$ and reduce the technicality. The singular solution constructed in \cite{chen2019finite2} satisfies \eqref{eq:euler_supp} up to the blowup time. See Section 9.3.5 \cite{chen2019finite2}.

\subsection{Weighted H\"older estimates}


The weight $\la x \ra_{\s}$ \eqref{eq:xwg} is increasing in $|x|$, and behaves like $|x|$ for large $|x|$. We have simple estimates for $\la x \ra_{\s}$ similar to $|x|$
\beq\label{eq:xwg}
\bal
 & |\na \la x \ra_{\s} | \les_{\s} 1, \quad \la x \ra_{\s} \leq |x|, \quad  
 | \na \la x \ra_{\s}^{l} | \les_{\s, l} \la x \ra_{\s}^{l-1}, \ l \in \R,  \\
 &  
|| \la x \ra_{\s}^{\al} ||_{\dot C^{\al}} \les 1, 
 \quad  |x| \asymp  |y|  \Rightarrow \la x \ra_{\s}  \asymp_{\s}   \la y \ra_{\s}
 \eal
\eeq
where $A \asymp B$ means that there exists absolute constant $C_1, C_2$ such that 
$C_1 A \leq B \leq C_2 A$.


We need several (weighted) H\"older estimates. Firstly, we have the following interpolation.

\begin{lem}\label{lem:interp}
Let $D = \R^d$ or $\R^2_+$. For $\s \geq 1, k, l \in \BZ_+ \cup \{ 0 \}, \al , \b \in [0, 1 )$ with $k+ \al < l + \b$, we have 
\begin{align}
|| \na^k f ||_{ \dot C^{\al}(D) }
& \les_{ k, l, \al, \b} || f ||^{1-m}_{L^{\infty}(D)}
 || \na^l f ||^m_{ \dot C^{\b}(D)}, \quad  m = \f{k+\al}{ l + \b}, \label{eq:interp_nowg}  \\
 ||  f ||_{ \dot X_{\s}^{k, \al}(D) }
& \les_{\s, k, l, \al, \b} || f ||^{1-m}_{L^{\infty}(D)}
 || f ||^m_{ X_{\s}^{l, \b}(D)  }, \quad  m = \f{k+\al}{ l + \b}  \label{eq:interp_wg} .
\end{align}
In particular, the definition of space $X_{\s}^{\infty}$ \eqref{norm:Xk} is independent of $\al \in [0, 1)$. 



\end{lem}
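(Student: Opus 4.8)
The plan is to first establish the unweighted estimate \eqref{eq:interp_nowg} by a scaling/covering argument, and then bootstrap to the weighted estimate \eqref{eq:interp_wg} by localizing to dyadic annuli on which the weight $\la x\ra_\s$ is essentially constant. For \eqref{eq:interp_nowg}, the standard approach is Gagliardo–Nirenberg-type interpolation: given a point $x_0\in D$ and a scale $r>0$, apply a Taylor expansion of $\na^k f$ around $x_0$ to order $l-k$ using $\na^l f\in\dot C^\beta$, which controls $|\na^k f(x)-P(x)|$ for $|x-x_0|\le r$ by $r^{l-k+\beta}\|\na^l f\|_{\dot C^\beta}$ where $P$ is the Taylor polynomial, while also $\|\na^k f\|_{L^\infty}$ is controlled by interpolating lower-order derivatives against $\|f\|_{L^\infty}$ and the same high-order term (this is the classical convex-interpolation inequality $\|\na^k f\|_\infty\lesssim \|f\|_\infty^{1-k/l}\|\na^l f\|_\infty^{k/l}$, extended to fractional orders). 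Combining the two and optimizing in $r$ gives the exponent $m=(k+\alpha)/(l+\beta)$. On a half-space $D=\R^2_+$ one either reflects (even/odd extension is not needed here since no symmetry is assumed, but a bounded Stein-type extension operator works) or uses boundary-adapted Taylor balls; I would invoke the extension operator to reduce to $\R^d$.

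For the weighted estimate \eqref{eq:interp_wg}, I would split $D$ into the dyadic pieces $A_j=\{2^{j}\le|x|<2^{j+1}\}$ for $j\in\Z$, together with the "inner" region $\{|x|<1\}$ handled the same way with $j\to-\infty$. On each $A_j$ the weight satisfies $\la x\ra_\s\asymp c_j$ for a constant $c_j$ depending only on $j$ and $\s$, by the last property in \eqref{eq:xwg}, and moreover $|\na\la x\ra_\s^l|\lesssim \la x\ra_\s^{l-1}$ so that derivatives falling on the weight lose only a factor $c_j^{-1}$, i.e. a factor comparable to $(\mathrm{diam}\,A_j)^{-1}$ up to constants. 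Hence on a ball $B\subset 2A_j$ the function $g\teq\la\cdot\ra_\s^{k+\alpha}\na^k f$ has, modulo harmless lower-order weight terms, Hölder seminorm comparable to $c_j^{k+\alpha}$ times the localized seminorm of $\na^k f$, and we can apply the unweighted inequality \eqref{eq:interp_nowg} \emph{on that ball at scale $\sim c_j$}: this converts $c_j^{k+\alpha}\,r^{-(k+\alpha)}$-type gains exactly into the combination $\|f\|_\infty^{1-m}(\la x\ra_\s^{l}\na^l f,\ \la x\ra_\s^{l+\beta}\na^l f\text{ in }\dot C^\beta)^{m}$ that appears in $\|f\|_{X_\s^{l,\beta}}$. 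Summing (really: taking the supremum, since Hölder norms are sup-type) over $j$ yields \eqref{eq:interp_wg}. The independence of $X_\s^\infty$ from $\alpha\in[0,1)$ is then immediate: if $f\in X_\s^{l,\beta}$ for all $l$ and one fixed $\beta$, then \eqref{eq:interp_wg} gives $f\in X_\s^{k,\alpha}$ for every $k$ and every $\alpha\in[0,1)$, since one can always pick $l$ large enough that $k+\alpha<l+\beta$.

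The main obstacle I anticipate is the bookkeeping in the weighted localization: one must verify that differentiating the \emph{non-homogeneous} weight $\la x\ra_\s$ (which interpolates between $|x|^\s$ near $0$ and $|x|$ near $\infty$) produces only lower-order terms that are genuinely absorbable, and that the scale $r\sim c_j$ chosen in the unweighted inequality is consistent with staying inside $2A_j$ (so that the local Hölder seminorm of $\na^k f$ is dominated by the global $\dot X_\s^{k,\alpha}$ norm, and vice versa). In particular the Leibniz expansion $\na^k(\la x\ra_\s^{k+\alpha}\na^k f)$ versus $\la x\ra_\s^{k+\alpha}\na^k f$ introduces cross terms $\la x\ra_\s^{k+\alpha-p}\na^{k-p}f$; these are controlled by the lower-order pieces of the $X_\s^{k,\alpha}$ norm, which is why the statement is phrased with the full norm $\|f\|_{X_\s^{l,\beta}}$ on the right and only the seminorm $\|f\|_{\dot X_\s^{k,\alpha}}$ on the left. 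Once this is organized as a clean dyadic-annulus argument, the remaining estimates are routine Gagliardo–Nirenberg bounds.
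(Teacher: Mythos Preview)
Your strategy matches the paper's: establish the unweighted interpolation \eqref{eq:interp_nowg}, then localize to a dyadic annulus on which $\la x\ra_\s$ is essentially constant and apply \eqref{eq:interp_nowg} to $f\chi_1$ for a suitable cutoff $\chi_1$. Two remarks. First, for $D=\R^2_+$ the paper does not invoke a Stein extension; it builds a one-sided mollifier $\chi_0\in C_c^\infty([-1,0])$ with vanishing moments up to order $l$, so that the convolutions defining the Littlewood--Paley-type pieces $f\ast\chi_R$ and $f\ast(\chi_{2R}-\chi_R)$ stay inside $\R^2_+$. Your extension route is a legitimate alternative. Second, and this is the one place your proposal would stumble as written: your claim that the weight-derivative factor $c_j^{-1}$ is ``comparable to $(\mathrm{diam}\,A_j)^{-1}$'' is false precisely when $\s>1$ and $|x|\to 0$, since then $\la x\ra_\s\sim|x|^\s\ll|x|$. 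The cutoff $\chi_1$ must be taken at the geometric scale $|x|\sim 2^j$, not at $c_j=\la x\ra_\s$: with $|\na^i\chi_1|\les 2^{-ij}$ the Leibniz terms in $\|\na^l(f\chi_1)\|_{\dot C^\b}$ are bounded by $2^{-j(i+\b)}\la x\ra_\s^{-(l-i)}\|f\|_{X_\s^{l,\b}}\le\la x\ra_\s^{-(l+\b)}\|f\|_{X_\s^{l,\b}}$, the last step using only the one-sided bound $\la x\ra_\s\le|x|$ from \eqref{eq:xwg}. This is exactly the computation in \eqref{eq:interp_lei1}--\eqref{eq:interp_lei2}. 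A cutoff at the smaller scale $c_j$ would fail to be identically $1$ at both endpoints when estimating the H\"older quotient for pairs $x,y$ in the same annulus with $|x-y|\sim 2^j\gg c_j$. Once the localization scale is corrected to $|x|$, the rest of your bookkeeping goes through as you describe.
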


Note that in the upper bound of \eqref{eq:interp_wg}, we use the non-homogeneous norm $X_{\s}^{l, \b}$.

\begin{proof} 

\textbf{Step 1: Unweighted estimate}

In the case of the whole space $D = \R^d$, the unweighted interpolation inequalities can be established by decomposing $f$ into the low and the high frequency part and then use Bernstein inequality \cite{bahouri2011fourier}. 
In the case of $D = \R^2_+$, we cannot define the Fourier transform. Instead, we mimic the proof. 

To estimate the lower frequency in $\na^k f(x)$ for $x \in \R^2_+ $, we want to average it in a region $R(x, r) \subset \R^2_+$. We consider a $C_c^{\infty}(\R)$ function $\chi_0$ supported in $[-1, 0]$ with 
\[
  \int_{\R} \chi_0 = 1, \quad  \int_{\R} \chi_0(y) y^i d y =0, \quad i =0,1,2.., l.
\]
Consider $\td \chi_0(y) \teq 2  \chi_0(2 y) - \chi_0(y) $. The above constraints imply that 
$\int_{\R}  \td \chi_0 y^i = 0,  0\leq i \leq l $.
Thus, we have
$\td \chi_0(y) = \pa_y^l F$ for 
\[
F(z) = \int_{-1}^{z} \int_{-1}^{x_2} ... \int_{-1}^{x_l} \td \chi_0( y) dy ,
\]
and $F \in C_c^{\infty}$ and is supported in $[-1, 0]$. Define  
\[
\chi(x, y) \teq \chi_0(x) \chi_0(y), \quad  \chi_{L}(x) \teq L^2 \chi(L x), 
\quad  \chi_{0, L}(x) = L \chi_0(L x), 
\quad F_L(x) \teq F( L x).
\]

We have 
\[
\bal
 \chi_{2L} - \chi_L 
& =  (\chi_{0, 2L}(x) - \chi_{0, L}(x))  \chi_{0, 2L}(y) 
+ (\chi_{0, 2L}(y) - \chi_{0, L}(y) )  \chi_{0, L}(x)  \\
& = L^{-l} \pa_x^l (  F_L(x)  \chi_{0, 2 L}(y) ) 
+ L^{-l} \pa_y^l (  \chi_{0, L}(x)  F_L(y) ).
\eal
\]

We first consider the case of $\al = 0$. For $R$ to be chosen, we decompose $\na^k f $ as follows \[
  \na^k f = \na^k f \ast \chi_R 
  + \sum_{i \geq 0} \na^k f \ast ( \chi_{2^{i+1} R} - \chi_{ 2^i R} )
  = \na^k f \ast \chi_R 
  + \sum_{i \geq 0} \na^k f \ast 
  ( \chi_{2^{i+1} R} - \chi_{ 2^i R} ) \teq I_{low} + \sum_{i \geq 0} I_i
\]

Formally, the above decomposition projects $\na^k f$ into the low frequency $|\xi| \les R$, and 
$ |\xi| \asymp 2^i R$. For $x \in \R^2_+$, since $\chi(z)$ is supported in $z_i \leq 0 $, the support of the integrand in $ \int \na^k f(y) \chi_L(x-y) d y$ is in $ y_i \geq x_i$ and in $\R^2_+$. 

Using Young's inequality and integration by parts, we can obtain
\[
 |I_{low}|  = |f \ast \na^k \chi_R| \les R^k || f ||_{L^{\infty}}, 
 \quad |I_i | \les (2^i R)^{-l - \b + k } || f ||_{\dot C^{l, \b}}.
\] 

Summing the above estimates and choosing $R = ( \f{ || f ||_{\dot C^{l, \b}}}{ || f ||_{L^{\infty}} })^{1 / (l+ \b)}$ complete the estimates.

For the case of $\al>0$, we have the simple interpolation 
\beq\label{eq:interp_unwg1}
 || \na^k f ||_{\dot C^{\al}} 
 \les  || \na^k f ||_{L^{\infty}}^{1-  \f{\al}{\g} } || \na^k f ||_{\dot C^{\g}}^{ \f{\al}{\g} }
\eeq
for $ \al < \g \leq 1$, where we treat $|| f ||_{\dot C^1} = || \na f ||_{L^{\infty}}$. If $ k < l$, we apply the above interpolation with $\g = 1$ and the $L^{\infty}$ estimates 
\[
 ||\na^k f ||_{L^{\infty}} \les || f ||^{1-m_1}_{L^{\infty}} || f ||_{\dot C^{l, \b}}^{m_1}, \quad 
 ||\na^{k+1} f ||_{L^{\infty}} \les || f ||^{1-m_2}_{L^{\infty}} || f ||_{\dot C^{l, \b}}^{m_2} , \quad 
 m_1 = \f{k}{l+\b},  \ m_2 = \f{k+1}{l + \b}
 \]
 to obtain the desired estimate \eqref{eq:interp_nowg}. If $k = l$, then we have $\al < \b$. 
 Applying \eqref{eq:interp_unwg1}  with $\g = \b $ and the above estimate for $ ||\na^k f ||_{L^{\infty}}$, we prove \eqref{eq:interp_nowg}.

\vspace{0.1in}
\textbf{Step 2: Weighted estimates}

Next, we consider the weighted interpolation. Denote $g= \la x\ra_{\s}^{k+\al} \na^k f$. We first consider the case of $\al = 0$. Let $\chi \in C_c^{\infty}(\R), \chi:  \R \to [0, 1]$ 
with $\chi(r) = 0$ for $r \leq 1$ and $\chi(r) = 1$ for $r \geq 2$. We assume that $|x| \in [2^n, 2^{n+1}]$ and denote $\chi_1(|x|) = \chi( \f{|x|}{2^{n-2}})$. We have 
\beq\label{eq:g_wg}
\chi_1(|x|) = \chi( \f{|x|}{2^{n-2}}), \quad  g = \la x\ra_{\s}^k \na^k (\chi_1(|x| ) f ) ,
\eeq
and $g$ is supported away from $x=0$. Below, the implicit constants can depend on $\s, k, l ,\al, \b$.

Using \eqref{eq:interp_nowg}, we yield 
\beq\label{eq:interp_lei0}
  |g | \les \la x\ra_{\s}^k || \na^l ( f \chi_1) ||_{ \dot C^{\b}}^{m} || f \chi_1 ||^{1-m}_{L^{\infty}}
  \les  \la x\ra_{\s}^k || \na^l ( f \chi_1) ||_{ \dot C^{\b}}^{m} || f  ||^{1-m}_{L^{\infty}},
  \quad m = \f{k}{l+ \b}. 
\eeq

Using Leibniz's rule, we get 
\beq\label{eq:interp_lei1}
\bal
|| \na^l( f \chi_1) ||_{ \dot C^{\b}}
& \les \sum_{i\leq l} ||  \la x\ra_{\s}^{i-l -\b} \na^i \chi_1 \cdot  \la x\ra_{\s}^{l-i + \b}\na^{l-i} f ||_{ \dot C^{\b}} \\
 \eal
\eeq

For H\"older estimate with $y, z \in \supp(\chi_1)$, we consider $|x| \les |y| \leq |z|$. Using decomposition 
\[
\bal
& | \la y \ra_{\s}^{-a}  pq(y) -  \la z \ra_{\s}^{-a}  pq(z) | 
 \leq | \la z \ra_{\s}^{-a}  p(z) (q(y) -q(z)) |   + |q(y) p(y) ( \la y \ra_{\s}^{-a} - \la z \ra_{\s}^{-a})| \\
& \qquad  + | q(y) (p(y) - p(z))  \la z \ra_{\s}^{-a} | \teq I_1 + I_2 + I_3 , \\
 &  a = -( i - l - \b) > 0, \quad p = \na^i \chi_1,  \quad  q(s) = \la s \ra_{\s}^{l-i + \b} \na^{l-i } f, 
\eal
\]
 $ \chi_1 =  \chi( x / 2^{n-1})$,  \eqref{eq:xwg}, $ \la x \ra \leq |x| \asymp 2^n$, and that $\la x \ra_{\s}$ is increasing, we get 
\[
\bal
& |I_1| 
\les |y-z|^{\b} \la x \ra_{\s}^{-a} 2^{- i n} 
|| f ||_{X_{\s}^{l, \b}},  \quad 
|I_2 | \les |y-z|^{\b}  \la y \ra_{\s}^{-a - \b} 
2^{- n i} \cdot \la y \ra_{\s}^{\b} || f ||_{X_{\s}^{l, \b}} \\
&  |I_3| \les |y-z|^{\b} 2^{ - (i+\b) n}
\la y \ra_{\s}^{\b} 
\la z \ra_{\s}^{-a}
\les |y-z|^{\b} 2^{- (i+\b) n} \la y \ra_{\s}^{-a + \b},
\eal
\]
where we have used $|q(y)| \leq \la y \ra_{\s}^{\b} || \la s \ra_{\s}^{l-i} \na^{l-i} f ||_{L^{\infty}}\les \la y \ra_{\s}^{\b} || f ||_{X_{\s}^{l, \b}}$. 
Using $|x| \les |y|, -a + \b \leq 0,  |x| \asymp 2^n, \la x \ra_{\s} \leq |x| \les 2^n$ \eqref{eq:xwg}, $i+ a = l + \b$, and combining the above estimates, we establish 
\beq\label{eq:interp_lei2}
\bal
I_1 + I_2 + I_3 
& \les |y-z|^{\b} || f ||_{X_{\s}^{l, \b}} ( \la x \ra_{\s}^{-a} \la x \ra_{\s}^{-i}
 + \la x \ra_{\s}^{-a} \la x \ra_{\s}^{-i}
 + \la x \ra_{\s}^{-a + \b} \la x \ra_{\s}^{-(i+\b)} )  \\ 
 &  \les |y-z|^{\b} || f ||_{X_{\s}^{l, \b}} \la x \ra_{\s}^{- i - a} 
 \les |y-z|^{\b} || f ||_{X_{\s}^{l, \b}} \la x \ra_{\s}^{- l  - \b},   \\
  || \na^l( f \chi_1) ||_{ \dot C^{\b}}
& \les 
  || f ||_{X_{\s}^{l, \b}} \la x \ra_{\s}^{- l  - \b} .
\eal
\eeq


Combining the above estimates, we prove the desired estimate \eqref{eq:interp_wg} with $\al = 0$. 

Denote $g = \la x\ra_{\s}^{k+\al} \na^k (\chi_1(|x| ) f )$. For $\al > 0$, we estimate the H\"older norm $\f{|g(x) - g(y)|}{|x-y|^{\al}}$ with $|x| \leq |y|$. 
We consider two cases. If $|y| \geq 2 |x|$, using $|x-y|^{\al} \gtr |y|^{\al} 
\gtr |x|^{\al} + |y|^{\al} $, $\la x \ra_{\s} \leq |x|$, and  
\eqref{eq:interp_wg} with $\al = 0$, we yield 
\[
 \f{ |g(x) -g(y)|}{|x-y|^{\al}}
\les \max\B( \f{|g(x)|}{|x|^{\al}},  \f{|g(y)|}{|y|^{\al}} \B)
\les || \la x\ra_{\s}^k \na^k f ||_{L^{\infty}}
\les || f||_{L^{\infty}}^{ 1-m_1}
|| f||_{X_{\s}^{l,\b}}^{m_1}, \quad m_1 = \f{k}{ l+ \b}. 
\]
Since $m_1 < m= \f{k+\al}{l+\b}$ and $|| f||_{L^{\infty}}
\leq || f||_{X_{\s}^{l,\b}}$, we can further bound the upper bound by $|| f||_{L^{\infty}}^{ 1-m}|| f||_{X_{\s}^{l,\b}}^{m}$. 

If $|y| \in [|x|, 2 |x|]$, we assume that $|x| \in [2^n, 2^{n+1}]$. We introduce the  same cutoff function $\chi_1$. 
Using \eqref{eq:xwg}, we have 
\[
\bal
 I & \teq \f{ |g(x) - g(y)|}{|x-y|^{\al}}
= \f{  | \  \la x\ra_{\s}^{k+\al} \na^k( f \chi_1)(x) -\la y\ra_{\s}^{k+\al} \na^k( f \chi_1)(y)  |  }{|x-y|^{\al}} \\
& \les \la x\ra_{\s}^k || \na^k( f \chi_1)(x) ||_{L^{\infty }}
+ \la x\ra_{\s}^{k+\al} || \na^k( f \chi_1)(x)  ||_{\dot C^{\al}} .
\eal
\]

Using interpolation \eqref{eq:interp_nowg} and \eqref{eq:interp_lei0}-\eqref{eq:interp_lei2}, we get 
\beq\label{eq:interp_lei3}
\bal
|| \na^k( f \chi_1)(x) ||_{L^{\infty }} 
& \les  
 || f ||_{L^{\infty}}^{1- m_1} || \na^l ( f \chi_1) ||_{ \dot C^{\b} }^{m_1} \les 
 || f ||_{L^{\infty}}^{1- m_1}  ( \la x \ra_{\s}^{- (l + \b)} || f ||_{X_{\s}^{l, \b}} )^{m_1} 
\les \la x \ra_{\s}^{ -  k} || f ||_{L^{\infty}}^{1- m}   || f ||_{X_{\s}^{l, \b}} ^{m},  \\
|| \na^k( f \chi_1)(x) ||_{ \dot C^{\al}} & \les
 || f ||_{L^{\infty}}^{1- m} || \na^l ( f \chi_1) ||_{ \dot C^{\b} }^m \les 
 || f ||_{L^{\infty}}^{1- m}  ( \la x \ra_{\s}^{- (l+ \b)} || f ||_{X_{\s}^{l, \b}} )^{m} 
\les \la x \ra_{\s}^{- (k + \al)}
|| f ||_{L^{\infty}}^{1- m}   || f ||_{X_{\s}^{l, \b}}^{m} ,
\eal
\eeq
where $m_1 = \f{k}{ l + \b} < m = \f{k+\al}{l + \b}$, and we have used $|| f ||_{L^{\infty}} \leq || f||_{X_{\s}^{l ,\b}}, n(l+\b) \cdot m_1 = n k, n(l+\b) m =n(k+\al)$. Combining the above estimates,
we establish \eqref{eq:interp_wg}. 
\end{proof}

We have the following product rule. 
\begin{lem}\label{lem:prod}
For $\s \geq 1, k \in \BZ_+ \cup \{ 0 \}, \al \in [ 0, 1)$, we have
\[
 || \la x\ra_{\s}^{k+\al} \na^k ( f g) ||_{\dot C^{\al}}
\les_{\s, k, \al} || f ||_{L^{\infty}} || g ||_{X_{\s}^{k, \al}}
+  || g ||_{L^{\infty}} || f ||_{X_{\s}^{k, \al}}.
\]
\end{lem}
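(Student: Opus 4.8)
The plan is to expand $\na^k(fg)$ by the Leibniz rule and to bound each summand $\la x\ra_\s^{k+\al}\,\na^i f\otimes\na^{k-i}g$, $0\le i\le k$, on its own, distributing the weight as $\la x\ra_\s^{k+\al}=\la x\ra_\s^{i}\cdot\la x\ra_\s^{k-i+\al}$ and setting $P_i:=\la x\ra_\s^{i}\na^i f$, $Q_i:=\la x\ra_\s^{k-i+\al}\na^{k-i}g$ (the whole-space and half-space cases are handled identically, and the case $\al=0$ is purely pointwise Leibniz plus interpolation, so assume $\al\in(0,1)$). Here $P_i$ is globally bounded, $||P_i||_{L^\infty}\le ||f||_{X_\s^{k,\al}}$, while $Q_i$ has controlled seminorm, $||Q_i||_{\dot C^\al}=||g||_{\dot X_\s^{k-i,\al}}\le ||g||_{X_\s^{k,\al}}$, but need not be bounded. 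Because of this asymmetry one cannot simply multiply the standard unweighted Leibniz--H\"older inequality by $\la x\ra_\s^{k+\al}$ (that produces a spurious $||f||_{X_\s^{k,\al}}||g||_{X_\s^{k,\al}}$); the weight is split term by term and the seminorm of $P_iQ_i$ is estimated from the pointwise identity
\[
P_iQ_i(x)-P_iQ_i(y)=P_i(x)\big(Q_i(x)-Q_i(y)\big)+Q_i(y)\big(P_i(x)-P_i(y)\big).
\]

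The first piece contributes $\le ||P_i||_{L^\infty}||Q_i||_{\dot C^\al}$. By the weighted interpolation inequality \eqref{eq:interp_wg}, $||P_i||_{L^\infty}=||f||_{\dot X_\s^{i,0}}\lesssim ||f||_{L^\infty}^{1-m}||f||_{X_\s^{k,\al}}^{m}$ with $m=\tfrac{i}{k+\al}$, and $||Q_i||_{\dot C^\al}=||g||_{\dot X_\s^{k-i,\al}}\lesssim ||g||_{L^\infty}^{m}||g||_{X_\s^{k,\al}}^{1-m}$; the exponents are complementary because the $f$-factor carries $i$ and the $g$-factor carries $k-i+\al$ units out of the total $k+\al$. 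Thus the product equals $\big(||f||_{L^\infty}||g||_{X_\s^{k,\al}}\big)^{1-m}\big(||g||_{L^\infty}||f||_{X_\s^{k,\al}}\big)^{m}$, and Young's inequality bounds it by the right-hand side of the lemma.

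For the second piece, $|Q_i(y)|\,|P_i(x)-P_i(y)|/|x-y|^\al$ with $|x|\le|y|$, split into the off-diagonal regime $|x-y|>\tfrac12|y|$ and the near-diagonal regime $|x-y|\le\tfrac12|y|$. Off-diagonal, $|x-y|\asymp|y|$, so $|P_i(x)-P_i(y)|/|x-y|^\al\lesssim\la y\ra_\s^{-\al}||P_i||_{L^\infty}$ (using $\la y\ra_\s\le|y|$) while $|Q_i(y)|\le\la y\ra_\s^{\al}\,||\la\cdot\ra_\s^{k-i}\na^{k-i}g||_{L^\infty}$; the $\la y\ra_\s^{\pm\al}$ cancel and one is left with a product of two weighted $L^\infty$ norms, disposed of by interpolation and Young as above (here the exponents sum to $\tfrac{k}{k+\al}<1$, and the leftover factor $\big(||f||_{L^\infty}||g||_{L^\infty}\big)^{\al/(k+\al)}$ is absorbed via $||f||_{L^\infty}\le ||f||_{X_\s^{k,\al}}$). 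Near-diagonal, on $B:=B(y,|y|/2)\cap D$ one has $\la\cdot\ra_\s\asymp\la y\ra_\s$; writing $P_i=\la\cdot\ra_\s^{-\al}\big(\la\cdot\ra_\s^{i+\al}\na^i f\big)$ and expanding the difference of this product, $|P_i(x)-P_i(y)|/|x-y|^\al$ is reduced to $\la y\ra_\s^{-\al}||\la\cdot\ra_\s^{i+\al}\na^i f||_{\dot C^\al(B)}+||\la\cdot\ra_\s^{-\al}||_{\dot C^\al(B)}\,||\la\cdot\ra_\s^{i+\al}\na^i f||_{L^\infty(B)}$. The crucial input is the sharpened weight bound $|\na^j\la x\ra_\s^a|\lesssim_{\s,a,j}\la x\ra_\s^a|x|^{-j}$ (immediate from the explicit form of $\la x\ra_\s$), which gives $||\la\cdot\ra_\s^a||_{\dot C^\al(B)}\lesssim\la y\ra_\s^{a-\al}$ with a constant uniform down to the origin; combined with $||\la\cdot\ra_\s^{i+\al}\na^i f||_{L^\infty(B)}\lesssim\la y\ra_\s^{\al}||\la\cdot\ra_\s^{i}\na^i f||_{L^\infty(B)}$, the second piece collapses to $\la y\ra_\s^{-\al}\big(||f||_{\dot X_\s^{i,\al}}+||f||_{\dot X_\s^{i,0}}\big)$ times $|Q_i(y)|\le\la y\ra_\s^{\al}||\la\cdot\ra_\s^{k-i}\na^{k-i}g||_{L^\infty}$, and the now-complementary exponents $\tfrac{i+\al}{k+\al}+\tfrac{k-i}{k+\al}=1$ let interpolation and Young finish exactly as before.

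The main obstacle, dictating the entire structure, is the endpoint terms $i=0$ and $i=k$, i.e.\ $f\,\na^k g$ and $g\,\na^k f$: one factor then carries no derivatives and the only control on the other is the $\dot C^\al$ seminorm of $\la\cdot\ra_\s^{k+\al}\na^k(\cdot)$, which need not be finite in $L^\infty$ — this is what forces the two-sided pointwise decomposition and the dyadic near/off-diagonal splitting throughout, rather than a naive application of the unweighted product rule. It is also at these terms that the non-homogeneity of $\la x\ra_\s$ for $\s>1$ threatens uncontrolled powers of $|x|/\la x\ra_\s$ as $x\to0$, which is precisely why the refined gradient estimate $|\na\la x\ra_\s^a|\lesssim\la x\ra_\s^a|x|^{-1}$ — strictly sharper, for small $|x|$, than the homogeneous-looking $\la x\ra_\s^{a-1}$ of \eqref{eq:xwg} — is the one technical fact that must be verified with care; once it is available, every constant above depends only on $\s,k,\al$, which completes the proof.
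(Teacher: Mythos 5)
Your proof is correct, and its skeleton --- Leibniz's rule, a pointwise product decomposition of the weighted H\"older quotient, the weighted interpolation of Lemma \ref{lem:interp} with complementary exponents $\f{i}{k+\al}$ and $\f{k-i+\al}{k+\al}$, and Young's inequality --- coincides with the paper's. The genuine difference is the way the weight is distributed, and it changes the amount of work. The paper writes each Leibniz summand as $\la x\ra_{\s}^{\al}\cdot p\cdot q$ with $p=\la x\ra_{\s}^{i}\na^i f$ and $q=\la x\ra_{\s}^{k-i}\na^{k-i}g$, so that \emph{both} function factors are bounded by $X_{\s}^{k,\al}$-quantities, and then uses a three-term telescoping identity in which the only term involving a difference of weights is $(\la y\ra_{\s}^{\al}-\la x\ra_{\s}^{\al})p(y)q(x)$, disposed of at once by the global bound $\|\la x\ra_{\s}^{\al}\|_{\dot C^{\al}}\les 1$ recorded in \eqref{eq:xwg}. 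You instead attach the full exponent $k-i+\al$ to the $g$-factor, which makes $Q_i$ unbounded and forces the off-diagonal/near-diagonal case analysis together with the sharpened gradient estimate $|\na\la x\ra_{\s}^{a}|\les_{\s,a}\la x\ra_{\s}^{a}|x|^{-1}$; that estimate is true (and is in effect what underlies \eqref{eq:xwg} as well), but your near-diagonal step essentially re-derives a localized version of the weight-H\"older bound that the paper simply quotes. Both routes close: your exponent bookkeeping in each regime is consistent (in particular the off-diagonal leftover $(\|f\|_{L^{\infty}}\|g\|_{L^{\infty}})^{\al/(k+\al)}$ is correctly absorbed using $\|g\|_{L^{\infty}}\le\|g\|_{X_{\s}^{k,\al}}$, restoring complementary exponents before Young). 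What the paper's splitting buys is brevity --- no dyadic case distinction is needed because no unbounded factor is ever created; what yours buys is a slightly more self-contained treatment of the weight, at the cost of the extra localization argument.
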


\begin{proof}
Below, the implicit constants can depend on $\s, k, \al$. Using a direct calculation yields 
\[ || \la x \ra_{\s}^{k+\al} \na^k ( f g) ||_{ \dot C^{\al}}
\les \sum_{i \leq k} || \la x\ra_{\s}^{\al}\cdot \la x\ra_{\s}^i \na^i f \cdot  \la x\ra_{\s}^{k-i} \na^{k-i} g  
||_{ \dot C^{\al}} .
\]
Using the following identity with $p = \la x \ra_{\s}^i \na^i f, q = \la x \ra_{\s}^{k-i} \na^{k-i} g $, and triangle inequality, we yield 
\[
\bal
& \la x \ra_{\s}^{\al}  pq(x) - \la y \ra_{\s}^{\al} p q(y)
= (\la x \ra_{\s}^{\al} p(x)   - \la y \ra_{\s}^{\al} p(y)) q(x)
+ ( \la y \ra_{\s}^{\al} - \la x \ra_{\s}^{\al}) p(y) q(x)
+ (\la x \ra_{\s}^{\al} q(x) - \la y \ra_{\s}^{\al} q(y)) p(y), \\
& || \la x \ra_{\s}^{\al}\cdot \la x \ra_{\s}^i \na^i f \cdot  \la x \ra_{\s}^{k-i} \na^{k-i} g  
||_{ \dot C^{\al}}
 \les 
|| \ \la x \ra_{\s}^{i} \na^i f ||_{L^{\infty}} 
|| \ \la x \ra_{\s}^{k-i+\al} \na^{k-i} g   ||_{ \dot C^{\al}} \\
& \qquad  \qquad + || \ \la x \ra_{\s}^{i+\al} \na^i f ||_{\dot C^{\al} } 
|| \ \la x \ra_{\s}^{k-i} \na^{k-i} g   ||_{L^{\infty}}  
+  || \la x \ra_{\s}^{i} \na^i f ||_{L^{\infty}} || \la x \ra_{\s}^{k-i} \na^{k-i} g   ||_{L^{\infty}}
\teq I_1 + I_2 + I_3. 
\eal
\]


Let $p_i = \f{i}{k+\al}, q_i = \f{ i+\al}{k+\al}$. By definition, we get $p_i + q_{k-i} = 1$. Using Lemma \ref{lem:interp} and Young's inequality, we get 
\[
\bal
I_1 & \les 
|| f ||_{L^{\infty}}^{1 - p_i} 
|| f ||_{X_{\s}^{k, \al}}^{p_i}
 || g ||_{L^{\infty}}^{1 - q_{k-i}}  || g ||_{X_{\s}^{ k , \al} }^{q_{k-i}}
 = || f ||_{L^{\infty}}^{1 - p_i} 
|| f ||_{X_{\s}^{k, \al}}^{p_i}
 || g ||_{L^{\infty}}^{ p_i }  || g ||_{X_{\s}^{ k , \al} }^{1 - p_i} \\
 & \les || f ||_{L^{\infty}} 
 || g ||_{X_{\s}^{k, \al}} + || g ||_{L^{\infty}} 
 || f ||_{X_{\s}^{k, \al}} .
 \eal
\]
The estimates of $I_2, I_3$ and other terms are similar. We complete the proof.
\end{proof}

\subsection{Weighted estimates of the velocity}
Next, we estimate the velocity $\uu = \na^{\perp}(-\D)^{-1} \om$ in $\R_2^+$. Denote by $\psi(f)$ the stream function associated with $f \in \R^2_+$
\[
- \D \psi = f, \quad \psi(x, 0) = 0.
\]
Let $F$ be the odd extension of $f$ from $\R^2_+$ to $\R^2$. We have
\beq\label{eq:BSlaw_form}
\bal
& \pa_x^i \pa_y^j(-\D)^{-1} \psi 
=  c_{ij} f + C_{ij} P.V. \int K_{ij}(x- y) F(y) d y, \quad i+j=2, \\
& K_{11} = \f{y_1 y_2}{|y|^4}, \quad K_{20} = K_{02} = \f{y_1^2 - y_2^2}{|y|^4}, \quad
c_{11} = 0,  
\eal
\eeq
for some universal constants $c_{ij}, C_{ij}$. 
The following estimates are standard 

\begin{lem}\label{lem:BSlaw0}
For $i+j = 2$, $\al \in (0, 1)$, $q \in [1, \infty)$, and $R >0$, we have
\[
\bal
  & || \pa_x^i \pa_y^j (-\D)^{-1} f ||_{\dot C^{\al}}
  \les_{\al} || f ||_{\dot C^{\al}}, \\
  &  || \pa_x^i \pa_y^j (-\D)^{-1} f ||_{L^{\infty}}
\les_{\al }   || f ||_{\dot C^{\al}} R^{\al}
+ | \int_{|y|_{\infty} \geq R} K_{ij}( y) F(x- y)  d y | 
\les_{\al, q} || f ||_{\dot C^{\al}} + || f ||_{L^q}.
\eal
\]
\end{lem}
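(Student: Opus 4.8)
The plan is to transfer the half-plane estimate to $\R^2$ by odd reflection and then invoke the classical Calder\'on--Zygmund theory for second derivatives of the Newtonian potential. First I would note that, since $\psi(f)$ vanishes on the boundary, its odd extension $\tilde\psi$ across the boundary solves $-\D\tilde\psi=F$ on $\R^2$, so that $\pa_x^i\pa_y^j(-\D)^{-1}f$ on $\R^2_+$ is the restriction of $c_{ij}F+C_{ij}\,\mathrm{P.V.}\!\int K_{ij}(\cdot-y)F(y)\,dy$ to $\R^2_+$; this is \eqref{eq:BSlaw_form}, and is justified because each $K_{ij}$ is homogeneous of degree $-2$, smooth off the origin, and has vanishing mean on circles. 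For the $f$ in our applications (which are odd, hence vanish on the boundary line), the odd extension preserves the relevant norms, $\|F\|_{\dot C^\al(\R^2)}\les\|f\|_{\dot C^\al(\R^2_+)}$ and $\|F\|_{L^p(\R^2)}\asymp\|f\|_{L^p(\R^2_+)}$ for $p\in[1,\infty]$, so it suffices to bound the Calder\'on--Zygmund operator $T_{ij}F\teq\mathrm{P.V.}\!\int K_{ij}(\cdot-y)F(y)\,dy$ on $\R^2$ and to control the local term $c_{ij}f$ by $\|f\|_{L^\infty}$. (Alternatively one could avoid the reflection and cite interior/boundary Schauder estimates directly.)

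For the H\"older bound I would run the textbook Schauder argument. Given $x\ne x'$ with $r=|x-x'|$, split $\R^2=B(x,3r)\cup B(x,3r)^c$. On the ball I would use $\int_{\{a<|z|<b\}}K_{ij}=0$ to insert $F-F(x)$ (resp.\ $F-F(x')$ in the $T_{ij}F(x')$ term) and integrate $|K_{ij}(z)|\,|z|^\al\les|z|^{\al-2}$, which is integrable at $0$ since $\al>0$ and gives $\les r^\al\|F\|_{\dot C^\al}$. On the complement I would subtract $F(x)$ again and combine the mean value theorem with $|\na K_{ij}(z)|\les|z|^{-3}$ to get $|K_{ij}(x-y)-K_{ij}(x'-y)|\,|F(y)-F(x)|\les r\,|x-y|^{\al-3}$; the integral over $|x-y|\ge 3r$ converges precisely because $\al<1$, and gives $\les r^\al\|F\|_{\dot C^\al}$. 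Adding $\|c_{ij}f\|_{\dot C^\al}\le|c_{ij}|\,\|f\|_{\dot C^\al}$ yields the first inequality.

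For the $L^\infty$ bound I would fix $x$ and split the kernel integral at $|y|_\infty=R$. On $\{|y|_\infty<R\}$ the key point is the cancellation $\int_{\{|y|_\infty<R\}}K_{ij}(y)\,dy=0$, which I would derive not from the circular mean-zero property but from the reflection symmetries ($K_{11}$ is odd under $y_1\mapsto-y_1$, and $K_{20}=K_{02}$ is odd under the swap $(y_1,y_2)\mapsto(y_2,y_1)$) together with the invariance of the square under those maps; the principal value over the square then equals $\int_{\{|y|_\infty<R\}}K_{ij}(y)(F(x-y)-F(x))\,dy$, which is bounded by $\int_{|y|<\sqrt{2}R}|y|^{\al-2}\,dy\cdot\|F\|_{\dot C^\al}\les R^\al\|f\|_{\dot C^\al}$. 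Together with the outer piece $\int_{\{|y|_\infty\ge R\}}K_{ij}(y)F(x-y)\,dy$ and the local term $c_{ij}f$ (bounded by $\|f\|_{L^\infty}$), this gives the first $\les$. For the outer piece I would use H\"older's inequality, $|\int_{\{|y|_\infty\ge R\}}K_{ij}(y)F(x-y)\,dy|\le\|F\|_{L^q}\,\|K_{ij}\one_{\{|y|_\infty\ge R\}}\|_{L^{q'}}$; since $|K_{ij}|\les|y|^{-2}$ and $q<\infty$ forces $q'>1$, the $L^{q'}$ norm is finite (a constant depending on $R,q$), so the outer piece is $\les_{q,R}\|f\|_{L^q}$. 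Finally, the elementary embedding $\dot C^\al\cap L^q\hookrightarrow L^\infty$ for $\al>0,q<\infty$ (bound $f$ at a point by its average over a ball of radius $\rho$ and optimize in $\rho$) controls $\|f\|_{L^\infty}$, and taking e.g.\ $R=1$ produces $\les_{\al,q}\|f\|_{\dot C^\al}+\|f\|_{L^q}$.

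Since this is a standard lemma I do not expect a genuine obstacle; the single point that needs a little care is precisely the cancellation $\int_{\{|y|_\infty<R\}}K_{ij}=0$ over a square rather than a ball---it is this square-cancellation that legitimizes the sharp cut-off at $|y|_\infty=R$ appearing in the statement. The remaining care is just bookkeeping of where the hypotheses are used: $\al\in(0,1)$ for convergence of the two Schauder integrals, and $q<\infty$ so that the truncated kernel $K_{ij}\one_{\{|y|_\infty\ge R\}}$ lies in $L^{q'}$.
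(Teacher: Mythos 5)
Your proof is correct, and it supplies exactly the standard Calder\'on--Zygmund/Schauder argument that the paper invokes without proof (the lemma is stated there with only the remark that ``the following estimates are standard''). You rightly isolate the one point needing care, namely the cancellation $\int_{\{|y|_\infty<R\}}K_{ij}=0$ over the \emph{square}, which your symmetry argument (oddness of $K_{11}$ under $y_1\mapsto -y_1$ and of $K_{20}$ under the coordinate swap) handles correctly; the only cosmetic caveat is that the intermediate bound as stated in the lemma should be read as applying to the singular-integral part, since the local term $c_{ij}f$ contributes $\|f\|_{L^\infty}$, which you correctly absorb into the final bound via the embedding $\dot C^{\al}\cap L^q\hookrightarrow L^{\infty}$.
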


Next, we develop the weighted estimate for the velocity. 
\begin{lem}\label{lem:BSlaw}
For $\s \geq 1, k \in \BZ_+ \cup \{ 0 \}, \al \in  ( 0, 1), q \in [1, \infty) $, we have
\[
 || \ \la x \ra_{\s}^{k+\al} \na^{k+1}   \uu ||_{  \dot C^{\al}}
 +  || \la x \ra_{\s}^{k}  \na^{k+1} \uu ||_{ L^{\infty}}
 \les_{\s, k, \al, q} || \om ||_{ X_{\s}^{k, \al}} +  || \om ||_{L^q} + || \om ||_{\dot C^{\al}} 
\]
\end{lem}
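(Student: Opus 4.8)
The plan is to reduce the weighted estimate for $\uu = \na^\perp(-\D)^{-1}\om$ to the weighted product rule (Lemma~\ref{lem:prod}), the interpolation inequality (Lemma~\ref{lem:interp}), and the standard (unweighted) elliptic/Calder\'on–Zygmund bounds of Lemma~\ref{lem:BSlaw0}. Since $\na^{k+1}\uu = \na^k(\na\uu)$ and $\na\uu$ is a finite linear combination of second derivatives $\pa_x^i\pa_y^j(-\D)^{-1}\om$ with $i+j=2$ together with (for the diagonal entries) a constant multiple of $\om$ itself, it suffices to estimate $\la x\ra_\s^{k+\al}\na^k T\om$ in $\dot C^\al$ and $\la x\ra_\s^k\na^k T\om$ in $L^\infty$, where $T$ is any of the operators $\pa_x^i\pa_y^j(-\D)^{-1}$, $i+j=2$; the contribution of the $c_{ij}\om$ term is immediate from $\|\om\|_{X_\s^{k,\al}}$.

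First I would handle the region away from the origin using a dyadic cutoff, exactly as in the proof of Lemma~\ref{lem:interp}, Step 2: fix $n$ with $|x|\in[2^n,2^{n+1}]$, let $\chi_1(|x|)=\chi(|x|/2^{n-2})$ be the cutoff supported in $|x|\gtrsim 2^n$ and equal to $1$ near $x$, and split $\om = \chi_1\om + (1-\chi_1)\om$. For the local piece $\chi_1\om$, the operator $T$ is bounded on $\dot C^\al$ (Lemma~\ref{lem:BSlaw0}), so $\|\na^k T(\chi_1\om)\|_{\dot C^\al(B(x,2^{n-3}))}\lesssim \|\na^k(\chi_1\om)\|_{\dot C^\al}$, and then the weighted Leibniz estimate \eqref{eq:interp_lei1}–\eqref{eq:interp_lei2} gives $\la x\ra_\s^{k+\al}\|\na^k(\chi_1\om)\|_{\dot C^\al}\lesssim \|\om\|_{X_\s^{k,\al}}$ (one also needs the $L^\infty$ version, which follows from the interpolation \eqref{eq:interp_lei3}). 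For the far piece $(1-\chi_1)\om$, which is supported at distance $\gtrsim 2^n$ from $x$, the kernel $\na^k K_{ij}(x-\cdot)$ is smooth there and decays like $|x-y|^{-2-k}$; testing against $(1-\chi_1)\om$ and using $\|\om\|_{L^q}$ (Hölder in the annuli) plus $\|\om\|_{\dot C^\al}$ or $\|\om\|_{L^\infty}$ to control the near-boundary annulus $|x-y|\asymp 2^n$ bounds $\la x\ra_\s^{k+\al}\na^k T((1-\chi_1)\om)$ by $\|\om\|_{X_\s^{k,\al}} + \|\om\|_{L^q} + \|\om\|_{\dot C^\al}$; the Hölder seminorm of this far piece is even better behaved because differentiating the kernel only improves decay, and the two evaluation points $x,x'$ with $|x-x'|\le 2^{n-3}$ see essentially the same annular decomposition. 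Summing over the (finitely overlapping) dyadic shells in $n$ — or rather taking the supremum, since the $X_\s^{k,\al}$ norm is an $L^\infty$-type norm in $n$ — yields the claimed bound on all of $\R^2_+$.

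The main obstacle I expect is bookkeeping the two competing weight behaviors: for $|x|$ large $\la x\ra_\s\asymp|x|$, so the estimate is scale-invariant and the dyadic argument is clean, but for $|x|$ small $\la x\ra_\s\asymp|x|^\s$ with $\s\ge 1$, so the weight is much smaller than $|x|$ and one must make sure the near-origin contribution of the nonlocal kernel does not overwhelm it. The key point is that near $x=0$ the weight $\la x\ra_\s^{k+\al}$ is tiny, so it is the \emph{local} piece $\chi_1\om$ (where the Leibniz/interpolation estimate produces exactly the matching power $\la x\ra_\s^{-k-\al}$) that must be handled carefully, while the far piece only ever costs positive powers of $2^{-n}$ relative to its own scale and is therefore harmless. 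One must also be careful that the odd extension $F$ of $\om$ across $\{y_2=0\}$ preserves the relevant symmetry and Hölder/$L^q$ norms (up to constants), so that Lemma~\ref{lem:BSlaw0} applies; this is where the hypothesis that $\om$ is odd in $x$ enters, guaranteeing $v|_{x=0}=0$ and that the half-space Biot–Savart law is genuinely the restriction of the whole-space one. With these points settled, collecting the local and far estimates and taking the supremum over $n$ completes the proof.
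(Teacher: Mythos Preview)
Your overall strategy---dyadic localization at the scale $|x|\asymp 2^n$, splitting $\om$ into a piece supported near $x$ (handled by Calder\'on--Zygmund boundedness plus the Leibniz estimates \eqref{eq:interp_lei1}--\eqref{eq:interp_lei3}) and a piece supported near the origin (handled by smoothness and decay of the kernel)---is exactly the paper's approach, with the roles of your $\chi_1$ and $(1-\chi_1)$ swapped relative to the paper's $\chi_2,\chi_1$.

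There is, however, one genuine gap. You write $\|\na^k T(\chi_1\om)\|_{\dot C^\al}\lesssim\|\na^k(\chi_1\om)\|_{\dot C^\al}$, implicitly commuting all of $\na^k$ through the half-space operator $T=\pa_x^i\pa_y^j(-\D)^{-1}$. On $\R^2_+$ with Dirichlet boundary $\psi(x,0)=0$, the inverse Laplacian commutes with $\pa_x$ but \emph{not} with $\pa_y$: the odd extension in $y$ used to reduce to $\R^2$ does not commute with $\pa_y$, so $\pa_y(-\D)^{-1}\om\neq(-\D)^{-1}\pa_y\om$ in general, and the extended $\pa_y^k(\chi_1\om)$ need not lie in $\dot C^\al(\R^2)$ when the support of $\chi_1\om$ touches the boundary. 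The paper fixes this before decomposing: using $\pa_y^2\psi=-\pa_x^2\psi-\om$ repeatedly, every $\pa_x^i\pa_y^j\psi$ with $i+j=k+2$ is rewritten as a combination of terms with $j\le 1$ plus weighted derivatives of $\om$ itself; then one writes $\pa_x^i\pa_y^j(-\D)^{-1}\om=\pa_x^k R_{ij}\om$ with $R_{ij}$ a second-order operator and only $\pa_x^k$ passed through, which is legitimate. You need this reduction (or an equivalent device) before your near/far split.

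A smaller point: your closing remark that ``the hypothesis that $\om$ is odd in $x$ enters'' here is misplaced. Lemma~\ref{lem:BSlaw} carries no symmetry assumption on $\om$; the odd extension in the Biot--Savart formula \eqref{eq:BSlaw_form} is in the variable normal to the boundary and is part of the definition of $(-\D)^{-1}$ on $\R^2_+$, independent of any $x$-parity of $\om$. The $x$-oddness of $\om$ is used only later, in the energy estimates of Section~\ref{sec:EE}, to control $u_l/x_l$.
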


\begin{proof}

Below, the implicit constants can depend on $k,\al, \g$. We fix $k$ and denote
\beq\label{eq:BSlaw_EE}
|| \om ||_X \teq  || \om ||_{ X_{\s}^{k, \al}} +  || \om ||_{L^q} + || \om ||_{\dot C^{\al}} .
\eeq

Denote $\psi = (-\D)^{-1} \om$. Since $\uu = \na^{\perp} \psi$, we need to estimate $ \pa_x^i \pa_y^j \psi$ for  $i+j = k+2$. 
Note that the nonlocal operator $(-\D)^{-1}$ commutes with $\pa_x$ but does not commute with $\pa_y$. Using the relation $ \pa_y^2\psi = - \pa_x^2 \psi - \om$ and 
\[
||  \ \la x\ra_{\s}^{k+\al} \pa_x^i \pa_y^j \om  ||_{\dot C^{\al}} 
+ || \ \la x \ra_{\s}^{k} \pa_x^i \pa_y^j \om  ||_{L^{\infty}} \les || \om ||_{X_{\s}^{k, \al}},  \quad i+ j = k,
\]
we only need to perform weighted estimate $\pa_x^i \pa_y^j \psi$ for $i+j = k+2, j \leq 1$. 
Denote 
\[
 \pa_x^i \pa_y^j (-\D)^{-1} \om =\pa_x^k R_{ij} \om, \quad R_{ij} \teq \pa_x^{i-k} \pa_y^j (-\D)^{-1} .
\]


From the proof of Lemma \ref{lem:interp}, we only need to estimate 
\beq\label{eq:BSlaw_terms}
 I = || \la x \ra_{\s}^k g ||_{ L^{\infty}}, \quad 
 II = \f{ | \la x \ra_{\s}^{k+\al} g(x) -\la z \ra^{ k+ \al} g(z)|}{|x-z|^{\al}} , \quad |z| \in [|x|, 2|x|], \quad g =   \na^{k+1} \uu.
\eeq
For $II$, we can bound it by 
\beq\label{eq:BSlaw_terms1}
|II| \les \la x\ra_{\s}^k |g(x)| + \la x\ra_{\s}^{k+\al} \f{ |g(x) -g(z)|}{|x-z|^{\al}} .
\eeq


We fix $(x, z)$ with  $|x| \in [2^n, 2^{n+1}]$. We construct cutoff function $\chi$ with $\chi(s) = 1$ for $s \leq 1$ and $\chi = 0$ for $s \geq 2$. We assume $|x| \in [2^n, 2^{n+1}]$. Then we define 
\[
\chi_1(x) = \chi( \f{|x|}{2^{n-2}}), \quad \chi_2(x) = 1 - \chi( \f{|x|}{2^{n-2}}).
\]

Since $\pa_x^k$ commutes with $R_{ij}$, we decompose $\om $ and $\pa_x^k R_{ij} \om$ as follows 
\beq\label{eq:BSlaw_terms2}
\bal
\om  & = \om_1 + \om_2 , \quad \om_i = \om \chi_i, 
\quad 
\pa_x^k R_{ij} \om 
 = \pa_x^k R_{ij}  \om_1 +   R_{ij} \pa_x^k \om_2 .
\eal
\eeq

Note that $\om_1, \om_2$ are supported near $0$ and near $x$, respectively. We will pass the derivative $\pa_x^k$ to $\om_2$ and $\pa_x^k$ to the kernel in $ R_{ij}\om_1$. Let $ W_i$ be the odd extension of $\om_i$ from $\R^2_+$ to $\R^2$.

Firstly, for $k=0$, using Lemma \ref{lem:BSlaw0}, we get 
\beq\label{eq:BSlaw_est0}
 || \na \uu ||_{L^{\infty}} \les || \om ||_X .
\eeq


\vspace{0.1in}
\textbf{Estimate of $\om_2$.}
Using Lemma \ref{lem:BSlaw} with $R = 2^n$, we get 
\[
\bal
    || R_{ij} \pa_{x_1}^k \om_2 ||_{ \dot C^{\al}}  
   & \les || \pa_{x_1}^k \om_2 ||_{ \dot C^{\al}},  \\
  | R_{ij} \pa_{x_1}^k \om_2(x)| 
  & \les || \pa_{x_1}^k \om_2 ||_{\dot C^{\al}}  R^{\al}
  + \B| \int_{|x-y|_{\infty} \geq R} K_{ij}(x-y) (\pa_{x_1}^k W_2)(  y) d y \B| \teq I_1 + I_2.
  \eal
\]

For $I_2$ with $k \geq 1$, using integration by part once, we yield
\[
|I_2| \les 
|| \pa_{x_1}^{k-1} W_2||_{\infty} (  \int_{|x-y|_{\infty} = R} |x-y|^{-2} d y
+ \int_{|x-y|_{\infty} \geq R} |x-y|^{-3} dy ) 
\les R^{-1}|| \pa_{x_1}^{k-1} W_2||_{\infty}.
\]
Optimizing $R$, we obtain 
\[
  | R_{ij} \pa_{x_1}^k \om_2(x)|  \les 
  || \pa_{x_1}^k \om_2 ||_{\dot C^{\al}}^{1 / (1+\al)}
  || \pa_{x_1}^{k-1} \om_2 ||_{ L^{\infty} }^{\al /(1 + \al)}, \quad k \geq 1.
\]



For $y \in \supp(\chi_2)$, 
we have $|y| \gtr 2^n , |x| \asymp 2^n$, Using 
\eqref{eq:interp_lei3} and $|| \om||_{L^{\infty}} \les || \om ||_{X_{\s}^{k, \al}} $, we obtain 
\[
|| \pa_{x_1}^k (\om \chi_2) ||_{\dot C^{\al}}
\les \la x \ra_{\s}^{-(k+\al)} || \om ||_{X_{\s}^{k, \al}} , 
\quad || \pa_{x_1}^{k-1} (\om \chi_2) ||_{L^{\infty}} \les 
\la x \ra_{\s}^{-(k-1)} || \om ||_{X_{\s}^{k, \al}} .
\]



Denote $ f_2 =  R_{ij} \pa_{x_1}^k \om_2$. 
Since $|x| \asymp 2^n, |z| \in [|x|, 2 |x|]$, using the above estimates, we prove 
\beq\label{eq:BSlaw_est1}
\bal
  & \la x \ra_{\s}^k |f_2(x)| =  \la x\ra_{\s}^k | R_{ij} \pa_{x_1}^k \om_2| 
\les || \om ||_{X_{\s}^{k, \al}}
\la x \ra_{\s}^{k - \f{k+\al}{ 1+\al} - \f{\al  (k-1)}{ (1+\al)} }
\les  || \om ||_{X_{\s}^{k, \al}},\quad  k \geq 1, \\
& \la x \ra_{\s}^{k+\al} || f_2 ||_{\dot C^{\al}}
\les \la x \ra_{\s}^{k+\al} 
|| \pa_{x_1}^k (\om \chi_2) ||_{\dot C^{\al}}
\les  || \om ||_{X_{\s}^{k, \al}} , \quad k \geq 0.
\eal
\eeq


\vspace{0.1in}
\textbf{Estimate of $\om_1$.} Recall the decomposition \eqref{eq:BSlaw_terms2}. 
 Since $\om_1$ is supported in $\{ y: |y| \leq 2^{n-1} \}$, away from $x, z$, $|x| , |z| \in [2^n, 2^{n+2}]$, using the formulas \eqref{eq:BSlaw_form}, for $a=x, z$, we get 
\[
\pa_{x_1}^k R_{ij} \om_1( a ) 
= C_{ij} \int  \pa_{x_1}^k K_{ij}(a - y) W_1(y) d y.
\]

For $y \in \supp(\om_1)$, 
we get $ |y| \leq \f{ |x| }{2}, |y-x| \gtr  |x|$. For $k \geq 1$, using 
$\la x \ra_{\s} \leq |x|$ \eqref{eq:xwg}, we yield 
\beq\label{eq:BSlaw_est2}
\bal
& |\pa_{x_1}^k R_{ij} \om_1( x ) | 
 \les || W_1||_{L^{\infty}} \int_{ |y- x| \gtr |x| } |x-y|^{-k-2}  d y  \les || \om ||_{L^{\infty}}  |x|^{-  k} , \\
 & \la x \ra_{\s}^k |\pa_{x_1}^k R_{ij} \om_1( x ) |  
\les || \om ||_{L^{\infty}}   \la x \ra_{\s}^{  k} |x|^{-k}
\les || \om ||_{L^{\infty}}    , \ k \geq 1.
\eal
\eeq

For the H\"older estimate with $k \geq 0$, since $|y|\leq 2^{n-1},  |z| \geq |x|\geq 2^n \geq 2|y|$, 
we yield  $ |x-y| \asymp |x|, |z-y| \asymp |z|$. Using 
 $|\na^l K_{ij}(s)| \les_l |s|^{-l-2}$ and $ \la x \ra_{\s} \leq |x|$ \eqref{eq:xwg}, we obtain
\[
\bal
& |(\pa_{x_1}^k K_{ij})(x- y) - (\pa_{x_1}^k K_{ij})(z- y)| 
\les \min( |x|^{-k-2}, |x-z| |x|^{-k-3})
\les |x-z|^{\al} |x|^{-k-2-\al}, \quad  k \geq 0 \\
& \f{|\pa_{x_1}^k R_{ij} \om_1( x ) - 
\pa_{x_1}^k R_{ij} \om_1( z ) 
 | }{ |x-z|^{\al}}
 \les  \int_{|y| \les |x|} |x|^{-k-2-\al} d y
 || \om||_{L^{\infty}}
 \les  || \om||_{L^{\infty}} |x|^{-  (k+\al)} 
\les || \om||_{L^{\infty}} \la x \ra_{\s}^{-  (k+\al)}  ,
 \eal
 \]

Combining \eqref{eq:BSlaw_est0}-\eqref{eq:BSlaw_est2}, we prove the $L^{\infty}$ estimate in Lemma \ref{lem:BSlaw}. Combining \eqref{eq:BSlaw_est0},\eqref{eq:BSlaw_est1}, 
the above estimates, and using \eqref{eq:BSlaw_terms1}-\eqref{eq:BSlaw_terms2}, we 
prove the H\"older estimate in Lemma \ref{lem:BSlaw}.
\end{proof}

\subsection{Energy estimates}\label{sec:EE}

We are in a position to prove Proposition \ref{prop:BKM_bous}. Let $C(k,  t)$ be a constant depending on $ \int_0^t  || \na \th(s) ||_{\infty} ds$, the initial data,
the order of energy estimates $k$, and parameters $\al, q, \s$. It can vary from line to line. Using standard energy estimates, see e.g. \cite{chae1999local}, we have
\beq\label{eq:EE_linf1}
\sup_{ s \leq t}
|| \na \uu(s) ||_{C^{\al} }  +  || \om(s) ||_{ C^{\al} }  
+ || \th(s) ||_{C^{1, \al}} 
+ || \om(s) ||_{L^q} + || \na \th(s) ||_{L^q} 
+ || \uu(s)||_{L^2}  \leq C(0, t).
\eeq

Using embedding, we yield 
\beq\label{eq:EE_linf2}
|| \uu ||_{L^{\infty}} \les || \uu||_{L^2}  + || \na \uu ||_{L^{\infty}}.
\eeq

Next, we perform weighted estimate. 
Using \eqref{eq:bous}, we derive the equation for $\pa_l \th, l=1,2$
\beq\label{eq:bous_th}
   \pa_t  \pa_l \th  + \uu \cdot \pa_l \th 
   = - \pa_l \uu \cdot \na \th .
\eeq

We focus on the estimate of $\om$ \eqref{eq:bous}, and the estimate of $\na \th$ \eqref{eq:bous_th} is similar. Let $\al \in [0, 1)$ and $\g = 0$ or $\al$. Taking $D_{ij} \teq \la x \ra_{\s}^{i+j } \pa_x^i \pa_y^j$ for $i+j \leq k$ on \eqref{eq:bous} and multiplying $\la x \ra_{\s}^{\g}$, we get
\[
\bal
  \pa_t \la x \ra_{\s}^{\g} D_{ij} \om & + \uu \cdot \na ( \la x \ra^{\g}_{\s} D_{ij} \om ) = 
 \uu \cdot \na( \la x \ra_{\s}^{i+j + \g}) \pa_x^i \pa_y^j \om \\
& + \la x \ra_{\s}^{\g}  \B( D_{ij} \pa_x \th - \sum_{i_1 \leq i, j_1 \leq j, i_1 + j_1 \geq 1}
 C_{i_1 j_1  i j}
 D_{i_1 j_1} \uu \cdot D_{i-i_1, j-j_1} \na \om \B) \teq I_1 + I_2, \\
 \eal
\]
where $I_1$ comes from the commutator 
between advection $\uu \cdot \na$ and the weight $\la x \ra_{\s}^{i+j + \g}$, and  $C_{i_1j_1 ij}$ are some absolute constants. Recall the weighted norm \eqref{norm:Xk}. For $i_1 + j_1 \geq 1, i_1 \leq i, j_1\leq j$, using Lemma \ref{lem:prod}, its proof, and then Lemma \ref{lem:BSlaw}, we yield 
\[
\bal
|| \la x \ra_{\s}^{\g} D_{i_1 j_1} \uu \cdot D_{i-i_1, j-j_1} \na \om ||_{\dot C^{\g}}
& \les_{k, \al}  || \na \uu ||_{L^{\infty}} || \om ||_{X_{\s}^{i+j,\al}}
+ || \om ||_{L^{\infty}} || \na \uu ||_{ X_{\s}^{i+j, \al} } \\
& \leq  C(k, t) || \om ||_{X_{\s}^{i+j,\al}}
+ C(k, t) ( || \om ||_{X_{\s}^{i+j,\al}} + C(k, t) ),
\quad \g = 0, \al ,
\eal
\] 
where we treat $\dot C^{0} = L^{\infty}$ and have used $\g \leq \al$. Denote $\uu = (u_1, u_2)$. Due to symmetry in $x$ and the boundary condition, we have $u_i |_{x_i =0} = 0$. Using \eqref{eq:xwg} and a direct computation, we yield
\[
\bal
& I_1 =  (i+j+ \g) \f{ \uu \cdot  \na \la x \ra_{\s} }{ \la x \ra_{\s}} \la x \ra_{\s}^{\g} D_{ij}\om, \quad 
 \f{ u_l \cdot  \pa_l \la x \ra_{\s}}{ \la x \ra_{\s} }
 = \f{u_l}{x_l} \cdot  \f{ x_l \pa_l \la x \ra_{\s}} { \la x \ra_{\s} }
 =\f{u_l}{x_l} \cdot   \f{x_l^2}{|x|^2} ( \s - (\s-1) \f{|x|^{\s-1}}{ 1 + |x|^{\s-1}} ) ,  \\
 & |\f{u_1}{x_1}(x) - \f{u_1}{z_1}(z) | 
 = \B| \int_0^1 u_{1, x}( t x_1, x_2)  - u_{1, x}(tz_1, z_2) d t | 
 \les |x-z|^{\al} || \na \uu||_{C^{\al}}.
 \eal
\]
Similar H\"older estimates hold for $u_2 / x_2$. 
 Using Lemma \ref{lem:prod} and argument therein, we get 
\[
|| I_1 ||_{\dot C^{\g}} \les || \na \uu||_{C^{\al}} || \om ||_{X_{\s}^{i+j, \al}} 
\les C(k,t)   || \om ||_{X_{\s}^{i+j, \al}} .
\]

Using the characteristic along the flow, derivation for the H\"older estimate, see e.g. Lemma 2.5 \cite{ChenHou2023a}, and the above estimates,  
 we yield 
\[
\bal
 ||  \la x \ra_{\s}^{\g}  D_{ij} \om(T) ||_{ \dot C^{\g}}
 & \leq 
  ||  \la x \ra_{\s}^{\g}  D_{ij} \om_0 ||_{ \dot C^{\g}}
+ \int_0^T  C(k, t) ( || \na \th ||_{ X_{\s}^{i+j, \al}} 
 +   || \om ||_{X_{\s}^{i+j, \al}} + C(k, t) )  dt  ,\quad  \g = 0, \al \\
 \eal
\]

The energy estimates for $ ||\la x \ra_{\s}^{\g} D_{i j} \pa_l \th ||_{\dot C^{ \g}}$ 
are similar. Combining the estimates for $i+j\leq k$, we derive 
\[
 || \om(T) ||_{X_{\s}^{k, \al}} + ||  \na \th(T) ||_{X_{\s}^{k, \al}} ) 
 \leq C_k ( || \om_0 ||_{X_{\s}^{k, \al}} + ||  \na \th_0 ||_{X_{\s}^{k, \al}} ) 
    + \int_0^T C(k, t) ( || \om ||_{X_{\s}^{k, \al}} + || \na  \th ||_{X_{\s}^{k, \al}}  +   C(k, t) )  d t .
\]

Using Gronwall's inequality and Lemma \ref{lem:BSlaw}, we establish
\[
\sup_{s \leq t} ( || \om(s) ||_{X_{\s}^{k, \al}} + ||  \na \th(s) ||_{X_{\s}^{k, \al}} 
+ || \na \uu(s) ||_{X_{\s}^{k, \al}} ) \leq C(k, t).
\]

Since $k$ is arbitrary, combining the above estimate and \eqref{eq:EE_linf1},\eqref{eq:EE_linf2}, 
we conclude the proof.


\subsection{Continuation criterion for Euler equations }

The proof of Proposition \ref{prop:euler_R3} follows that of Proposition \ref{prop:BKM_bous}. 
The proof of Proposition \ref{prop:euler_bd} is similar to that of Proposition \ref{prop:BKM_bous}. We only sketch the difference. Denote 
\[
S(t) = \supp( \om(t) ) \cup \supp( u^{\th}(t)) .
\]

Since we impose the support constraint \eqref{eq:euler_supp} and 
the boundary condition $u^r(1, z) = 0$ \eqref{eq:euler2}, for $(r, z) \in S $, we get 
\beq\label{eq:euler_r}
 r, r^{-1} \in C^{\infty}, \quad \sup_{r, z \in S} | u^r / r |
\les \sup_{r, z \in S} | u^r |  \les \sup_{r, z \in S} | \pa_r u^r|. 
\eeq

Let $\chi$ be a smooth cutoff function with $\chi(s) = 1, s \leq \f{3}{4}, \chi(s) =0, s \geq \f{7}{8} $. Define 
\[
\chi_1(r, z) \teq \chi( |(r, z) - (1, 0)|), \quad \psi_1 \teq \chi_1 \td \psi.
\]

Recall $S_{max}$ from \eqref{eq:smax}. We have $\chi_1 = 1$ in $S_{max}$. 
In the energy estimates, we only need the bounds of $u^r,u^z, \om^{\th}, u^{\th}$ in $S(t)$. Denote $\vartheta = \arctan \f{x_2}{x_1}$. From \eqref{eq:euler2},  we have 
\[
 -\D_{3D} (\td \psi \sin \vartheta) = \om^{\th} \sin \vartheta, \quad |(x_1, x_2)| \leq 1, \quad  z \in \BT,
\]
with Dirichlet boundary condition. Using standard elliptic estimate, we yield 
\[
  || \td \psi \sin \vartheta ||_{C^{2,\al}(D)} \les_{\al} ||  \om^{\th} \sin \vartheta ||_{C^{\al}(D)}.
\]

Viewing $\td \psi, \om^{\theta}$ as functions of $r \leq 1, z \in \R$, from the above estimates, we obtain
\beq\label{eq:iter0}
 || \td \psi ||_{C^{2,\al}(D_1)} \les   || \td \psi \sin \vartheta ||_{C^{2,\al}(D)}
 \les || \om^{\th}||_{C^{\al}}, \quad D_1 \teq \{ |(r, z)-(1,0)|\leq \f{7}{8}\},
\eeq
and $ \supp(\chi_1) \subset D_1$. Multiplying \eqref{eq:euler2} by $\chi_1$, we obtain the equations of $\psi_1$
\beq\label{eq:elli1}
\bal
& - ( \pa_{rr}  + \pa_{zz} ) \psi_1 
= \om^{\theta}   + Z(\td \psi, \chi_1),  \\
& Z(\td \psi, \chi_1) \teq - \f{1}{r^2} \td \psi \chi_1 + \chi_1 \f{1}{r} \pa_r \td \psi 
- 2 \pa_r \td \psi  \pa_r \chi_1 
- \td \psi \pa_r^2 \chi_1 
- 2 \pa_z \td \psi \pa_z \chi_1 - \td \psi \pa_z^2 \chi_1 ,
\eal
\eeq
with boundary conditions $\psi_1(1, z) = 0$, where we have used $\om^{\theta} \chi_1 = \om^{\theta}$ due to \eqref{eq:euler_supp}. The function $\psi_1$ can be viewed as a solution in the full half plane. Thus, we can use the elliptic estimate in Lemmas \ref{lem:BSlaw0}, \ref{lem:BSlaw}. 

We introduce weighted spaces $ d_0(r, z) \teq |( r,z)-(1,0)|,$
\beq\label{norm:XkO}
\bal
 &   \quad d =  \f{d_0^{\s}}{1 + d_0^{\s-1}}, \quad 
 || f ||_{  \dot X^{k, \al} } 
 = || d^{k+\al}(r, z) \na^k f ||_{\dot C^{\al}} ,  \ || f ||_{X_{O,\s}^{k, \al}} 
\teq \sum_{ 0 \leq i \leq k} ( || f ||_{ \dot X_{O,\s}^{i, \al}} + || d^i(r,z) \na^i  f ||_{L^{\infty} } ),
\eal
 \eeq
 similar to \eqref{norm:Xk} and other spaces similarly. 
 Using \eqref{eq:iter0}, we get 
\[
|| Z||_{C^{1,\al}} \les || \om^{\theta}||_{C^{\al}}. 
\]
Applying Lemmas \ref{lem:BSlaw0}, \ref{lem:BSlaw}, we can establish the $X_{O, \s}^{1,\al}$ estimate
for $\na^2 \psi_1$. 
Recall $S_{max}$ from \eqref{eq:smax}.
By choosing the next cutoff function $\chi_2$ with 
\[
S_{max}
	 \subset \subset \{ (r, z) : \chi_2(r, z) = 1 \}, \quad 
	 \supp(\chi_2) \subset \subset \{ (r, z) : \chi_1(r, z) = 1 \},
\]
deriving the equations of $ \td \psi \chi_2$ similar to \eqref{eq:elli1} using \eqref{eq:euler2}, and applying Lemma \ref{lem:BSlaw}, we obtain the $X_{O,\s}^{2,\al}$ estimate for $\na^2 (\td \psi \chi_2)$. We note that since the domain is bounded, we have $|| d^{k+\al + l} f ||_{X_{O,\s}^{k, \al}} \les || d^{k+\al} f ||_{X_{O,\s}^{k,\al}}$. Repeating the above process, we obtain $X_{O,\s}^{k, \al}$ estimate for $\na^2  \td \psi$ in $S_{max}$ for any $k$. Since $u^{\th}$ is supported in $S_{max}$, using Lemma \ref{lem:prod}, we have 
\[ || u^{\th }||_{X_{O,\s}^{k,\al}} 
\les_k || \na u^{\th} ||_{X_{O,\s}^{k-1,\al}}, \quad  || \na ( (u^{\th})^2) ||_{X_{O,\s}^{k,\al}}
\les ( || \na u^{\th} ||_{L^{\infty}} + ||  u^{\th} ||_{L^{\infty}}   ) 
|| \na u^{\th} ||_{X_{O,\s}^{k,\al}}.
\]

Due to \eqref{eq:euler_supp}, $\om^{\th}, u^{\th}, \td \psi \one_{S_{\max}} $ can be seen as  functions in a half plane $ r \leq 1, z \in \R$. Since $r, r^{-1}$ are smooth \eqref{eq:euler_r}
in $S_{max}$, the remaining steps in the proof of Proposition \ref{prop:euler_bd} follows that of 
 Proposition \ref{prop:BKM_bous} and the product rule in Lemma \ref{lem:prod}.

\bibliographystyle{plain}
\bibliography{selfsimilar}

\end{document}